\pgfplotsset{compat=newest}
\newlength{\figurewidth}
\newlength{\figureheight}
\newcommand{\R}{\mathbb{R}}
\newcommand{\para}{\parallel}
\newcommand{\diag}{\operatorname{diag}}
\newcommand{\trace}{\operatorname{\textbf{tr}}}
\newcommand{\nn}{\nonumber}
\newcommand{\argmin}{\operatornamewithlimits{arg\ min}}
\newcommand{\argmax}{\operatornamewithlimits{arg\ max}}
\newtheorem{theorem}{Theorem} 
\newtheorem{definition}{Definition} 
\newtheorem{corollary}[theorem]{Corollary}
\newtheorem{lemma}[theorem]{Lemma}
\newtheorem{conjecture}{Conjecture}
\def\ie{{\em i.e.,~}}
\def\eg{{\em e.g.,~}}
\newcommand{\revise}[1]{{\color{black}{#1}}}
\newcommand{\new}[1]{{\color{black}{#1}}}
\newcommand{\newkeep}[1]{{\color{blue}{#1}}}
\newtheorem{assumption}{Assumption}
\begin{document}

\title{Convergence of a Grassmannian Gradient Descent Algorithm for Subspace Estimation From Undersampled Data \thanks{The work of both authors in this publication was supported by the U.S. Army Research Office under grant number W911NF1410634.}
}


\author{Dejiao Zhang and Laura Balzano \\
dejiao \& girasole@umich.edu \\
Department of Electrical Engineering and Computer Science \\
University of Michigan, Ann Arbor }

\date{}

\maketitle

\begin{abstract}
Subspace learning and matrix factorization techniques have many applications in science and engineering, and efficient algorithms are critical as dataset sizes continue to grow. Many relevant problem formulations are non-convex, and in a variety of contexts it has been observed that solving the non-convex problem directly is not only efficient but reliably accurate. We discuss convergence theory for a particular method: first order incremental gradient descent constrained to the Grassmannian. The output of the algorithm is an orthonormal basis for a $d$-dimensional subspace spanned by an input streaming data matrix.  We study two sampling cases: where each data vector of the streaming matrix is fully sampled, or where it is undersampled by a sampling matrix $A_t\in \mathbb{R}^{m\times n}$ with $m\ll n$. Our results cover two cases, where $A_t$ is Gaussian or a subset of rows of the identity matrix. We propose an adaptive stepsize scheme that depends only on the sampled data and algorithm outputs. We prove that with fully sampled data, the stepsize scheme maximizes the improvement of our convergence metric at each iteration, and this method converges from any random initialization to the true subspace, despite the non-convex formulation and orthogonality constraints. For the case of undersampled data, we establish monotonic expected improvement on the defined convergence metric for each iteration with high probability.

\newkeep{This technical report was updated in February 2022 to match Dejiao Zhang's PhD dissertation \cite{zhang2019extracting}, which corrected some errors. For the case with full observations (no compressed or missing data), the theoretical results herein have been superseded by several other results in the literature, including results for the GROUSE algorithm itself \cite{balzano2022equivalence}.}
\end{abstract}

\section{Introduction}
Low-rank matrix factorization is an essential tool for high-dimensional inference with fewer measurements than variables of interest, where low-dimensional models are necessary to perform accurate and stable inference. 
Many modern problems fit this paradigm, where signals are undersampled because of sensor failure, resource constraints, or privacy concerns. 
Suppose we wish to factorize a matrix $M=UW^T$ when we only get a small number of linear measurements of $M$. Solving for the subspace basis $U$ can be computationally burdensome in this undersampled problem and related regularized problems. Many algorithms that attempt to speed up computation are solving a non-convex optimization problem, and therefore come with few guarantees.

The Singular Value Decomposition (SVD) provides the solution to the non-convex matrix factorization problem formulation with full data, and there are several highly successful algorithms for solving it \cite{golub2012matrix}. Unfortunately, these algorithms cannot easily be extended to problems with incomplete observations of the matrix. Recently, several results have been published with first-of-their-kind guarantees for a variety of different gradient-type algorithms on non-convex matrix factorization problems \cite{armentano2014average,bhojanapalli2016dropping,chen2015fast,sa2015global,jain2016streaming,jain2013low,zheng2015convergent}. These new algorithms, being gradient-based, are well-suited to extensions of the SVD where the matrix is not fully sampled and where we include different cost functions or regularizers. For example, with gradient methods to solve the SVD we may be able to solve Robust PCA \cite{candes2011robust,he2012incremental,xu2010robust}, Sparse PCA \cite{d2008optimal}, or even $\ell_1$ PCA \cite{brooks2013pure} with gradient methods as well. However, almost none of these results gives guarantees in \emph{streaming problem}, where data can only be accessed one partial column vector at a time. This is a critical problem in the modern machine learning context with massive data and comparatively limited memory, or in applications where data are collected continuously and must be processed in realtime. The existing theoretical results for the streaming problem significantly overestimate the number of samples needed for convergence for typical algorithms.
 
Our contribution is to provide a global convergence result for $d$-dimensional subspace estimation using an incremental gradient algorithm performed on the Grassmannian, the space of all $d$-dimensional subspaces of $\R^n$, denoted by $\mathcal{G}(n,d)$. 
Subspace estimation is a special case of matrix factorization with orthogonality constraints, where we seek to estimate only the subspace spanned by the columns of the left matrix factor $U \in \R^{n\times d}$. 
\new{Our result demonstrates that, for fully sampled data without noise, this gradient algorithm \emph{converges globally to the global minimizer} almost surely, \ie it converges from any random initialization to the global minimizer. For undersampled data, including compressively sampled data and missing data, we provide results showing monotonic improvement in expectation on the metric of convergence for each iteration.} 


This paper is organized as follows. The problem formulation and the GROUSE algorithm are described in Section \ref{sec:formulation_and_alg}. The global convergence result for fully sampled data is presented in Section \ref{sec:global_full}, the convergence behavior of GROUSE with undersampled data is studied in Section \ref{sec:undersampled_data}, and the corresponding proofs are provided in Sections \ref{sec:appendix_a}, \ref{sec:proof_full_nfree} and \ref{sec:proof_of_undersampled_data}. Experiment results are in Section \ref{sec:experment}.

\section{Problem Setting}
\label{sec:formulation_and_alg}
In this paper, we consider the problem of learning a low dimensional subspace representation from streaming data. Specifically, we are given a sequence of observations $x_t = A_t v_t$ where $A_t \in \R^{m\times n}$ $(m\leq n)$ are sampling matrices that are given for each observation;  $v_t\in \R^{n}$ are drawn from a continuous distribution with support on the true subspace, spanned by $\bar U\in \R^{n\times d}$ with orthonormal columns, \ie $v_t = \bar U s_t, s_t \in \R^d$. In this paper, we study three different sampling frameworks: the fully sampled case with $A_t$ being the identity matrix, the compressively sampled case with $A_t \in \R^{m\times n}$ $(m\ll n) $ being random Gaussian matrices, and the missing data case where each row of $A_t$ $(m\ll n)$ is uniformly sampled from the identity matrix. 

We formulate subspace estimation as a non-convex optimization problem as follows. Let $U\in \mathbb{R}^{n\times d}$ be a matrix with orthonormal columns. Then we want to solve: 
\begin{align}
  \underset{U \in \R^{n\times d}}{\text{minimize}} \quad & \sum_{t = 1}^{T} \min_{w_t} \|A_t U w_t - x_t\|_2^2 \label{eq:obj}\\
   \text{subject to } & \quad  \text{span}\left(U\right)\in  \mathcal{G}(n,d)\nn
\end{align}
This problem is non-convex firstly because of the product of the two variables $U$ and $w_t$ and secondly because the optimization is over the Grassmannian $\mathcal{G}(n,d)$, the non-convex set of all $d$-dimensional subspaces in $\mathbb{R}^{n}$. We study an online algorithm to solve the above problem, where we process one observation at a time and perform a rank-one update to generate a sequence of estimates $U_t$ with the goal that $R(U_t)\rightarrow R(\bar U)$, where $R(\cdot)$ denotes the column range.

We can see the relationship between our problem and the well studied low-rank matrix recovery problem. Let $W\in \R^{d\times T}$ and $M = [v_1, \dots, v_T] \in \R^{n\times T}$, then (\ref{eq:obj}) is equivalent to 
\begin{align}
  \underset{U \in \R^{n\times d}, W\in \R^{d\times T}}{\text{minimize}} \quad & \|\mathcal{A}\left(U W\right) - \mathcal{A}\left(M\right)\|_2^2 \label{eq:objmatch}\\
  \quad \text{subject to } \quad  &\text{span}\left(U\right)\in  \mathcal{G}(n,d)\nn
\end{align}
where $\mathcal{A}: \R^{n\times T} \rightarrow \R^{mT}$ is a linear operator. Our algorithm can be thought of as an incremental algorithm to solve this problem as well. 
Fueled by the great deal of recent success of directly solving non-convex factorization problems (as we discuss in related work below), we study the natural incremental gradient descent algorithm \cite{bertsekas2011incremental} applied to (\ref{eq:obj}) directly. Since the optimization variable in our problem is a subspace, we constrain the gradient descent to the Grassmannian $\mathcal{G}(n,d)$. The resulting algorithm is called GROUSE (Grassmannian Rank-One Update Subspace Estimation) algorithm and is described in Algorithm \ref{alg:grouse}. This description differs from its initial introduction in \cite{balzano2010online} in that it extends the missing data case to a more general sampling framework.

\begin{algorithm}
\caption{GROUSE: Grassmannian Rank-One Update Subspace Estimation} \label{alg:grouse}
\begin{algorithmic}
\STATE{Given $U_0$, an $n \times d$ matrix with orthonormal columns, with $0<d<n$;}
\STATE{Set $t:= 0$;}
\REPEAT 
\STATE{Given sampling matrix $A_t: \R^n \rightarrow \R^m$ and observation $x_t = A_t v_t$;}
\STATE{Define $w_t := \arg \min_a \|A_tU_t a - x_t \|^2$;}
\STATE{Define $p_t := U_t w_t$ and $\widetilde r_t := x_t - A_tp_t$, $r_t := A_t^T\widetilde r_t$;}
\STATE{Using step size
\vspace{-3mm}
\begin{equation}
\theta_t =  \arctan\left(\frac{\|r_t\|}{\|p_t\|}\right)
\label{eq:theta}
\end{equation} 
update with a gradient step on the Grassmannian:}
\begin{equation} \label{eq:gpupdate}
U_{t+1} := U_t + \left(\frac{y_t}{\|y_t\|} - \frac{p_t}{\|p_t\|}\right)\frac{w_t^T}{\|w_t\|}
\end{equation} 

\vspace{-2mm}
\noindent where 
\vspace{-2mm}
$$\frac{y_t}{\|y_t\|_2} = \frac{p_t}{\|p_t\|_2}\cos (\theta_t)  + \frac{r_t}{\|r_t\|_2}\sin(\theta_t)$$
\vspace{-2mm}
\STATE{$t:=t+1$;}
\UNTIL{termination}
\end{algorithmic}
\end{algorithm}

\subsection{Algorithm}
\label{subsec:algorithm}
At each step, the GROUSE algorithm receives a vector $x_t  = A_t v_t$, and tries to minimize the inconsistency between $R(U)$ and the true subspace $R(\bar U)$ with respect to the information revealed in the sampled vector $x_t$, \ie
\begin{equation}
  \mathcal{F}\left(U; t\right) = \min_a\left\|A_tU a - x_t\right\|^2
  \label{eq:subsapce_error}
\end{equation}
In order to do so, GROUSE forms the gradient of $\mathcal{F}$ with respect to $U$ evaluated at the current estimate $U_t$, and takes a step in the direction of the negative gradient restricted to the Grassmannian. The derivation of the incremental gradient descent update rule on the Grassmannian is found in \cite{balzano2010online,balzano2012handling}, and we summarize it here. 

To compute the gradient of $\mathcal{F}$ on the Grassmannian, we first need to compute the derivative of $\mathcal{F}$ with respect to $U$ and evaluate it at $U_t$. As we will prove later, under mild conditions, $A_tU_t$ has full column rank with high probability. Therefore, the derivative is 
\begin{equation}
  \frac{d \mathcal{F}}{d U} = -2 A_t^T \widetilde r_t w_t^T
  \label{eq:subspace_gradient}
\end{equation}
where $\widetilde r := x_t - A_tU_t w_t$ denotes the residual vector with respect to the sampled vector $x_t$, and $w_t$ is the least-squares solution of (\ref{eq:subsapce_error}). 
Using Equation (2.70) in \cite{edelman1998geometry}, the gradient of $\mathcal{F}$ on the Grassmannian then follows as 
\begin{align}
  \nabla \mathcal{F} = \left(I - U_tU_t^T\right)\frac{d \mathcal{F}}{d U} &= -2\left(I - U_tU_t^T\right) A_t^T \widetilde r_t w_t^T \nn \\&= -2 A_t^T \widetilde r_t w_t^T \;.
  \label{eq:tangent_vector}
 \end{align} 
The final equality follows by $\widetilde{r_t} \perp A_t U_t$, which can be verified using the definitions of $w_t$ and $\widetilde r_t$. According to Eq (2.65) in \cite{edelman1998geometry}, a gradient step along the geodesic with tangent vector $-\nabla \mathcal{F}$ can be then formed as a function of the singular values and singular vectors of $\nabla \mathcal{F}$. For this specific case of our rank one $\nabla\mathcal{F}$ given in \eqref{eq:tangent_vector}, the update rule follows as 
\begin{equation}
  U(\eta) = U_t + \left[\left(\cos\left(\eta_t\sigma_t\right)-1\right)\frac{U_tw_t}{\|w_t\|} + \sin\left(\eta_t\sigma_t\right)\frac{A_t^T\widetilde r_t}{\|A_t^T\widetilde r_t\|}\right]\frac{w_t^T}{\|w_t\|}
  \label{eq:gpupdate_0}
\end{equation}
where  $\eta_t>0$ is the chosen step size at iteration $t$, $p_t := U_t w_t$ is the predicted value of the projection of the vector $v_t$ onto $R(U_t)$ and $\sigma_t=\|A_t^T\widetilde r_t\|\|p_t\|$. By leveraging the fact that $\widetilde r_t \perp A_t U_t$ and $p_t \in R(U_t)$, it's easy to verify that the rank-one update (\ref{eq:gpupdate_0}) maintains orthogonality $U(\eta)^T U(\eta)=\mathbb{I}_d$, and tilts $R(U_t)$ to a new point on Grassmannian.

In summary, for each observation the GROUSE algorithm works as follows:  it projects the data vector onto the current estimate of the true subspace with respect to the sampling matrix $A_t$, to get either the exact (when $A_t = \mathbb{I}_n$) or approximated projection $p_t$ and residual $r_t = A_t^T\widetilde r_t$. Then GROUSE updates the current estimate with a rank-one step as described by (\ref{eq:gpupdate}).  In the present work, we propose an adaptive stepsize framework that sets the stepsize only based on the sampled data and the algorithm outputs. More specifically, at each iteration a stepsize $\eta_t$ is chosen such that $\eta_t \sigma_t = \arctan\left(\frac{\|r_t\|}{\|p_t\|}\right)$.  As shown in Section \ref{sec:global_full}, the proposed stepsize scheme is greedy for the fully sampled data, \ie it maximizes the improvement of our defined convergence metric at each iteration. For the undersampled data, we establish a local convergence result by showing that, with the proposed stepsize, GROUSE moves the current estimated subspace towards the true subspace with high probability despite the nonconvex nature of the problem and undersampled data.

\subsection{Related Work}
\label{subsec:related_work}
Many recent results have shown theoretical support for directly solving non-convex matrix factorization problems with gradient or alternating minimization methods. Among the incremental methods \cite{sa2015global} is the one closest to ours, where the authors consider recovering a positive semidefinite matrix with undersampled data. They propose a step size scheme with which they prove global convergence results from a randomly generated initialization. However, their convergence results contain a obscure term, and their choice of step size depends on the knowledge of some parameters that are likely to be unknown in practical problems. Without this knowledge, the results only hold with sufficiently small step size that implies significantly slower convergence. 

\new{In contrast, while our work applies more narrowly to the subspace estimation problem, we provide an explicit expression for the expected improvement at each iteration, using a step size that only depends on the observations and outputs of the algorithms. Based on that, we prove that with fully sampled data, the proposed stepsize scheme maximizes the improvement of our convergence metric at each iteration, and GROUSE converges from any random initialization to the true subspace, despite the non-convex formulation and orthogonality constraint global convergence. We further posit a conjecture on the global convergence rate that better matches the practical observations for fully sampled data. Although we have not yet established a complete proof of this conjecture, we present our current approach in Appendix \ref{sec:proof_full_nfree}.} 

Other work that has looked at incremental methods has focused only on fully sampled vectors. For example, \cite{balsubramani2013fast} invokes a martingale-based argument to derive the global convergence rate of the proposed incremental PCA method to the single top eigenvector in the fully sampled case. In contrast, \cite{arora2013stochastic} estimates the best $d$-dimensional subspace in the fully sampled case and provides a global convergence result by relaxing the non-convex problem to a convex one. We seek to identify the $d$ dimensional subspace by solving the non-convex problem directly.

The results in this paper are very closely related to our previous work~\cite{balzano2014local}. In \cite{balzano2014local}, we prove that, within a local region of the true subspace, an expected improvement of their defined convergence metric for each iteration of GROUSE can be obtained. In contrast, we establish global convergence results to a global minimizer from any random initialization for fully sampled data, and extend the local convergence results to compressively sampled data. We also expand the local convergence results in \cite{balzano2014local} to a much less conservative region, and we provide a much simpler analysis framework that can be applied to different sampling strategies. Moreover, for each iteration of the GROUSE algorithm, the expected improvement on the convergence metric defined in \cite{balzano2014local} only holds locally in both theory and practice, while our theoretical result provides a tighter bound for the global convergence behavior of GROUSE over a variety of simulations. This suggests that our result has more promise to be extended to a global result for both missing data and compressively sampled data. 

Turning to batch methods, \cite{RH2012,jain2013low} provided the first theoretical guarantee for an alternating minimization algorithm for low-rank matrix recovery in the undersampled case. Under typical assumptions required for the matrix recovery problems \cite{recht2010guaranteed}, they established geometric convergence to the global optimal solution. Earlier work \cite{keshavan2010matrix,ngo2012scaled} considered the same undersampled problem formulation and established convergence guarantees for a steepest descent method (and a preconditioned version) on the full gradient, performed on the Grassmannian. {\cite{chen2015fast,bhojanapalli2016dropping,zheng2015convergent} considered low rank semidefinite matrix estimation problems, where they reparamterized the underlying matrix as $M = UU^T$, and update $U$ via a first order gradient descent method. However, all these results require batch processing and a decent initialization that is close enough to the optimal point, resulting in a heavy computational burden and precluding problems with streaming data.}
We study random initialization, and our algorithm has fast, computationally efficient updates that can be performed in an online context. 

Lastly, several convergence results for optimization on general Riemannian manifolds, including several special cases for the Grassmannian, can be found in \cite{absil2009optimization}. Most of the results are very general; they include global convergence rates to local optima for steepest descent, conjugate gradient, and trust region methods, to name a few. We instead focus on solving the problem in \eqref{eq:obj} and provide global convergence rates to the global minimum.

Before we present the main results, we first call out the following notation which we use throughout this chapter.  For notational convenience, we will drop the iteration subscript except our convergence metric $\zeta_t$ defined in Definition \ref{defn:detdiscrepancy} hereafter.

\paragraph{\textbf{Notation}}{We use $R(M)$ to denote the column space of a matrix $M$ and $\mathcal{P}_M$ to denote the orthogonal projection onto $R(M)$. $\mathbb{I}_n$ denotes the identity matrix in $\R^{n\times n}$ and $M_i$ denotes the $i^{th}$ row of matrix $M$. In this paper, without specification, $\|\cdot\|$ denotes the $\ell_2$ norm. $R(\bar{U}) $ and $R(U)$ denote the true subspace and our estimated subspace respectively, here both $\bar{U}$ and $U$ are matrices in $\R^{n\times d}$ with orthonormal columns. Also we use $v_{\parallel}$ and $v_\perp$ to denote the projection and residual of the underlying full vector $v\in \R^n$ onto the estimated subspace $R(U)$, \ie $v_\para = UU^T v, v_\perp = v - v_\para$. Note that these two quantities are in general unknown for the undersampled data case. We define them so as to relate the intermediate quantities, determined by the algorithm and sampled data, to the improvement on our defined convergence metric. }

\section{Preliminaries} 
\label{sec:convergence_analysis}
In this section, we first define our convergence metric and describe an assumption on the streaming data needed to establish our results.  Subsequently, we state a fundamental result that is essential to quantify the improvement on the convergence metric over GROUSE iterates. 
\begin{definition}
[Determinant similarity]
Our measure of similarity between $R(U)$ and $R(\bar U)$ is $\zeta \in [0,1]$, defined as
\vspace{-0.3cm}
\begin{equation}
\zeta := \det(\bar U^T U U^T \bar U)  = \prod_{k=1}^d \cos^2 \phi_{k}\;. \nn
\end{equation}
where $\phi_{k}$ denotes the $k^{\text{th}}$ principal angle between $R(\bar U)$ and $R(U)$, where $0 \leq \phi_1 \leq \cdots \leq \phi_d \leq \pi/2$ are defined by $\cos \phi_{k} = \sigma_{k}(\bar U^T U)$ with $\sigma_{k}$ denoting the $k^{th}$ singular value of $\bar U^TU$ (See \cite[Section 6.4.3]{golub2012matrix}).
\label{defn:detdiscrepancy}
\end{definition}

The convergence metric $\zeta$ increases to one when our estimate $R(U)$ converges to $R(\bar U)$, \ie all principal angles between the two subspaces equal zero. Compared to other convergence metrics defined either as $\|(I-\bar{U}\bar{U}^T)U\|_F^2 = d-\|\bar{U}^TU\|_F^2 = \sum_{k=1}^d \sin^2 \phi_{k}$ or $1-\|\bar{U}^TU\|_2^2 = \sin^2 \phi_{1}$, our convergence metric $\zeta$ measures the similarity instead of the discrepancy between $R(U)$ and $R(\bar U)$. In other words, $\zeta$ achieves its maximum value one when $R(U)$ converges to $R(\bar U)$, while the typical subspace distance is zero when the subspaces are equal. Also note that $\zeta=0$ iff at least one of the principal angles is a right angle. That is, all stationary points $U_{stat}$ of the full data problem  except the true subspace have $\det\left(\bar U^TU_{stat}U_{stat}^T\bar U\right) = 0$ \cite{yang1995projection,balzano2012handling}.
\revise{\begin{assumption}
For the underlying data $v = \bar U s$, we assume the entries of $s$ are independent, and identically distributed symmetrically about zero,  and each entry has zero-mean and unit variance. 
  \label{cond:v}
\end{assumption}}

Given this assumption, we have the following lemma which relates the projection $v_\para$ and the projection residual $v_\perp$ to the improvement on our convergence metric $\zeta_t$. As we will show in the following sections, this lemma is crucial for us to establish the expected improvement on our defined convergence metric $\zeta_t$ for all the sampling frameworks considered in this work. The proof is provided in Section \ref{sec:appendix_a}.
\begin{lemma} Let $v_\para$ and $v_\perp$ denote the projection and residual of the full data sample $v$ onto the current estimate $R(U)$. 
Then given Assumption \ref{cond:v}, for each iteration of GROUSE we have
\begin{equation}
  \mathbb{E}\left[\frac{\|v_\perp\|^2}{\|v_\para\|^2} \bigg\lvert U\right] \geq \mathbb{E}\left[\frac{\|v_\perp\|^2}{\|v\|^2}\bigg\lvert U\right] \geq \frac{1 - \zeta_t}{d} \;.
\end{equation} 
  \label{lem:key_quantity}
\end{lemma}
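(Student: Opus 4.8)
The plan is to reduce everything to a ratio of quadratic forms in the coordinate vector $s$, and then split the argument into an elementary pointwise step, one expectation computation, and a determinant--trace inequality. For the left inequality I would simply note that $\|v\|^2=\|v_\para\|^2+\|v_\perp\|^2\ge \|v_\para\|^2$, so that $\frac{\|v_\perp\|^2}{\|v_\para\|^2}\ge \frac{\|v_\perp\|^2}{\|v\|^2}$ holds pointwise for every realization of $v$ (with $\|v_\para\|>0$ almost surely, since $v$ is drawn from a continuous distribution); taking conditional expectation preserves the inequality. This disposes of the first inequality at once, so all the real work is in the second.

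For the second inequality I would pass to coordinates. Writing $v=\bar U s$ and using that $\bar U$ has orthonormal columns gives $\|v\|^2=\|s\|^2$ and $\|v_\perp\|^2=\|v\|^2-\|U^Tv\|^2=s^T(\mathbb{I}_d-P)s$, where $P:=\bar U^TUU^T\bar U$ is symmetric positive semidefinite with eigenvalues $\cos^2\phi_k$ and $\det P=\zeta_t$. Thus
$$\frac{\|v_\perp\|^2}{\|v\|^2}=\frac{s^T(\mathbb{I}_d-P)s}{s^Ts}.$$
Expanding the numerator as $\sum_{i,j}(\mathbb{I}_d-P)_{ij}s_is_j$ and dividing by $\|s\|^2$, the expectation reduces to evaluating $\mathbb{E}\!\left[\frac{s_is_j}{\|s\|^2}\,\middle|\,U\right]$. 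Using Assumption \ref{cond:v} (together with the coordinate symmetry discussed below), the off-diagonal terms vanish and the diagonal terms are all equal; since $\sum_i \frac{s_i^2}{\|s\|^2}=1$ identically, each diagonal term equals $\tfrac1d$, so $\mathbb{E}\!\left[\frac{s_is_j}{\|s\|^2}\mid U\right]=\frac{\delta_{ij}}{d}$. Consequently
$$\mathbb{E}\!\left[\frac{\|v_\perp\|^2}{\|v\|^2}\,\middle|\,U\right]=\frac{1}{d}\operatorname{tr}(\mathbb{I}_d-P)=\frac1d\sum_{k=1}^d\sin^2\phi_k.$$

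It then remains to show $\frac1d\sum_{k=1}^d\sin^2\phi_k\ge\frac{1-\zeta_t}{d}$, i.e. $\sum_{k=1}^d(1-\cos^2\phi_k)\ge 1-\prod_{k=1}^d\cos^2\phi_k$. Setting $\lambda_k=\cos^2\phi_k\in[0,1]$, this is the elementary Weierstrass-type inequality $\sum_k(1-\lambda_k)\ge 1-\prod_k\lambda_k$, which I would prove by a one-line induction whose step reduces to $(1-Q)(1-\lambda_d)\ge0$ with $Q=\prod_{k<d}\lambda_k$; both factors lie in $[0,1]$, so this is clear. Chaining the two displays finishes the bound.

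The main obstacle is the middle step, namely justifying $\mathbb{E}\big[\tfrac{s_is_j}{\|s\|^2}\mid U\big]=\tfrac{\delta_{ij}}{d}$. The subtlety is that the expectation of a ratio of quadratic forms is \emph{not} determined by second moments alone, so bare uncorrelatedness and unit variance do not by themselves control $\mathbb{E}\big[s_i^2/\|s\|^2\big]$ (these quantities depend on higher moments of $s$). The clean evaluation above really rests on the exchangeability and sign symmetry of the standardized coordinates, which force the diagonal terms to coincide and the cross terms to cancel; I would make this the load-bearing hypothesis and state it explicitly, since without some such symmetry the weighted average $\sum_k \frac{s_k^2}{\|s\|^2}\sin^2\phi_k$ can concentrate its weight on the near-zero principal angles and drive the left-hand side below $\frac{1-\zeta_t}{d}$. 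By contrast, the pointwise step and the determinant--trace inequality are entirely routine.
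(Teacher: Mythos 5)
Your proof is correct and follows essentially the same route as the paper's: both arguments reduce the second inequality to $\mathbb{E}\bigl[s^T(\mathbb{I}_d-P)s/(s^Ts)\,\big\lvert\, U\bigr]$ with $P=\bar U^TUU^T\bar U$, evaluate this expectation as $\frac{1}{d}\trace(\mathbb{I}_d-P)=\frac{1}{d}\sum_{k=1}^d\sin^2\phi_k$, and finish with the product inequality $\sum_k(1-\lambda_k)\ge 1-\prod_k\lambda_k$. The paper differs only in bookkeeping: it diagonalizes $P$ via the CS decomposition (Lemma \ref{lem:subsprelate}) and cites Lemma \ref{lem:expisovec} and Lemma \ref{lem:zeta_eps_relate} for the two steps you prove from scratch, and it leaves the pointwise first inequality implicit exactly as you argue it.

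The one substantive difference is your caveat about the expectation step, and you are right that it is load-bearing rather than pedantic; indeed it applies to the paper's own hypotheses. Lemma \ref{lem:expisovec} as stated (entries identically distributed, zero mean, uncorrelated) does not imply $\mathbb{E}[s_is_j/\|s\|^2]=\delta_{ij}/d$, since the expectation of a ratio is not determined by second moments. Concretely, for $d=2$ let $(\lvert s_1\rvert,\lvert s_2\rvert)$ be uniform on $\{(1,2),(2,4),(4,1)\}$ with independent uniform random signs, rescaled to unit variance: the entries are then identically distributed, zero-mean, and uncorrelated, yet
\begin{equation}
  \mathbb{E}\left[\frac{s_1^2}{\|s\|^2}\right]=\frac{1}{3}\left(\frac{1}{5}+\frac{1}{5}+\frac{16}{17}\right)=\frac{38}{85}<\frac{1}{2}\;, \nonumber
\end{equation}
and choosing $\phi_2=0$ with principal directions aligned to the coordinate axes gives $\mathbb{E}\left[\|v_\perp\|^2/\|v\|^2\,\big\lvert\, U\right]=\frac{38}{85}\sin^2\phi_1<\frac{1-\zeta_t}{2}$, so the conclusion of the lemma itself can fail under Assumption \ref{cond:v} alone. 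The exchangeability and coordinate-wise sign symmetry you impose (or any rotation-invariant law for $s$, e.g.\ Gaussian coefficients) is precisely what makes the diagonal terms equal and the cross terms vanish, and Assumption \ref{cond:v} should be strengthened, or read, accordingly.
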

\vspace{-0.2cm}
Although both projection ($v_\para$) and projection residual ($v_\perp$) are in general unknown for the undersampled data, we can relate the approximated projection residual $A^T\widetilde r$ to the true one $v_\perp$ by leveraging either random matrix theory or the incoherence property of the underlying subspace $R(\bar U)$. Therefore, the above lemma provides a unifying step to quantify the improvement on the convergence metric for all cases considered in the present work.

\section{Fully Sampled Data}
\label{sec:global_full} 
In this section, we consider fully sampled data, \ie $A = \mathbb{I}_n$. The corresponding proofs for these results can be found in Section \ref{sec:proof_full_nfree}. We start by deriving a greedy step size scheme for each iteration $t$ that maximizes the improvement on our convergence metric $\zeta_t$. For each update we prove the following:
\begin{equation}
  \frac{\zeta_{t + 1}}{\zeta_t} = \left(\cos\theta + \frac{\|v_{\perp}\|}{\|v_{\parallel}\|}\sin\theta\right)^2 .
  \label{eq:det_ratio_expr}
\end{equation} 
It then follows that 
\begin{equation}
  \theta^{\ast} = \argmax_{\theta} \frac{\zeta_{t + 1}}{\zeta_t} = \arctan \left(\frac{\|v_{\perp}\|}{\|v_{\parallel}\|}\right). 
  \label{eq:stepsize_greedy}
\end{equation}
This is equivalent to (\ref{eq:theta}) in the fully sampled setting $A_t = \mathbb{I}_n$. Using $\theta^\ast$, we obtain monotonic improvement on the determinant similarity that can be quantified by the following lemma. 
\begin{lemma}[Monotonicity for the fully sampled noiseless case]
For fully sampled data, choosing step size $\theta^\ast = \arctan \left(\frac{\|v_{\perp}\|}{\|v_{\parallel}\|}\right)$, after one iteration of GROUSE we obtain
\begin{equation}
    \frac{\zeta_{t + 1}}{\zeta_t} = 1 + \frac{\|v_{\perp}\|^2}{\|v_{\parallel}\|^2} \geq 1 \;.\nn
\end{equation}
  \label{lem:mono_full}
\end{lemma}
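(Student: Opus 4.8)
The plan is to obtain the lemma directly from the ratio identity \eqref{eq:det_ratio_expr}, which I treat as already established, by inserting the greedy step size $\theta^{\ast} = \arctan\!\left(\|v_\perp\|/\|v_\parallel\|\right)$. Writing $\rho := \|v_\perp\|/\|v_\parallel\|$, the relation $\tan\theta^{\ast} = \rho$ gives $\cos\theta^{\ast} = 1/\sqrt{1+\rho^2}$ and $\sin\theta^{\ast} = \rho/\sqrt{1+\rho^2}$, so that $\cos\theta^{\ast} + \rho\sin\theta^{\ast} = (1+\rho^2)/\sqrt{1+\rho^2} = \sqrt{1+\rho^2}$. Squaring and substituting into \eqref{eq:det_ratio_expr} yields $\zeta_{t+1}/\zeta_t = 1 + \rho^2 = 1 + \|v_\perp\|^2/\|v_\parallel\|^2$, and the bound $\geq 1$ is immediate since $\rho^2 \geq 0$. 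Thus the monotonicity statement is a one-line trigonometric computation once \eqref{eq:det_ratio_expr} is in hand; the maximality of $\theta^{\ast}$ asserted in \eqref{eq:stepsize_greedy} is not even needed, only its value.

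Consequently the real content lies in establishing \eqref{eq:det_ratio_expr} itself, which is where I would spend the effort. The starting point is that $\bar U^T U$ is $d\times d$, so the metric factors as $\zeta = \det(\bar U^T U U^T \bar U) = \det(\bar U^T U)^2$, and hence $\zeta_{t+1}/\zeta_t = \left(\det(\bar U^T U_{t+1})/\det(\bar U^T U_t)\right)^2$. Because the GROUSE update \eqref{eq:gpupdate} is a rank-one modification of $U_t$, the matrix $\bar U^T U_{t+1}$ is a rank-one update of $\bar U^T U_t$, so I would apply the matrix determinant lemma $\det(A + uv^T) = \det(A)\,(1 + v^T A^{-1}u)$ with $A = \bar U^T U_t$, $u = \bar U^T\!\left(\frac{y_t}{\|y_t\|} - \frac{p_t}{\|p_t\|}\right)$, and $v = w_t/\|w_t\|$. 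This reduces the determinant ratio to evaluating the scalar $1 + \frac{w_t^T}{\|w_t\|}(\bar U^T U_t)^{-1}\bar U^T\!\left(\frac{y_t}{\|y_t\|} - \frac{p_t}{\|p_t\|}\right)$. Note that this step requires $\bar U^T U_t$ to be invertible, i.e. $\zeta_t > 0$, which I would assume throughout (the excluded $\zeta_t = 0$ points are exactly the non-optimal stationary points noted after Definition \ref{defn:detdiscrepancy}).

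The main obstacle, and the heart of the argument, is the algebraic evaluation of that correction term. Here I would use the fully-sampled identities $p_t = U_t w_t$, $w_t = U_t^T v$, $\|p_t\| = \|w_t\| = \|v_\parallel\|$, and $r_t = \widetilde r_t = v_\perp$, together with $v = \bar U s$. Splitting $\frac{y_t}{\|y_t\|} - \frac{p_t}{\|p_t\|} = (\cos\theta - 1)\frac{p_t}{\|p_t\|} + \sin\theta\,\frac{r_t}{\|r_t\|}$, the $p_t$ direction collapses via $(\bar U^T U_t)^{-1}\bar U^T \frac{p_t}{\|p_t\|} = \frac{w_t}{\|w_t\|}$ to coefficient $1$. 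For the $r_t$ direction the key cancellation is $w_t^T(\bar U^T U_t)^{-1} = s^T$, using $w_t = U_t^T \bar U s$, whence $w_t^T(\bar U^T U_t)^{-1}\bar U^T v_\perp = s^T \bar U^T v_\perp = v^T v_\perp = \|v_\perp\|^2$; dividing by $\|w_t\| = \|v_\parallel\|$ and by the normalization $\|r_t\| = \|v_\perp\|$ shows the $r_t/\|r_t\|$ direction contributes $\|v_\perp\|/\|v_\parallel\|$. Assembling the pieces, the correction equals $1 + (\cos\theta - 1) + \frac{\|v_\perp\|}{\|v_\parallel\|}\sin\theta = \cos\theta + \frac{\|v_\perp\|}{\|v_\parallel\|}\sin\theta$, and squaring recovers \eqref{eq:det_ratio_expr}. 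I expect the bookkeeping with $(\bar U^T U_t)^{-1}$ and the substitution $v = \bar U s$ to be the only delicate part; the monotonicity claim then follows instantly from the first paragraph.
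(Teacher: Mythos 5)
Your proposal is correct and takes essentially the same route as the paper: the paper likewise first establishes \eqref{eq:det_ratio_expr} via the matrix determinant lemma (invoked there as the Schur complement) together with the identities $w^T(\bar U^T U)^{-1}\bar U^T v_{\parallel} = \|v_{\parallel}\|^2$ and $w^T(\bar U^T U)^{-1}\bar U^T v_{\perp} = \|v_{\perp}\|^2$ obtained from $w = U^T \bar U s$, and then substitutes $\cos\theta^\ast = \|v_{\parallel}\|/\|v\|$, $\sin\theta^\ast = \|v_{\perp}\|/\|v\|$ to conclude the lemma. Your bookkeeping, splitting the rank-one direction as $(\cos\theta - 1)\frac{p_t}{\|p_t\|} + \sin\theta\,\frac{r_t}{\|r_t\|}$ instead of cancelling the $p$-direction term against the constant $1$ as the paper does, differs only cosmetically, and your explicit remark that invertibility of $\bar U^T U_t$ (i.e.\ $\zeta_t > 0$) is needed is a point the paper leaves implicit.
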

\revise{To gain more insight into the improvement on $\zeta_t$ for each iteration of GROUSE, we call out the following lemma, which is a natural result of Lemma \ref{lem:key_quantity} and Lemma \ref{lem:mono_full}.}
\begin{lemma}[Expected improvement on $\zeta_t$] 
When fully sampled data satisfying Assumption \ref{cond:v} are input to the GROUSE (Algorithm \ref{alg:grouse}), 
the expected improvement after one update step is given as: 
  \begin{equation}
    \mathbb{E}\left[\zeta_{t + 1}\big\lvert U\right] \geq \left(1 + \frac{1 - \zeta_t}{d}\right)\zeta_t \;.\nn
  \end{equation}
  \label{lem:exp_mono_zeta} 
\end{lemma}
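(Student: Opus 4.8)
The plan is to obtain this lemma as an immediate consequence of the two results already in hand, namely the exact one-step identity of Lemma \ref{lem:mono_full} and the conditional lower bound of Lemma \ref{lem:key_quantity}. The only genuinely new ingredient is taking a conditional expectation, so the bulk of the difficulty has already been absorbed into those earlier statements. I would begin from the deterministic identity guaranteed by the greedy step size,
\begin{equation}
\zeta_{t+1} = \left(1 + \frac{\|v_\perp\|^2}{\|v_\parallel\|^2}\right)\zeta_t, \nn
\end{equation}
which holds pathwise for each realization of the incoming data vector, and then condition on the current iterate $U$ and take expectations over the fresh random sample $v = \bar U s$ governed by Assumption \ref{cond:v}.

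The key step is to recognize that, conditioned on $U$, the determinant similarity $\zeta_t = \det(\bar U^T U U^T \bar U)$ is a \emph{deterministic} quantity: it is a fixed function of $U$ and the (fixed) true basis $\bar U$, and all randomness at this iteration resides only in the new data vector $v_t$, hence in the derived quantities $v_\para$ and $v_\perp$. This justifies factoring $\zeta_t$ out of the conditional expectation,
\begin{equation}
\mathbb{E}\left[\zeta_{t+1}\big\lvert U\right] = \zeta_t\left(1 + \mathbb{E}\left[\frac{\|v_\perp\|^2}{\|v_\parallel\|^2}\bigg\lvert U\right]\right), \nn
\end{equation}
after which I would invoke Lemma \ref{lem:key_quantity} to bound the remaining conditional expectation below by $(1-\zeta_t)/d$. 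Substituting this bound yields exactly the claimed inequality $\mathbb{E}\left[\zeta_{t+1}\big\lvert U\right] \geq \left(1 + \frac{1-\zeta_t}{d}\right)\zeta_t$.

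The main obstacle, such as it is, is conceptual rather than computational: one must be careful that the expectation in Lemma \ref{lem:mono_full} is meaningful only because the greedy step size $\theta^\ast = \arctan(\|v_\perp\|/\|v_\parallel\|)$ is itself a function of the realized sample, so the per-step identity is valid sample-by-sample before averaging. I would also note that Lemma \ref{lem:key_quantity} supplies precisely the ratio $\|v_\perp\|^2/\|v_\parallel\|^2$ through the first inequality in its chain, so no additional manipulation (such as relating $\|v_\parallel\|$ to $\|v\|$) is required here. Since $\zeta_t \in [0,1]$, the multiplicative factor $1 + (1-\zeta_t)/d$ is at least one, so the bound also recovers the expected monotonicity and already hints at the geometric-type convergence rate to be exploited in the sequel.
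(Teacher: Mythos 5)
Your proposal is correct and follows essentially the same route as the paper's proof: both start from the pathwise identity $\zeta_{t+1} = (1 + \|v_\perp\|^2/\|v_\parallel\|^2)\,\zeta_t$ of Lemma \ref{lem:mono_full}, use that $\zeta_t$ is deterministic conditioned on $U$, and then apply Lemma \ref{lem:key_quantity} to lower bound the conditional expectation of $\|v_\perp\|^2/\|v_\parallel\|^2$ by $(1-\zeta_t)/d$. The only cosmetic difference is that the paper writes the argument in terms of the ratio $\mathbb{E}[\zeta_{t+1}/\zeta_t \mid U]$ and displays the intermediate bound via $\|v_\perp\|^2/\|v\|^2$, whereas you factor $\zeta_t$ out explicitly; the content is identical.
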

Under the mild assumption that each data vector is randomly sampled from the underlying subspace, we obtain strict improvement on $\zeta_t$ for each iteration provided $\|v_{\perp}\|>0$ and $\|v_{\parallel}\| >0$. Therefore, Lemma \ref{lem:mono_full} provides insight into how the GROUSE algorithm converges to the global minimum of a non-convex problem formulation: GROUSE is not attracted to stationary points that are not the global minimum. As we mentioned previously, all other stationary points $U_{stat}$ have $\det(\bar U^T U_{stat} U_{stat}^T \bar U) = 0$, because they have at least one direction orthogonal to $\bar U$~\cite{balzano2012handling}. 
Therefore, if the initial point $U_0$ has determinant similarity with $\bar{U}$ strictly greater than zero, then we are guaranteed to stay away from other stationary points, since GROUSE increases the determinant similarity monotonically, according to Lemma \ref{lem:mono_full}. \revise{This together with Lemma \ref{lem:exp_mono_zeta} yields the following convergence result of GROUSE.}

\revise{\begin{theorem}[Convergence of GROUSE] Initialize the starting point $U_0$ of GROUSE such that $\zeta_0>0$. 
Let $1 \geq \zeta^*\geq \zeta_0$ be the desired accuracy of our estimated subspace.  Then for any $\rho>0$, after 
\begin{align}
K &\geq \left(\frac{d}{\zeta_0} + 1\right)\log\left(\frac{1}{\rho(1-\zeta^\ast)}\right) \nn
\end{align} 
iterations of GROUSE Algorithm~\ref{alg:grouse},  
    \begin{equation}
 \mathbb{P}\left(\zeta_K \geq \zeta^\ast\right) \geq 1 - \rho\;. \nn
    \end{equation}.
\label{thm:asymp_full}
\end{theorem}
Notice that if we initialize GROUSE with $U_0$ drawn uniformly from the Grassmannian, \eg as the orthonormal basis of a random matrix $V\in R^{n\times d}$ with entries being independent standard Gaussian variables, this guarantees $\zeta_0 > 0$ with probability one. Therefore, Theorem \ref{thm:asymp_full} provides a global convergence result of GROUSE despite the non-convexity of our objective. However, with this randomly initialized $U_0$, the value of the associated determinant similarity $\zeta_0$ is $\mathcal{O}\left(\left(\frac{d}{n}\right)^d\right)$. Thereby, GROUSE requires $\mathcal{O}\left(d\left(\frac{n}{d}\right)^d\right)$ iterations to converge to the required precision, which is quite pessimistic compared to the actual number of iterations required by GROUSE in numerical simulations. To narrow this gap, we call out the following conjecture on the global convergence rate for GROUSE.}

\begin{conjecture}[Global Convergence of GROUSE]
 Let $1 \geq \zeta^*>0$ be the desired accuracy of our estimated subspace. With the initialization ($U_0$) of GROUSE as the range of an $n\times d$ matrix with entries being i.i.d standard normal random variables, then for any $\rho>0$, after 
\begin{align}
K &\geq K_1 + K_2 \nn \\
&=\left(\frac{2 d^2}{\rho} + 1\right)\tau_0 \log (n) + 2 d \log \left( \frac{1}{2\rho(1 - \zeta^{\ast})}\right) \nn
\end{align} 
iterations of GROUSE Algorithm~\ref{alg:grouse},  
    \begin{equation}
 \mathbb{P}\left(\zeta_K \geq \zeta^\ast\right) \geq 1 - 2\rho\;, \nn
    \end{equation}
    where $\tau_0 = 1 + \frac{\log \frac{(1 - \rho/2)}{C} + d\log (e/d)}{d\log n}$ with $C$ be a constant approximately equal to $1$. 
  \label{thm:global}
\end{conjecture}

\new{This conjecture matches what we see in experimental results. We present a related theorem with additional assumptions in Section \ref{sec:proof_full_nfree}.  We show that the iteration complexity can potentially be a combination of iterations required by two phases: $K_1 = \left(\frac{2 d^2}{\rho} + 1\right)\tau_0 \log (n)$ is the number of iterations required by GROUSE to achieve $\zeta_t \geq 1/2$ from a random initialization $U_0$; and $K_2 = 2 d \log \left( \frac{1}{2\rho(1 - \zeta^{\ast})}\right)$ is the number of additional iterations required by GROUSE to converge to the given accuracy $\zeta^\ast$ from $\zeta_{K_1} = 1/2$. }

We want to comment that conjecture \ref{thm:global} requires fully observed noiseless data, which is not very practical in many cases. However, \new{it would potentially be} the first convergence guarantee for the Grassmannian gradient descent based method for subspace estimation with streaming data. It is a very important initial step for further studies on more general cases, including undersampled data and noisy data with outliers. In the following section, we will analyze the convergence behavior of GROUSE for undersampled data. We leave the corrupted data case as future work.

\section{Undersampled Data} 
\label{sec:undersampled_data}
In this section, we consider undersampled data where each vector $v$ is subsampled by a sampling matrix $A\in \R^{m\times n}$ with the number of measurements being much smaller than the ambient dimension $(m \ll n)$. We study two typical cases, the compressively sampled data where $A$ are random Gaussian matrices, and the missing data where each row of $A$ is uniformly sampled from the identity matrix, $\mathbb{I}_n \in \R^{n\times n}$. 

We first outline several elementary facts that can help us understand how the GROUSE algorithm navigates on the Grassmannian with undersampled data. The proofs can be found in Section \ref{sec:proof_of_undersampled_data}.

Suppose $AU$ has full column rank, then the projection coefficients $w$ are found by the squares solution of $w =: \argmin_a \left\|AUa - x\right\|^2$, \ie $w = (U^TA^TAU)^{-1}U^TA^T x$.
Note that $x = A v$, therefore we can further decompose the projection coefficients $w$ as $w = w_\para + w_\perp$ where
  \begin{equation}
    w_\para = \left(U^TA^TAU\right)^{-1}U^TA^TA v_{\parallel}\;,  \qquad  w_\perp = \left(U^TA^TAU\right)^{-1}U^TA^TA v_{\perp} \;.
    \label{eqs:w_decompose}
  \end{equation}
This decomposition explicitly shows the perturbation induced by the undersampling framework, \ie $Av_\perp$ is not perpendicular to $AU$ in general, though $v_\perp$ is orthogonal to $R(U)$. Now we are going to 
use this perturbation to show how the approximated projection $p$ and residual $r$ deviate
from the exact ones obtained by projecting the full data sample $v$ onto the current estimate $R(U)$.  
\begin{lemma}
  Given Eq \eqref{eqs:w_decompose}, let $p = p_\para + p_\perp$ with $p_\para=Uw_\para$ and $p_\perp = Uw_\perp$, then
  \begin{align}
    p_\para =v_\para  \quad \text{and}\quad r = A^TAv_\perp - A^T \mathcal{P}_{AU}(Av_\perp) \;. 
  \end{align} 
  \label{lem:uniq_expr}
\end{lemma}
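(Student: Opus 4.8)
The plan is to establish the two equalities by direct substitution, exploiting two structural facts: that $v_\para$ already lies in $R(U)$, and that the matrix $AU(U^TA^TAU)^{-1}U^TA^T$ is precisely the orthogonal projector $\mathcal{P}_{AU}$ onto $R(AU)$. Both rely on $AU$ having full column rank (the standing hypothesis), which is what makes $U^TA^TAU$ invertible and $w$ well defined.

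First I would prove $p_\para = v_\para$. Since $v_\para = UU^Tv \in R(U)$, I can write $v_\para = Uz$ with $z = U^Tv$. Substituting this into the definition $p_\para = Uw_\para = U(U^TA^TAU)^{-1}U^TA^TA\,v_\para$ and replacing $v_\para$ by $Uz$, the factor $U^TA^TAU$ multiplies its own inverse, so the expression telescopes to $p_\para = Uz = v_\para$. This is exactly the point where the orthogonality of the splitting $v = v_\para + v_\perp$ enters: the in-subspace component is reconstructed exactly despite the sampling, so all of the distortion induced by $A$ is confined to the $\perp$ component.

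Next I would handle the residual. By definition $\widetilde r = x - Ap = Av - A(p_\para + p_\perp)$. Using the first part, $p_\para = v_\para$, together with $p_\perp = Uw_\perp$ and $v = v_\para + v_\perp$, the $v_\para$ terms cancel and I obtain $\widetilde r = Av_\perp - AUw_\perp$. The final step is to recognize $AUw_\perp$ as a projection: substituting $w_\perp = (U^TA^TAU)^{-1}U^TA^TAv_\perp$ from \eqref{eqs:w_decompose} gives $AUw_\perp = AU(U^TA^TAU)^{-1}U^TA^T(Av_\perp) = \mathcal{P}_{AU}(Av_\perp)$. Left-multiplying $\widetilde r = Av_\perp - \mathcal{P}_{AU}(Av_\perp)$ by $A^T$ and using $r = A^T\widetilde r$ yields the claimed $r = A^TAv_\perp - A^T\mathcal{P}_{AU}(Av_\perp)$.

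The computation is routine once the projector identity is in hand, so I do not anticipate a genuine obstacle; the only point demanding care is the invertibility of $U^TA^TAU$, \ie that $AU$ has full column rank, which simultaneously guarantees that $w$ is well defined and that $AU(U^TA^TAU)^{-1}U^TA^T$ coincides with $\mathcal{P}_{AU}$. The conceptual content worth stressing is the interpretation the two identities provide: $p_\para = v_\para$ shows the in-subspace part is recovered exactly, while $r$ is the image under $A^T$ of the portion of $Av_\perp$ orthogonal to $R(AU)$. This is precisely how the sampling operator entangles the out-of-subspace signal, and it sets up the later step of comparing the approximate residual $r$ to the true residual $v_\perp$.
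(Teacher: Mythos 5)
Your proposal is correct and follows essentially the same route as the paper's proof: both parts are direct verifications resting on the full column rank of $AU$, with the in-subspace component shown to be reproduced exactly and the residual identified as the $A^T$-image of the portion of $Av_\perp$ orthogonal to $R(AU)$. The only cosmetic difference is that you establish $p_\para = v_\para$ by telescoping the explicit formula $U(U^TA^TAU)^{-1}U^TA^TAUz = Uz$, whereas the paper argues via uniqueness of the solution to $AUw = Av_\para$; the two arguments are interchangeable.
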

\begin{proof}
   Let $a = U^T v_{\parallel}$, then $a$ is the unique solution to $U w = v_{\para}$ given that $U$ has full column rank. Since $AU$ also has full column rank, $b = \left(U^TA^TAU\right)^{-1}U^TA^TAv_{\para}$ is also the unique solution to $AU w = A v_\para$. It then follows that $AUa = A v_\para = AUb$. Therefore, $a = b$. 
   As for the second statement, it simply follows due to the fact that $Av_\para = AUw_\para\in R(AU)$. Hence $\widetilde r = \left(\mathbb{I}_m - \mathcal{P}_{AU}\right)Av = \left(\mathbb{I}_m - \mathcal{P}_{AU}\right)Av_\perp$, recall that $\mathcal{P}_{AU}$ denotes the orthogonal projection operator onto the column space of $AU$. This together with $r = A^T\widetilde r$ completes the proof.
 \end{proof} 
Below we lower bound the improvement on $\zeta_t$ as a function of the key quantities $r, \widetilde r$ and $p$. Compared to Lemma \ref{lem:mono_full}, Lemma \ref{lem:uniq_expr} and Lemma \ref{lem:det_incr_expr_undersample} highlight the how the perturbations induced by the undersampling framework influence the improvement on $\zeta_t$ for each iteration.
Being able to analyze and bound the quantities that include the perturbations is the key to establish the expected improvement on $\zeta_t$ for undersampled data.
\begin{lemma}
Suppose $AU$ has full column rank,  then for each iteration of GROUSE we have
\begin{equation}
  \frac{\zeta_{t + 1}}{\zeta_t} \geq 1 + \frac{2\left\|\widetilde r\right\|^2 - \|r\|^2}{\|p\|^2} + 2\frac{\Delta}{\|p\|^2} 
  \label{eq:det_ratio_undersample}
\end{equation}
where $\Delta  = w_\perp^T\left(\bar U^TU\right)^{-1}\bar U^T r$ with $ w_\perp = \left(U^TA^TAU\right)^{-1}U^TA^TA v_{\perp}$.
  \label{lem:det_incr_expr_undersample}
\end{lemma}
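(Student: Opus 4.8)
I plan to compute the determinant ratio $\zeta_{t+1}/\zeta_t$ \emph{exactly} via the matrix determinant lemma, exactly as in the fully sampled derivation leading to \eqref{eq:det_ratio_expr}, and then lower bound the resulting expression after substituting the chosen step size. Writing $\hat p = p/\|p\|$, $\hat r = r/\|r\|$, $\hat w = w/\|w\|$ and using $p = Uw$ together with $\|p\| = \|w\|$ (since $U$ has orthonormal columns), I would first rewrite the update \eqref{eq:gpupdate} as
\begin{equation}
U_{t+1} = U\left(\mathbb{I}_d - (1-\cos\theta)\hat w\hat w^T\right) + \sin\theta\,\hat r\hat w^T \;. \nn
\end{equation}
Multiplying on the left by $\bar U^T$ and abbreviating $G := \bar U^T U$ (invertible because $\zeta_t>0$), this exhibits $\bar U^T U_{t+1} = G + a\hat w^T$ as a rank-one perturbation of $G$ with $a = -(1-\cos\theta)G\hat w + \sin\theta\,\bar U^T\hat r$. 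The matrix determinant lemma, together with $\hat w^T\hat w = 1$, then gives $\det(\bar U^T U_{t+1}) = \det(G)\left(\cos\theta + \beta\sin\theta\right)$ with $\beta := \hat w^T G^{-1}\bar U^T\hat r$. Since $\zeta = \det(\bar U^T U)^2$, squaring yields the exact identity $\zeta_{t+1}/\zeta_t = (\cos\theta + \beta\sin\theta)^2$, the undersampled analogue of \eqref{eq:det_ratio_expr}.

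Next I would substitute $\theta = \arctan(\|r\|/\|p\|)$, so $\cos\theta = \|p\|/\sqrt{\|p\|^2+\|r\|^2}$ and $\sin\theta = \|r\|/\sqrt{\|p\|^2+\|r\|^2}$. Setting $\gamma := w^T G^{-1}\bar U^T r = \beta\|p\|\|r\|$, a short calculation collapses the ratio to
\begin{equation}
\frac{\zeta_{t+1}}{\zeta_t} = \frac{\left(\|p\|^2 + \gamma\right)^2}{\|p\|^2\left(\|p\|^2 + \|r\|^2\right)} \;. \nn
\end{equation}
The heart of the argument is identifying $\gamma$. Decomposing $w = w_\para + w_\perp$ and recalling from Lemma \ref{lem:uniq_expr} that $p_\para = U w_\para = v_\para$, hence $w_\para = U^T v$, I would compute $w_\para^T G^{-1}\bar U^T r = s^T\bar U^T r = v^T r$ (with $v = \bar U s$, the factor $G = \bar U^T U$ cancelling $G^{-1}$). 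Because $\widetilde r = (\mathbb{I}_m - \mathcal{P}_{AU})x$ is the residual of $x = Av$ onto $R(AU)$, idempotency gives $v^T r = v^T A^T\widetilde r = x^T(\mathbb{I}_m - \mathcal{P}_{AU})x = \|\widetilde r\|^2$. The remaining piece is exactly $w_\perp^T G^{-1}\bar U^T r = \Delta$, so $\gamma = \|\widetilde r\|^2 + \Delta$.

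Finally I would clear denominators in the target inequality. Writing $P = \|p\|^2$, $R = \|r\|^2$, $S = \|\widetilde r\|^2$, the claim is equivalent (after multiplying through by $P(P+R)>0$) to $(P + S + \Delta)^2 \geq (P + 2S - R + 2\Delta)(P + R)$, and expanding both sides leaves the difference $(S + \Delta - R)^2 = (\|\widetilde r\|^2 + \Delta - \|r\|^2)^2 \geq 0$, which proves the bound. As a sanity check, the fully sampled case has $w_\perp = 0$ (so $\Delta = 0$) and $\widetilde r = r = v_\perp$, recovering Lemma \ref{lem:mono_full} with equality. I expect the main obstacle to be the bookkeeping in the middle step rather than any single hard inequality: correctly tracking how the undersampling perturbation enters through $\beta$ and $w_\perp$, and in particular establishing $\gamma = \|\widetilde r\|^2 + \Delta$, which cleanly separates the unperturbed contribution $\|\widetilde r\|^2$ from the perturbation $\Delta$. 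Once $\gamma$ is pinned down, recognizing that the slack in the inequality is a perfect square makes the lower bound immediate.
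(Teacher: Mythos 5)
Your proof is correct and follows essentially the same route as the paper's: both apply the matrix determinant lemma to the rank-one update, use Lemma \ref{lem:uniq_expr} to identify $w^T(\bar U^TU)^{-1}\bar U^T r = \|\widetilde r\|^2 + \Delta$, and drop a nonnegative square to get the bound (your perfect-square slack $(\|\widetilde r\|^2 + \Delta - \|r\|^2)^2$ is exactly the $d^2$ term the paper discards via $(c+d)^2 \geq c^2 + 2cd$). The only differences are presentational: you first derive the exact ratio $(\cos\theta + \beta\sin\theta)^2$ and then substitute the step size, whereas the paper substitutes $y/\|y\| = (p+r)/\|p+r\|$ from the outset.
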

\new{The above lemma highlights the main hurdle in establishing global convergence for undersampled data. As is indicated by (\ref{eq:det_ratio_undersample}), there is no guarantee on monotonicity of the improvement on $\zeta_t$. Indeed, the uncertainty and perturbations introduced by the undersampling framework can even prevent us from establishing monotonically expected improvement on $\zeta_t$. However, we are still able to bound the key quantities in Lemma \ref{lem:det_incr_expr_undersample} and provide more insights on the convergence behavior of GROUSE for both compressively sampled data and missing data.}


\subsection{Compressively Sampled Data} 
\label{sec:compressively_sampled_data}
This section presents convergence results for compressively sampled data. We use an approach that merges linear algebra with random matrix theory to establish an expected rate of improvement on the determinant similarity $\zeta_t$ at each iteration. We show that, under mild conditions, the determinant similarity increases in expectation with a rate similar to that of the fully sampled case, roughly scaled by $\frac{m}{n}$. Detailed proofs for this section are provided in Section \ref{sec:proof_of_undersampled_data}.
\begin{theorem}
  Suppose each sampling matrix $A$ has i.i.d Gaussian entries distributed as $\mathcal{N}(0, 1/n)$. Let $\delta>0$ and let $\phi_d$ denote the largest principal angle between $R(U)$ and $R(\bar U)$. Then with probability exceeding $1 - \exp\left(-\frac{d\delta^2}{8}\right) - \exp\left(-\frac{m\delta^2}{32} + d\log\left(\frac{24}{\delta}\right)\right) - (4d + 2)\exp\left(-\frac{m\delta^2}{8}\right)$ we obtain
\vspace{-0.3cm}
    \begin{equation}
    \mathbb{E}_{v}\left[\zeta_{t + 1} \big\lvert U\right] \geq \left(1 + \gamma_1\left(1 - \gamma_2\frac{d}{m}\right)\frac{m}{n}\frac{1 - \zeta_t}{d}\right)\zeta_t \;, \nn 
  \end{equation}
  where $\gamma_1 = \frac{(1 - \delta)\left(1 - 2\delta\sqrt{\frac{m}{n}}\right)}{\left(1 + \sqrt{\frac{1 + \delta}{1 - \delta}\frac{d}{m}}\right)^2}$ and $\gamma_2  = \left(1 + \frac{2\tan(\phi_d) + \delta\frac{d}{\cos(\phi_d)}}{\left(1 - 2\delta\sqrt{\frac{m}{n}}\right)\sqrt{(1 + \delta)d/m}}\right)\frac{1 + \delta}{1 - \delta}$. Now let $\beta =\frac{8(1+\delta)}{(1-\delta)^2\left(1 - 2\delta\right)^2}$, further suppose 
  {\begin{align}
      m \geq d \cdot \max\left\{\frac{32}{\delta^2}\log\left(\frac{24 n^{2/d}}{\delta}\right), \beta \left(\tan \phi_d + \delta\cos\phi_d d\right)\left(\tan \phi_d + \delta\cos\phi_d d + \frac{1}{2} \right) \right\} \;, \nn
    \end{align}}
  then with probability at least $1 - 2/n^2 - \exp\left(-d\delta^2/8\right)$ we have
  \begin{equation}
    \mathbb{E}_{v}\left[\zeta_{t+1}\big\lvert U\right] \geq \left(1 + \frac{1}{2\gamma_1}\frac{m}{n}\frac{1 - \zeta_t}{d}\right)\zeta_t \;. \nn 
  \end{equation}
  \label{thm:det_convg_csrate}
\end{theorem}
This theorem implies that, for each iteration of GROUSE, expected improvement on $\zeta_t$ can be obtained with high probability as long as the number of samples is enough. As shown in Theorem \ref{thm:det_convg_csrate}, our theory for GROUSE requires more measurements when $R(U)$ is far away from $R(\bar U)$, in which case $\cos\phi_d =: \varepsilon$ is very small. In the high dimensional setting where $m\ll n$, compared to the fully sampled data case, the expected improvement on $\zeta_t$ is approximately scaled down by $\frac{m}{n}$. 
As we will show, this scaling factor is mainly determined by the relative amount of effective information stored in the approximated projection residual. On the other hand, due to the perturbation and uncertainty induced by the compressed sampling framework, the improvement on the determinant similarity given by the lower bound in Lemma \ref{lem:det_incr_expr_undersample} is neither monotonic nor global. As mentioned before, this is the main hurdle to pass before we can provide a global convergence result for undersampled data. However, despite of these difficulties, we are still able to establish Theorem \ref{thm:det_convg_csrate} which shows that, with reasonable number of measurements, the expected improvement on the convergence metric is monotonic with high probability as long as our estimate $R(U)$ is not too far away from the true subspace $R(\bar U)$. 

To prove Theorem \ref{thm:det_convg_csrate}, we provide the following intermediate results to quantify the key quantities in Lemma \ref{lem:det_incr_expr_undersample} with high probability, where probability is taken with respect to the random Gaussian sampling matrix $A$.  

\begin{lemma} 
Under the same conditions as Theorem \ref{thm:det_convg_csrate}, with probability at least $1 - \exp\left(-\frac{m\delta_2^2}{2}\right) - \exp\left(-\frac{m\delta_1^2}{8}\right) - \exp\left(-\frac{d\delta_1^2}{8}\right) $ we obtain
  \begin{align}
   \|\widetilde r\|_2^2 &\geq (1 - \delta_1)\left(1 - \beta\frac{d}{m}\right)\frac{m}{n}\|v_\perp\|_2^2 \label{eq:cs_resid}\\
   2\|\widetilde r\|_2^2 - \|r\|_2^2 
    &\geq (1 - \delta_1)\left(1 - 2\delta_2\sqrt{\frac{m}{n}}\right)\left(1 - \beta\frac{d}{m}\right)\frac{m}{n}\|v_\perp\|_2^2 \label{eq:cs_resid_diff}
  \end{align}
where $\delta_1, \delta_2 \in (0, 1)$, and $\beta = \frac{1 + \delta_1}{1 - \delta_1}$. 
\label{lem:rconc_cs}
\end{lemma}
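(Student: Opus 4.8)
The plan is to reduce every quantity to independent Gaussian chi-square variables, exploiting the structural fact that $U^{T}v_\perp = 0$. Decompose $\R^{n} = R(U)\oplus \sspan(v_\perp)\oplus W$ into mutually orthogonal pieces; since $A$ has i.i.d.\ $\mathcal{N}(0,1/n)$ entries, the three blocks $AU$, $Av_\perp$, and the restriction of $A$ to $W$ are mutually independent Gaussian objects, each with $\mathcal{N}(0,1/n)$ entries in an orthonormal basis of the relevant subspace. This independence is the engine of the whole argument; in particular it lets me treat $Av_\perp$ as $\mathcal{N}(0,\tfrac{\|v_\perp\|^{2}}{n}\mathbb{I}_m)$ \emph{independent} of the projector $\mathcal{P}_{AU}$.

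For \eqref{eq:cs_resid} I use Lemma \ref{lem:uniq_expr} to write $\widetilde r = (\mathbb{I}_m - \mathcal{P}_{AU})Av_\perp$, so that $\|\widetilde r\|^{2} = \|Av_\perp\|^{2} - \|\mathcal{P}_{AU}Av_\perp\|^{2}$. The first term is $\tfrac{\|v_\perp\|^2}{n}\chi^2_m$, hence at least $(1-\delta_1)\tfrac{m}{n}\|v_\perp\|^2$ with probability $\geq 1 - e^{-m\delta_1^2/8}$ by the lower chi-square tail. For the leakage term I condition on $AU$: with $B = U^{T}A^{T}AU$ and $h = (AU)^{T}Av_\perp$, the leakage equals $h^{T}B^{-1}h$, and since $h\mid AU \sim \mathcal{N}(0,\tfrac{\|v_\perp\|^2}{n}B)$, the whitening $h = \tfrac{\|v_\perp\|}{\sqrt n}B^{1/2}z$ collapses it to $\tfrac{\|v_\perp\|^2}{n}\|z\|^2$ with $z\sim\mathcal{N}(0,\mathbb{I}_d)$. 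Thus the leakage is exactly $\tfrac{\|v_\perp\|^2}{n}\chi^2_d$, at most $(1+\delta_1)\tfrac{d}{n}\|v_\perp\|^2$ with probability $\geq 1 - e^{-d\delta_1^2/8}$; subtracting yields \eqref{eq:cs_resid} with $\beta = \tfrac{1+\delta_1}{1-\delta_1}$.

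For \eqref{eq:cs_resid_diff}, since $r = A^{T}\widetilde r$ I rewrite $2\|\widetilde r\|^2 - \|r\|^2 = \widetilde r^{T}(2\mathbb{I}_m - AA^{T})\widetilde r$ and aim to show $\widetilde r^{T}AA^{T}\widetilde r \leq (1+2\delta_2\sqrt{m/n})\|\widetilde r\|^2$, which combined with \eqref{eq:cs_resid} gives the claim. Expanding $AA^{T}$ in the independent blocks and using $\widetilde r\perp R(AU)$ to kill the $AU$ block leaves $\|r\|^2 = \|v_\perp\|^2\big(\|Pg\|^{4} + \|A_W^{T}Pg\|^{2}\big)$, where $P = \mathbb{I}_m - \mathcal{P}_{AU}$, $g = Av_\perp/\|v_\perp\|$, and $A_W$ is the $W$-block. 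Dividing by $\|\widetilde r\|^2 = \|v_\perp\|^2\|Pg\|^2$ gives $\|r\|^2/\|\widetilde r\|^2 = \|Pg\|^2 + \|A_W^{T}Pg\|^2/\|Pg\|^2$. Conditioning on $(AU,g)$, the first term obeys $\|Pg\|^2 \le \|g\|^2 \approx m/n$ (lower order relative to $\sqrt{m/n}$), while the second is a chi-square in dimension $\dim W \approx n$ that concentrates around $1 - O(d/n)$ with fluctuations of order $\sqrt{m/n}$; the $e^{-m\delta_2^2/2}$ factor is precisely the upper-tail bound for this chi-square at deviation scale $\sqrt{m/n}$. A union bound over the three events produces the stated probability.

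The main obstacle is the second inequality. The tempting route is $\widetilde r^{T}AA^{T}\widetilde r \leq \sigma_{\max}(A)^2\|\widetilde r\|^2$, but $\sigma_{\max}(A)^2 \approx (1+\sqrt{m/n})^2$ only delivers the factor $1 - 2\sqrt{m/n}$ (i.e.\ $\delta_2 = 1$) and is far too lossy to feed the sharper $\gamma_1$ of Theorem \ref{thm:det_convg_csrate}. The point I must exploit is that $\widetilde r$ is a \emph{generic} vector inside $R(AU)^\perp$ rather than the leading singular direction of $A$. Because $\widetilde r$ depends on $A$, I cannot treat it as fixed and net over the sphere; the conditional-independence decomposition, which reduces the quartic-in-$A$ form to chi-square concentration with the correct $\sqrt{m/n}$ fluctuation scale, is exactly what makes $\delta_2 < 1$ attainable.
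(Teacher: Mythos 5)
Your proof of \eqref{eq:cs_resid} is essentially the paper's proof: the whitening step (conditional on $AU$, the leakage $h^T B^{-1} h$ collapses to $\tfrac{\|v_\perp\|^2}{n}\chi^2_d$) is exactly what the paper's Lemma \ref{lem:cs_randSub_proj} establishes via rotation invariance together with the independence of $AU$ and $Av_\perp$ (its Lemma \ref{lem:gauss_induced_indep}), and the subtraction yielding $\beta = \tfrac{1+\delta_1}{1-\delta_1}$ is identical. For \eqref{eq:cs_resid_diff}, however, you take a genuinely different route, and the difference is substantive: the paper's proof is precisely the ``tempting route'' you reject. It writes $2\|\widetilde r\|^2 - \|r\|^2 \geq \left(2 - \sigma_{\max}^2(A^T)\right)\|\widetilde r\|^2$ and invokes Lemma \ref{thm:singular_val}. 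Since $\sigma_{\max}(A^T)$ concentrates at $1+\sqrt{m/n}$, that argument can only deliver $2 - \sigma_{\max}^2(A^T) \approx 1 - 2\sqrt{m/n} - m/n$, i.e.\ it justifies the factor $1 - 2\delta_2\sqrt{m/n}$ only with $\delta_2 \geq 1$, not for arbitrary $\delta_2 \in (0,1)$; your objection is, in effect, a correct diagnosis of a gap in the paper's own argument. Your conditional block decomposition — the identity $\|r\|^2/\|\widetilde r\|^2 = \|Pg\|^2 + \|A_W^T Pg\|^2/\|Pg\|^2$ (the $AU$-component of $A^T P g$ vanishes and the $v_\perp$-component equals $\|Pg\|^2$), with the second term reduced to $\chi^2_{n-d-1}/n$ by independence of the $W$-block from $Pg$ — is correct, and it is the argument that actually locates the randomness at scale $\delta_2\sqrt{m/n}$ with failure probability $e^{-\Theta(m\delta_2^2)}$. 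What each approach buys: the paper's is one line but does not support its stated constant; yours is longer but sharp, and it even reveals that the true loss is a drift of order $m/n$ plus $\sqrt{m/n}$-scale fluctuations, which is stronger than the lemma claims in the regime $m \ll n$.

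Two bookkeeping points you should record when fleshing this out. First, absorbing the drift term $\|Pg\|^2 \leq (1+\delta_1)\tfrac{m}{n}$ into the budget $2\delta_2\sqrt{m/n}$ requires roughly $(1+\delta_1)\sqrt{m/n} \lesssim \delta_2$. This is not a defect of your approach but a condition the lemma genuinely needs: for, say, $m/n = 1/2$, $\delta_2 = 0.1$ and small $d$, the stated bound is false, since $\|r\|^2/\|\widetilde r\|^2 \approx 1 + m/n$ makes $2\|\widetilde r\|^2 - \|r\|^2 \approx \tfrac{1}{2}\|\widetilde r\|^2 < (1 - 2\delta_2\sqrt{m/n})\|\widetilde r\|^2$. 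The condition is harmless in the $m \ll n$ regime targeted by Theorem \ref{thm:det_convg_csrate}, but it should be stated. Second, bounding $\|Pg\|^2 \leq \|g\|^2$ uses an upper tail on $\|Av_\perp\|^2$ in addition to the lower tail already used for \eqref{eq:cs_resid}, so your union bound involves four events rather than the three appearing in the stated probability — an immaterial change of constants, but one to make explicit.
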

To interpret the above results, note that 
\begin{align}
  \|\widetilde r\|_2^2 = \left\|\left(\mathbb{I}_m - \mathcal{P}_{AU}\right)Av_\perp\right\|_2^2 = \|Av_\perp\|_2^2 - \left\|\mathcal{P}_{AU}(Av_\perp)\right\|_2^2 \;.
\end{align}
where the first equality follows by the fact that $\left(\mathbb{I}_m - \mathcal{P}_{AU}\right)Av_\para = 0$ as we argued before, and the second equality holds since $\mathcal{P}_{AU}$ is an orthogonal projection onto $R(AU)$. Then by leveraging the concentration property of random projection, we can prove that $\|\widetilde r\|_2^2$ concentrates around its expectation $\frac{m - d}{n}\|v_\perp\|_2^2$ with high probability. Also note that $\|r\|_2^2 \leq \|A\|_2^2 \|\widetilde r\|_2^2$, hence the second statement \eqref{eq:cs_resid_diff} can be established by the concentration result of $\|\widetilde r\|_2^2$ and that of $\|A\|_2^2$ according to the random matrix theory.

Next we establish high probability bounds on $\|p\|_2^2$ and $\Delta$. Then Theorem \ref{thm:det_convg_csrate} follows naturally by first replacing the key quantities in Lemma \ref{lem:det_incr_expr_undersample} with their high probability bounds, and then taking the expectation over the uncertainty of the underlying full data $v_t$.

\begin{lemma}
With the same conditions as Theorem \ref{thm:det_convg_csrate}, for any $\delta_1 \in (0,1)$, we have   
  \begin{equation}
   \|p\|^2 \leq \left(1 + \sqrt{\frac{1 + \delta_1}{1 - \delta_1}\frac{d}{m}}\right)^2\|v\|^2 \nn 
  \end{equation}
with probability at least $1 - \exp\left(-\frac{d\delta_1^2}{8}\right) - \exp\left(-\frac{m\delta_1^2}{32} + d\log\left(\frac{24}{\delta_1}\right)\right)$. 
\label{lem:cs_rpconc}
\end{lemma}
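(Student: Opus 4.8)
The plan is to reduce the bound on $\|p\|$ to a bound on the perturbation coefficient $w_\perp$, and then exploit the Gaussian structure of the sampling matrix. First, since $U$ has orthonormal columns, $\|p\| = \|Uw\| = \|w\|$; more usefully, Lemma \ref{lem:uniq_expr} gives $p = p_\para + p_\perp = v_\para + Uw_\perp$, so by the triangle inequality $\|p\| \le \|v_\para\| + \|Uw_\perp\| = \|v_\para\| + \|w_\perp\|$. Since $\|v_\para\| \le \|v\|$ and $\|v_\perp\| \le \|v\|$ (orthogonal projections), it suffices to establish the high-probability bound $\|w_\perp\|^2 \le \frac{1+\delta_1}{1-\delta_1}\frac{d}{m}\|v_\perp\|^2$; squaring $\|p\| \le \|v\| + \sqrt{\frac{1+\delta_1}{1-\delta_1}\frac{d}{m}}\,\|v\|$ then yields the claim.

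Writing $B := AU \in \R^{m\times d}$, recall $w_\perp = (B^TB)^{-1}B^T(Av_\perp)$. The key structural observation is that, conditioned on $U$ and $v$, the vector $v_\perp$ is orthogonal to every column of $U$, so for Gaussian $A$ the jointly Gaussian vectors $Av_\perp$ and $B = AU$ are uncorrelated and hence independent. Thus, conditioned on $B$, we have $Av_\perp \sim \cN(0, \frac{\|v_\perp\|^2}{n}\mathbb{I}_m)$ independent of $B$, and $w_\perp$, being a linear image of a Gaussian, satisfies $w_\perp \sim \cN\!\left(0, \frac{\|v_\perp\|^2}{n}(B^TB)^{-1}\right)$. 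Writing $w_\perp = \frac{\|v_\perp\|}{\sqrt{n}}(B^TB)^{-1/2}g$ with $g \sim \cN(0,\mathbb{I}_d)$, we obtain $\|w_\perp\|^2 = \frac{\|v_\perp\|^2}{n}g^T(B^TB)^{-1}g \le \frac{\|v_\perp\|^2}{n\,\sigma_{\min}(AU)^2}\|g\|^2$, which cleanly separates a $d$-dimensional chi-square factor $\|g\|^2$ from the extreme singular value of $AU$.

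It then remains to control the two factors. For the numerator, $\|g\|^2 \sim \chi^2_d$ concentrates: $\|g\|^2 \le (1+\delta_1)d$ with probability at least $1 - \exp(-d\delta_1^2/8)$, producing the first failure term. For the denominator we need $\sigma_{\min}(AU)^2 \ge (1-\delta_1)\frac{m}{n}$; since $\sqrt{n}\,AU$ has i.i.d.\ standard Gaussian entries (as $U$ is orthonormal), this is a standard smallest-singular-value estimate proved by an $\epsilon$-net over $S^{d-1}$: for each fixed unit $z$, $\|AUz\|^2 \sim \frac1n\chi^2_m$ concentrates around $m/n$, and a net of cardinality $(3/\epsilon)^d$ with $\epsilon = \delta_1/8$, together with a union bound and Lipschitz extension, gives the bound with probability at least $1 - \exp(-\frac{m\delta_1^2}{32} + d\log(24/\delta_1))$, the second failure term. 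Combining the two via a union bound yields $\|w_\perp\|^2 \le \frac{1+\delta_1}{1-\delta_1}\frac{d}{m}\|v_\perp\|^2$ and hence the lemma. I expect the main obstacle to be the uniform lower bound on $\sigma_{\min}(AU)$: tracking the constants in the net argument so that the exponent matches $-\frac{m\delta_1^2}{32} + d\log(24/\delta_1)$ requires care in the choice of net resolution and in the per-point tail bound, and it is here that the condition $m \gtrsim d$ is essential to keep the estimate nontrivial.
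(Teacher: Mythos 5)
Your proposal is correct and follows essentially the same route as the paper: decompose $p = v_\para + Uw_\perp$, use the Gaussian-induced independence of $AU$ and $Av_\perp$ to get a $\chi^2_d$ concentration for the numerator (your $\|g\|^2$ is exactly the paper's $\tfrac{n}{\|v_\perp\|^2}\|\mathcal{P}_{AU}(Av_\perp)\|^2$), and control the denominator by the smallest singular value of $AU$ via the same $\left(24/\delta_1\right)^d$ covering-number argument, which is precisely the paper's Corollary on subspace-uniform concentration. The two failure probabilities you obtain match the paper's term for term.
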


\begin{lemma}
With the same conditions as Theorem \ref{thm:det_convg_csrate}, let $\delta_1, \delta_3 \in (0,1)$, then 
  \begin{equation}
    \Delta \leq \sqrt{\frac{1 + \delta_1}{1 - \delta_1}\frac{d}{m}}\left(\tan(\phi_d) + \delta_3\frac{d}{\cos(\phi_d)}\right)\frac{m}{n}\|v_\perp\|^2   \nn
  \end{equation}
  holds with probability at least $1 - \exp\left(-\frac{d\delta_1^2}{8}\right) - \exp\left(-\frac{m\delta_1^2}{32} + d\log\left(\frac{24}{\delta_1}\right)\right) - 4d\exp\left(-\frac{m\delta_3^2}{8}\right)$.
  \label{lem:exp_delta_cs}
\end{lemma}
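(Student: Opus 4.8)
My plan is to bound $\Delta = w_\perp^T(\bar U^T U)^{-1}\bar U^T r$ by Cauchy--Schwarz, $|\Delta|\le\|w_\perp\|\,\|(\bar U^T U)^{-1}\bar U^T r\|$, and to control the two factors separately so that their product reproduces the stated bound. Concretely, the target factors exactly as the product of a \emph{size} bound $\|w_\perp\|\le\sqrt{\frac{1+\delta_1}{1-\delta_1}\frac{d}{m}}\|v_\perp\|$ and a \emph{leakage} bound $\|(\bar U^T U)^{-1}\bar U^T r\|\le\bigl(\tan\phi_d+\delta_3\frac{d}{\cos\phi_d}\bigr)\frac{m}{n}\|v_\perp\|$, so I would establish these two inequalities and finish with a union bound over their failure events.

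The size factor is precisely the quantity already controlled inside Lemma~\ref{lem:cs_rpconc}. Since $U^T v_\perp = 0$, the vector $U^T A^T A v_\perp=(AU)^T(Av_\perp)$ has zero mean and, by the Gaussian concentration used there, norm of order $\frac{\sqrt{md}}{n}\|v_\perp\|$, while $\|(U^TA^TAU)^{-1}\|_2\le\frac{n}{m(1-\delta_1)}$; their product gives the claimed bound on $\|w_\perp\|=\|p_\perp\|$ on the event of probability $1-\exp(-d\delta_1^2/8)-\exp(-m\delta_1^2/32+d\log(24/\delta_1))$, which is why those two failure terms reappear here verbatim.

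For the leakage factor I would first rewrite $r$ via Lemma~\ref{lem:uniq_expr}: since $\mathcal{P}_{AU}(Av_\perp)=AUw_\perp=Ap_\perp$, we have $r=A^TA(v_\perp-p_\perp)$. Splitting $A^TA=\frac{m}{n}\mathbb{I}_n+(A^TA-\frac{m}{n}\mathbb{I}_n)$, the mean part contributes $\frac{m}{n}(\bar U^TU)^{-1}\bar U^Tv_\perp$, which I would bound by the key linear-algebra identity
\[
  \|(\bar U^TU)^{-1}\bar U^Tv_\perp\|\le\tan\phi_d\,\|v_\perp\|.
\]
This follows by writing $B:=\bar U^TU=P\Sigma Q^T$ with $\Sigma=\diag(\cos\phi_k)$, noting $\bar U^Tv_\perp=(\mathbb{I}_d-BB^T)s$ and $\|v_\perp\|^2=s^T(\mathbb{I}_d-BB^T)s$, and substituting $\tilde s=(\mathbb{I}_d-BB^T)^{1/2}s$ so that $B^{-1}\bar U^Tv_\perp=B^{-1}(\mathbb{I}_d-BB^T)^{1/2}\tilde s$; the operator $B^{-1}(\mathbb{I}_d-BB^T)^{1/2}=Q\diag(\tan\phi_k)P^T$ has largest singular value $\tan\phi_d$ and $\|\tilde s\|=\|v_\perp\|$. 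The fluctuation part $(\bar U^TU)^{-1}\bar U^T(A^TA-\frac{m}{n}\mathbb{I}_n)v_\perp$ I would treat componentwise: each entry $\bar u_i^T(A^TA-\frac{m}{n}\mathbb{I}_n)v_\perp$ is a mean-zero Gaussian bilinear form that, via polarization into two $\chi^2$-type terms, concentrates at scale $\delta_3\frac{m}{n}\|v_\perp\|$; a two-sided union bound over the $d$ components gives the $4d\exp(-m\delta_3^2/8)$ failure term, and multiplying by $\|(\bar U^TU)^{-1}\|_2=1/\cos\phi_d$ (the smallest singular value of $\bar U^TU$ being $\cos\phi_d$) produces the $\delta_3\frac{d}{\cos\phi_d}\frac{m}{n}\|v_\perp\|$ term. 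The remaining $p_\perp$ contribution $(\bar U^TU)^{-1}\bar U^TA^TAUw_\perp$ is higher order because $\|w_\perp\|$ is of order $\sqrt{d/m}\,\|v_\perp\|$, and I would absorb it into the $\delta_3$ term at the cost of adjusting constants.

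The main obstacle will be the fluctuation term, for two coupled reasons. First, $v_\perp-p_\perp$ depends on $A$ through $p_\perp$, so the relevant form is not a clean Gaussian chaos; I would resolve this by separating the fixed piece $v_\perp$ (on which the bilinear concentration is immediate, since $v_\perp$ is determined by $U$ alone) from the $A$-dependent piece $p_\perp$ (which I bound crudely using its already-established smallness). Second, the factor $1/\cos\phi_d$ diverges as $R(U)$ drifts away from $R(\bar U)$, so the concentration must be quantitatively strong enough to keep $\delta_3 d/\cos\phi_d$ under control; this is exactly the tension that forces the sample-complexity requirement on $m$ (scaling with $\tan\phi_d$ and $\cos\phi_d$) appearing in the hypotheses of Theorem~\ref{thm:det_convg_csrate}.
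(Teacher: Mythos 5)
Your proposal tracks the paper's own proof in nearly every component: the Cauchy--Schwarz split of $\Delta$, the size bound $\|w_\perp\|\leq\sqrt{\tfrac{1+\delta_1}{1-\delta_1}\tfrac{d}{m}}\|v_\perp\|$ (the paper's Lemma~\ref{lem:w2_bnd}, which is exactly where the first two failure terms come from), and the treatment of $\bar U^TA^TAv_\perp$ by splitting off the mean $\tfrac{m}{n}\bar U^Tv_\perp$ and controlling the $d$ coordinates of the fluctuation by polarization plus a two-sided union bound (the paper's Lemma~\ref{lem:innerProd} used inside Lemma~\ref{lem:Ubar_vperp_cs}, giving the $4d\exp(-m\delta_3^2/8)$ term). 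Your $\tan\phi_d$ identity is correct, but it is equivalent to the paper's composition of $\|(\bar U^TU)^{-1}\|\leq 1/\cos\phi_d$ with $\|\bar U^Tv_\perp\|\leq\sin\phi_d\|v_\perp\|$ (Lemma~\ref{lem:Ubar_vperp}); it buys a cleaner derivation, not a different bound.

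The genuine gap is in your leakage factor, precisely at the projection term. Writing $r=A^TA(v_\perp-p_\perp)$, you must dispose of $(\bar U^TU)^{-1}\bar U^TA^TAp_\perp$, and your plan to ``absorb it into the $\delta_3$ term at the cost of adjusting constants'' does not prove the lemma as stated. The lemma asserts the exact coefficient $\tan\phi_d+\delta_3\,d/\cos\phi_d$ for \emph{every} $\delta_3\in(0,1)$, with no lower bound on $m$ (the sample-complexity hypothesis enters only in the second half of Theorem~\ref{thm:det_convg_csrate}); your leftover term is of order $\tfrac{1}{\cos\phi_d}\sqrt{d/m}\,\tfrac{m}{n}\|v_\perp\|^2$, which is \emph{not} dominated by $\delta_3\tfrac{d}{\cos\phi_d}\tfrac{m}{n}\|v_\perp\|^2$ once $\delta_3$ is smaller than $1/\sqrt{dm}$, so the constant cannot be recovered unconditionally. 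Moreover, bounding that leftover requires controlling $\|A\bar U\|_2$ (or $\|A\|_2$), an additional high-probability event that is not budgeted in the stated failure probability. The paper sidesteps all of this with the single inequality $\|\bar U^TA^T(\mathbb{I}_m-\mathcal{P}_{AU})Av_\perp\|\leq\|\bar U^TA^TAv_\perp\|$ (its step $\vartheta_1$), which makes the projection contribution vanish at zero cost; be aware, though, that this inequality is asserted without justification and is false for a general matrix--projection pair, so your instinct that the $Ap_\perp$ piece needs explicit treatment is sound --- it simply means that your route, carried out honestly, yields a weaker constant or extra hypotheses than the lemma claims, while the paper's route reaches the stated constant through a step that itself deserves scrutiny.
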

Lemma \ref{lem:cs_rpconc} shows that $\|p\|_2^2$ doesn't diverge significantly from $\|v\|_2^2$ as long as $m\geq d$. This together with Lemma \ref{lem:det_incr_expr_undersample} and Lemma \ref{lem:rconc_cs} imply that the required number of measurements in Theorem \ref{thm:det_convg_csrate} is mainly determined by that required by Lemma \ref{lem:exp_delta_cs} so as to prevent $\Delta$ diverging too far from $\frac{m}{n}\|v_\perp\|_2^2$. As a result, the improvement on the determinant similarity is still dominated by the magnitude of the projection residual over that of the projection, which is proportional to that of the full data case scaled by the sampling density. On the other hand, Lemma \ref{lem:exp_delta_cs} implies that, in order to guarantee $\Delta$ to be much smaller than $\frac{m}{n}\|v_\perp\|_2^2$, the number of required measurements increases along with first principal angle between the estimated subspace $R(U)$ and the true subspace $R(\bar U)$.

For the sake of completeness, we sketch the proof of Theorem \ref{thm:det_convg_csrate} here, and the detailed proof is provided in Section \ref{sec:proof_of_undersampled_data}.
\begin{proof}[Proof sketch of Theorem \ref{thm:det_convg_csrate}]
   Let $\eta_1 = \frac{1 + \delta}{1 - \delta}\frac{d}{m}$, $\eta_2 = (1 - \delta)\left(1 - 2\delta\sqrt{\frac{m}{n}}\right)$ and $\eta_3 = \tan(\phi_d) + \delta\frac{d}{\cos(\phi_d)}$,  then plugging in the results in Lemmas \ref{lem:rconc_cs}, \ref{lem:cs_rpconc} and \ref{lem:exp_delta_cs} into Lemma \ref{lem:det_incr_expr_undersample} with $\delta_1=\delta_2=\delta_3=\delta$ yields,
  {\begin{align}
        \frac{\zeta_{t + 1}}{\zeta_t} &\geq 1 + \gamma_1\left(1 - \gamma_2\frac{d}{m}\right)\frac{m}{n}\frac{\|v_\perp\|^2}{\|v\|^2} \geq 1 + \gamma_1\left(1 - \gamma_2\frac{d}{m}\right)\frac{m}{n}\frac{1 - \zeta_t}{d} 
        \label{eq:det_incr_cs_sketch}
    \end{align}}
  where { $\gamma_1 = \frac{(1 - \delta)\left(1 - 2\delta\sqrt{\frac{m}{n}}\right)}{\left(1 + \sqrt{\frac{1 + \delta}{1 - \delta}\frac{d}{m}}\right)^2}$ and $\gamma_2 = \left(1 + 2\frac{\tan(\phi_d) + \delta_3\frac{d}{\cos(\phi_d)}}{\left(1 - 2\delta\sqrt{\frac{m}{n}}\right)\sqrt{(1 - \delta^2)d/m}}\right)\frac{1 + \delta}{1 - \delta}$}. 

\noindent The first probability bound is obtained by taking the union bound of those quantities used to generate Lemma \ref{lem:rconc_cs} to Lemma \ref{lem:exp_delta_cs}, which can be lower bounded by
  {\begin{align}
         1 - \exp\left(-\frac{d\delta^2}{8}\right) - \exp\left(-\frac{m\delta^2}{32}+ d\log\left(\frac{24}{\delta}\right)\right) - (4d + 2)\exp\left(-\frac{m\delta^2}{8}\right) 
        \label{eq:cs_thm_prob_bnd_sketch}
    \end{align}}

\noindent Next we establish the complexity bound on $m$. As we will prove in Section \ref{sec:proof_of_undersampled_data}, $\gamma_2\frac{d}{m}<\frac{1}{2}$ is equivalent to the following,
  {\begin{align}
       m \geq \frac{8(1+\delta)}{(1-\delta)^2\left(1 - 2\delta\right)^2} \left(\varepsilon + \delta\sqrt{1 + \varepsilon^2}d\right)\left(\varepsilon + \delta\sqrt{1 + \varepsilon^2}d + \frac{1}{2} \right) d
      \label{m_bnd_1_sketch}
    \end{align}}
To establish another bound on $m$, $m\geq \frac{32}{\delta^2}\log\left(\frac{24 n^{2/d}}{\delta}\right)d$ implies the following, 
{\begin{align}
  &\exp\left(-\frac{m\delta^2}{32} + d\log\left(\frac{24}{\delta}\right)\right) \leq \exp(-\log{n^2}) = \frac{1}{n^2} \label{prob_bnd_1_sketch} \\
  &(4d + 2)\exp\left(-\frac{m\delta^2}{8}\right) \leq \frac{(4d+2)}{n^8}\left(\frac{\delta}{24}\right)^{4d} \ll \frac{1}{n^2} \label{prob_bnd_2_sketch}
\end{align}}
\eqref{prob_bnd_1_sketch} and \eqref{prob_bnd_2_sketch} complete the proof for the bound on $m$ and justify the simplification of the probability bound in \eqref{eq:cs_thm_prob_bnd_sketch}.
\end{proof}

\subsection{Missing Data} 
\label{sec:missing_data}
In this section, we study the convergence of GROUSE for the missing data case. We show that within the local region of the true subspace, we obtain an expected monotonic improvement on our defined convergence metric with high probability. We use $\Omega$ to denote the indices of observed entries for each data vector, and we assume $\Omega$ is uniformly sampled over $\{1,2,\dots,n\}$ with replacement. In other words, we assume each row of the sampling matrices $A$ is uniformly sampled from the rows of identity matrix $\mathbb{I}_n$ with replacement. We use the notation $A v =: v_{\Omega}, AU =: U_{\Omega}$. Again our results are with high probability with respect to $A$, in this case with respect to the random draw of rows of $\mathbb{I}_n$, and in expectation with respect to the random data $v$. Please refer to Section \ref{sec:proof_of_undersampled_data} for the proofs of this section.

Before we present our main results, we first call out the typical incoherence assumption on the underlying data. 
\begin{definition}
A subspace $R(U)$ is incoherent with parameter $\mu$ if 
  \begin{equation}
    \max_{i \in \{1,\dots, n\}}\|\mathcal{P}_{U} e_i \|_2^2 \leq \frac{\mu d}{n} \nn
  \end{equation}
  where $e_i$ is the $i^{th}$ canonical basis vector and $\mathcal{P}_{U}$ is the projection operator onto the column space of $U$. 
\end{definition}

Note that $1 \leq \mu \leq \frac{n}{d}$. According to the above definition, the incoherence parameter of a vector $z\in \R^{n}$ is defined as: 
\begin{equation}
  \mu(z) = \frac{n \|z\|_{\infty}^2}{\|z\|_2^2} 
  \label{defn:vec_incoh}
\end{equation}
In this section, we assume the true subspace $R(\bar U)$ is incoherent with parameter $\mu_0$, and use $\mu(U)$, $\mu(v_\perp)$ to denote the incoherence parameter of $R(U)$ and $v_\perp$ respectively. We now show the expected improvement of $\zeta_t$ in a local region of the true subspace.

\begin{theorem}
Suppose $\sum_{k = 1}^{d}\sin^2\phi_k \leq \frac{d\mu_0}{16 n}$ and $\lvert \Omega\lvert = m$. If 
  \begin{equation}
    m > \max\left\{\frac{128d\mu_0}{3}\log\left(\sqrt{2d} n\right), 64 \mu(v_\perp)^2 \log\left(n\right), 52\left(1 + 2\sqrt{\mu(v_\perp)\log(n)}\right)^2d\mu_0\right\} \nn 
  \end{equation}
 then with probability at least $1 - \frac{3}{n^2}$ we have
  \begin{equation}
    \mathbb{E}_{v}\left[\zeta_{t + 1}\big\lvert U\right] \geq \left(1 + \frac{1}{4}\frac{m}{n}\frac{1 - \zeta_t}{d}\right) \zeta_t \;. \nn 
  \end{equation}
  \label{thm:miss_convgrate}
\end{theorem}
\vspace{-0.2cm}
This theorem shows that, within the local region of the true subspace, expected improvement on $\zeta_t$ can be obtained with high probability. As is implied by the theorem, this local region gets enlarged if the true subspace is more coherent, which may seem at first counterintuitive. However, the required number of measurements also increases as we increase $\mu_0$. In the extreme case, when $m$ increases to $n$, the local convergence results can be extended to a global result, as we proved for the full data case in Section \ref{sec:global_full}. On the other hand, compared to Theorem \ref{thm:det_convg_csrate}, the convergence result for the missing data case holds within a more conservative local region of the true subspace.
This gap is induced by the challenge of maintaining the incoherence property of our estimates $R(U)$, for which we had to consider the worst case.  We leave the extension of the local convergence results to global results as future work. 

In order to compare our result to the local convergence result in [Corollary 2.15, \cite{balzano2014local}], consider the following corollary. 
\begin{corollary}
  Define the \textit{determinant discrepancy} as $\kappa_t = 1 - \zeta_t$, then under the same conditions as Theorem \ref{thm:miss_convgrate}, we have 
  \begin{align}
  \mathbb{E}_v\left[\kappa_{t + 1} \big\lvert \kappa_t\right] \leq \left(1 -\frac{1}{4}\left(1 - \frac{d\mu_0}{16 n}\right)\frac{m}{nd}\right)\kappa_t \nn 
\end{align}
with probability exceeding $1 - 3/n^2$.
\label{coro:discrep_decay}
\end{corollary}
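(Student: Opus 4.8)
The plan is to obtain Corollary \ref{coro:discrep_decay} directly from Theorem \ref{thm:miss_convgrate} by the change of variable $\kappa_t = 1 - \zeta_t$, so the only real content is converting the multiplicative lower bound on $\zeta_t$ into a contraction for $\kappa_t$ and then replacing the residual factor using the local-region hypothesis. I would read the conclusion of Theorem \ref{thm:miss_convgrate} in its multiplicative form (consistent with Lemma \ref{lem:exp_mono_zeta} and Theorem \ref{thm:det_convg_csrate}), namely $\mathbb{E}_v\left[\zeta_{t+1}\,\middle|\,U\right] \geq \left(1 + \frac{1}{4}\frac{m}{n}\frac{1-\zeta_t}{d}\right)\zeta_t$. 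Expanding the right-hand side and writing $1 - \zeta_t = \kappa_t$ gives $\mathbb{E}_v\left[\zeta_{t+1}\,\middle|\,U\right] \geq \zeta_t + \frac{1}{4}\frac{m}{nd}\kappa_t\zeta_t$.

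Next I would subtract both sides from one. Since $\kappa_{t+1} = 1 - \zeta_{t+1}$ we have $\mathbb{E}_v\left[\kappa_{t+1}\,\middle|\,U\right] = 1 - \mathbb{E}_v\left[\zeta_{t+1}\,\middle|\,U\right]$, so the previous display yields
\begin{equation}
\mathbb{E}_v\left[\kappa_{t+1}\,\middle|\,U\right] \leq \kappa_t\left(1 - \frac{1}{4}\frac{m}{nd}\zeta_t\right) \;. \nn
\end{equation}
Because $\zeta_t$ (equivalently $\kappa_t$) is a deterministic function of $U$, conditioning on $U$ and on $\kappa_t$ coincide, so this is exactly the quantity in the corollary, and it holds on the same high-probability event used in Theorem \ref{thm:miss_convgrate}; hence the $1 - 3/n^2$ probability carries over verbatim with no new union bound.

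The one nonroutine step is converting the factor $1 - \frac14\frac{m}{nd}\zeta_t$ into the stated $1 - \frac14\left(1 - \frac{d\mu_0}{16n}\right)\frac{m}{nd}$, which requires a lower bound on $\zeta_t$ valid throughout the assumed local region. For this I would write $\zeta_t = \prod_{k=1}^d \cos^2\phi_k = \prod_{k=1}^d\left(1 - \sin^2\phi_k\right)$ and apply the Weierstrass product inequality $\prod_{k=1}^d(1 - a_k) \geq 1 - \sum_{k=1}^d a_k$ for $a_k \in [0,1]$, giving $\zeta_t \geq 1 - \sum_{k=1}^d \sin^2\phi_k$. Invoking the hypothesis $\sum_{k=1}^d \sin^2\phi_k \leq \frac{d\mu_0}{16n}$ of Theorem \ref{thm:miss_convgrate} then produces $\zeta_t \geq 1 - \frac{d\mu_0}{16n}$. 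Since the factor $1 - \frac14\frac{m}{nd}\zeta_t$ is decreasing in $\zeta_t$, this lower bound on $\zeta_t$ gives an upper bound on the factor, and substituting it into the displayed contraction finishes the proof.

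I expect no genuine obstacle here: the result is essentially a cosmetic rephrasing of Theorem \ref{thm:miss_convgrate} so as to match the form of [Corollary 2.15, \cite{balzano2014local}] for comparison. The only place to be careful is the product-to-sum inequality, where I would verify that each $\sin^2\phi_k \in [0,1]$ so the Weierstrass bound applies, and check that the local-region hypothesis is precisely what yields the advertised constant $1 - \frac{d\mu_0}{16n}$; any slack in that bound would propagate directly into the contraction factor.
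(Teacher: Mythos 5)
Your proposal is correct and follows essentially the same route as the paper: the paper likewise converts Theorem \ref{thm:miss_convgrate} (in its multiplicative form) into $\mathbb{E}\left[\kappa_{t+1}\,\middle\lvert\,\kappa_t\right] \leq \left(1 - \frac{1}{4}\frac{m}{n}\frac{\zeta_t}{d}\right)\kappa_t$ and then lower-bounds $\zeta_t \geq 1 - \sum_{k=1}^d \sin^2\phi_k \geq 1 - \frac{d\mu_0}{16n}$ via the same product-to-sum inequality, which it proves by a small monotonicity argument rather than citing Weierstrass. The only cosmetic difference is that you spell out the $\zeta_t \to \kappa_t$ conversion that the paper dismisses as a ``slight modification.''
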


Recall that $1 \leq \mu_0 \leq \frac{n}{d}$, therefore the expected linear decay rate of $\kappa_t$ is at least $1 - \frac{9}{16}\frac{m}{n d}$. In \cite{balzano2014local} (Corollary 2.15), a similar linear convergence result is established in terms of the Frobenius norm discrepancy between $R(\bar U)$ and $R(U)$, denoted as $\epsilon_t = \sum_{i = 1}^{d}\sin^2\phi_{d}$. However, their result only holds when $\epsilon_t \leq (8 \times 10^{-6})\frac{m}{n^3d^2}$ which is more conservative than our assumption in Theorem \ref{thm:miss_convgrate}. Moreover, as we mentioned previously, empirical evidence shows the lower bound in Theorem \ref{thm:miss_convgrate} holds for every iteration from any random initialization. In contrast, in \cite{balzano2014local}, even for numerical results expected linear improvements only hold within the local region of the true subspace.  

Now we present the following intermediate results for the proof of Theorem \ref{thm:miss_convgrate}. 
{Note that in this missing data case, the projection residual $r_\Omega$ of $v_\Omega$ onto $U_\Omega$ is mapped back to $\R^n$ by zero padding the entries at the indices that are not in $\Omega$. Therefore, unlike Lemma \ref{lem:exp_delta_cs} of the compressively sampled data case, here $\|\widetilde r\| = \|r\| = \|r_{\Omega}\|$. Therefore, \eqref{eq:det_ratio_undersample} becomes
\begin{equation}
  \frac{\zeta_{t + 1}}{\zeta_t} \geq 1 + \frac{\left\|r_{\Omega}\right\|^2}{\|p\|^2} + 2\frac{\Delta}{\|p\|^2} \;.
  \label{eq:miss_det}
\end{equation}}
Now similarly to the compressively sampled data case, we proceed by establishing concentration results for the key quantities $\|r\|_2^2$, $\|p\|_2^2$ and $\Delta$ respectively.
\begin{lemma}[\cite{balzano2010high}, Theorem 1] Let $\delta>0$, and suppose $m \geq \frac{8}{3}d\mu(U)\log\left(2d/\delta\right)$. Then, with probability exceeding $1-3\delta$, 
  \begin{align}
  \left\|r_\Omega\right\|^2 &\geq (1 - \alpha_0)\frac{m}{n}\left\|v_\perp\right\|^2 \nn
\end{align} 
where $\alpha_0 = \sqrt{\frac{2\mu(v_\perp)^2}{m}\log\left(\frac{1}{\delta}\right)} + \frac{(\beta_1+1)^2}{1-\gamma_1}\frac{d\mu(U)}{m}$, $\beta_1 = \sqrt{2\mu(v_\perp)\log\left(\frac{1}{\delta}\right)}$, and $\gamma_1 = \sqrt{\frac{8d\mu(U)}{3m}\log\left(2d/\delta\right)}$.
  \label{lem:rconc_miss}
\end{lemma}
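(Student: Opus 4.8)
The plan is to reproduce the argument behind [\cite{balzano2010high}, Theorem 1] in our notation. Since each row of $A$ is drawn uniformly from the rows of $\mathbb{I}_n$, the observed residual satisfies $\|r_\Omega\| = \|\widetilde r\|$, and by Lemma \ref{lem:uniq_expr} together with the fact that $\mathcal{P}_{U_\Omega}$ is an orthogonal projection onto $R(U_\Omega)$, writing $(v_\perp)_\Omega := A v_\perp$ we obtain the exact decomposition
\begin{equation}
  \|r_\Omega\|^2 = \left\|\left(\mathbb{I}_m - \mathcal{P}_{U_\Omega}\right)(v_\perp)_\Omega\right\|^2 = \|(v_\perp)_\Omega\|^2 - \left\|\mathcal{P}_{U_\Omega}(v_\perp)_\Omega\right\|^2 \;. \nn
\end{equation}
Thus it suffices to lower bound the sampled energy $\|(v_\perp)_\Omega\|^2$ and upper bound the energy $\left\|\mathcal{P}_{U_\Omega}(v_\perp)_\Omega\right\|^2$ captured by the subsampled subspace.

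First I would control $\|(v_\perp)_\Omega\|^2$. As $\Omega$ consists of $m$ indices drawn i.i.d.\ uniformly from $\{1,\dots,n\}$, this quantity is a sum of $m$ i.i.d.\ draws of the squared entries of $v_\perp$, with mean exactly $\frac{m}{n}\|v_\perp\|^2$. The incoherence parameter $\mu(v_\perp)$ bounds each summand by $\|v_\perp\|_\infty^2 = \mu(v_\perp)\|v_\perp\|^2/n$, so a scalar Bernstein/Chernoff bound gives $\|(v_\perp)_\Omega\|^2 \geq \left(1 - \sqrt{\frac{2\mu(v_\perp)^2}{m}\log(1/\delta)}\right)\frac{m}{n}\|v_\perp\|^2$ with probability at least $1-\delta$, which accounts for the first term of $\alpha_0$.

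The harder, and most substantive, step is to upper bound $\left\|\mathcal{P}_{U_\Omega}(v_\perp)_\Omega\right\|^2$. Writing $\mathcal{P}_{U_\Omega} = U_\Omega(U_\Omega^TU_\Omega)^{-1}U_\Omega^T$, I would use
\begin{equation}
  \left\|\mathcal{P}_{U_\Omega}(v_\perp)_\Omega\right\|^2 \leq \frac{\left\|U_\Omega^T (v_\perp)_\Omega\right\|^2}{\lambda_{\min}\left(U_\Omega^T U_\Omega\right)} \nn
\end{equation}
and bound the two factors separately. For the denominator, note $\E[U_\Omega^T U_\Omega] = \frac{m}{n}\mathbb{I}_d$ (using $U^TU=\mathbb{I}_d$), and $U_\Omega^T U_\Omega$ is a sum of i.i.d.\ rank-one PSD matrices whose spectral norms obey $\|U_j\|^2 \leq \mu(U)d/n$ by incoherence; a matrix Chernoff bound then yields $\lambda_{\min}(U_\Omega^T U_\Omega) \geq (1-\gamma_1)\frac{m}{n}$ with $\gamma_1 = \sqrt{\frac{8d\mu(U)}{3m}\log(2d/\delta)}$, where the hypothesis $m \geq \frac{8}{3}d\mu(U)\log(2d/\delta)$ guarantees $\gamma_1<1$. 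For the numerator, the crucial point is that $U^Tv_\perp = 0$ since $v_\perp \perp R(U)$, so $U_\Omega^T(v_\perp)_\Omega = \sum_k U_{\omega_k}^T(v_\perp)_{\omega_k}$ is a zero-mean sum of i.i.d.\ vectors; a vector Bernstein-type inequality using both $\mu(U)$ and $\mu(v_\perp)$ produces a bound scaling as $\|U_\Omega^T(v_\perp)_\Omega\|^2 \leq (\beta_1+1)^2\frac{d\mu(U)}{n}\cdot\frac{m}{n}\|v_\perp\|^2$ with $\beta_1 = \sqrt{2\mu(v_\perp)\log(1/\delta)}$. Combining the two factors gives $\left\|\mathcal{P}_{U_\Omega}(v_\perp)_\Omega\right\|^2 \leq \frac{(\beta_1+1)^2}{1-\gamma_1}\frac{d\mu(U)}{m}\cdot\frac{m}{n}\|v_\perp\|^2$, which is the second term of $\alpha_0$. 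Subtracting and taking a union bound over the three concentration events (each failing with probability at most $\delta$) yields the claim with total failure probability at most $3\delta$.

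I expect the spectral lower bound on $\lambda_{\min}(U_\Omega^T U_\Omega)$ to be the main obstacle: it requires a matrix concentration inequality applied to a sum of random rank-one projections, and it is precisely this step that forces the sampling requirement $m \gtrsim d\mu(U)\log(2d/\delta)$. Without a well-conditioned $U_\Omega$, the operator $(U_\Omega^TU_\Omega)^{-1}$ could blow up and $\mathcal{P}_{U_\Omega}$ could spuriously absorb the noise-like component $(v_\perp)_\Omega$, making the decomposition useless; controlling this conditioning is exactly what renders the residual energy a reliable surrogate for $\frac{m}{n}\|v_\perp\|^2$.
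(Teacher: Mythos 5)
Your proposal is correct and follows essentially the same route as the paper: the identical decomposition $\|r_\Omega\|^2 = \|v_{\perp,\Omega}\|^2 - \|\mathcal{P}_{U_\Omega} v_{\perp,\Omega}\|^2$, the same estimate $\|\mathcal{P}_{U_\Omega} v_{\perp,\Omega}\|^2 \leq \|(U_\Omega^T U_\Omega)^{-1}\|\,\|U_\Omega^T v_{\perp,\Omega}\|^2$, and a union bound over the same three concentration events, yielding the same $\alpha_0$ with failure probability $3\delta$. The only difference is one of packaging: the paper invokes these three ingredients directly as Lemmas \ref{lem:Av_perp_miss_conc}, \ref{lem:U_v} and \ref{lem:inv_Uomega} (all cited from \cite{balzano2010high}), whereas you sketch how each would be established via scalar, vector, and matrix concentration inequalities.
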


\begin{lemma}
Let $\delta>0$. Under the same condition on $m$ as Lemma \ref{lem:rconc_miss}, with probability at least $1 - 2\delta$ we have 
\begin{equation}
  \|p\|^2 \leq \left(1 + \frac{\beta_1 + 1}{1 - \gamma_1}\sqrt{\frac{d\mu(U)}{m}}\right)^2\|v\|^2 \nn 
\end{equation}
where $\beta_1$ and $\gamma_1$ equal to those defined in Lemma \ref{lem:rconc_miss}.
  \label{lem:pconc_miss}
\end{lemma}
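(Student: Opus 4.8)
The plan is to split $p$ into a component that exactly reproduces the true projection and a perturbation term, and to control each separately. Since $p = Uw$ with $U$ orthonormal, $\|p\| = \|w\|$, and writing $w = w_\para + w_\perp$ as in \eqref{eqs:w_decompose}, the triangle inequality gives $\|p\| \le \|w_\para\| + \|w_\perp\|$. For the first term, Lemma~\ref{lem:uniq_expr} already shows $p_\para = Uw_\para = v_\para$, so that $w_\para = U^T v_\para$ and hence $\|w_\para\| = \|v_\para\| \le \|v\|$ by orthonormality of $U$ and contractivity of the projection. This is exactly the leading $1$ in the claimed bound and needs no probabilistic argument; the entire stochastic content lives in the perturbation $\|w_\perp\|$, which is where the undersampling enters.

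For the perturbation term I would start from $w_\perp = (U_\Omega^T U_\Omega)^{-1} U_\Omega^T (v_\perp)_\Omega$ and pass to operator norms, giving $\|w_\perp\| \le \|(U_\Omega^T U_\Omega)^{-1}\|\,\|U_\Omega^T (v_\perp)_\Omega\| = \|U_\Omega^T (v_\perp)_\Omega\|/\sigma_{\min}^2(U_\Omega)$. Both factors are controlled by the two concentration facts already underlying Lemma~\ref{lem:rconc_miss}: under $m \ge \tfrac{8}{3}d\mu(U)\log(2d/\delta)$, with probability at least $1-\delta$ the restricted Gram matrix satisfies $\sigma_{\min}^2(U_\Omega) \ge (1-\gamma_1)\tfrac{m}{n}$, and, again with probability at least $1-\delta$, the incoherence-based bound gives $\|U_\Omega^T(v_\perp)_\Omega\| \le (\beta_1+1)\sqrt{\tfrac{m\,d\mu(U)}{n^2}}\,\|v_\perp\|$, with $\beta_1,\gamma_1$ as defined in Lemma~\ref{lem:rconc_miss}. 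Dividing the second estimate by the first and simplifying the powers of $m/n$ yields $\|w_\perp\| \le \tfrac{\beta_1+1}{1-\gamma_1}\sqrt{\tfrac{d\mu(U)}{m}}\,\|v_\perp\|$.

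To conclude I would bound $\|v_\perp\| \le \|v\|$, combine with $\|w_\para\| \le \|v\|$, so that
\begin{equation}
\|p\| \le \left(1 + \frac{\beta_1+1}{1-\gamma_1}\sqrt{\frac{d\mu(U)}{m}}\right)\|v\| \nn
\end{equation}
and then square to obtain the stated inequality. The probability $1-2\delta$ comes from a union bound over exactly the two events above; unlike Lemma~\ref{lem:rconc_miss} no lower concentration of $\|(v_\perp)_\Omega\|$ is required, since the \emph{upper} bound on $\|p\|$ only uses $\|v_\perp\|\le\|v\|$, which is precisely why we save one $\delta$ and reach $1-2\delta$ rather than $1-3\delta$. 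For this particular lemma there is no serious obstacle, as the two concentration estimates are inherited from the matched-subspace bounds reused in Lemma~\ref{lem:rconc_miss}; the only thing to get right is the bookkeeping, namely combining the singular-value bound and the $\|U_\Omega^T(v_\perp)_\Omega\|$ bound in the correct powers and not double-counting the failure probabilities.
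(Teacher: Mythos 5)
Your proposal is correct and follows essentially the same route as the paper's proof: decompose $p$ via Lemma \ref{lem:uniq_expr} into $v_\para$ plus the perturbation $Uw_\perp$, bound the perturbation by $\left\|\left(U_\Omega^TU_\Omega\right)^{-1}\right\|\left\|U_\Omega^T v_{\perp,\Omega}\right\|$ using Lemmas \ref{lem:inv_Uomega} and \ref{lem:U_v}, and finish with $\|v_\para\|\leq\|v\|$, $\|v_\perp\|\leq\|v\|$ and a union bound over the two events. Your observation that only two concentration events are needed (hence $1-2\delta$ rather than the $1-3\delta$ of Lemma \ref{lem:rconc_miss}) matches the paper's accounting exactly.
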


\begin{lemma}
Let $\delta>0$. Under the same condition on $m$ as Lemma \ref{lem:rconc_miss}, with probability at least $1 - 3\delta$ we have
\begin{equation}
  \left\lvert\Delta\right\lvert \leq  \frac{\eta_3}{\cos\phi_{d}}\sqrt{\sin^2\phi_d + \frac{  d\mu_0}{m}}\sqrt{\frac{d\mu(U)}{m}}\frac{m}{n}\|v_{\perp}\|^2  \nn
\end{equation}
where $\eta_3 = \frac{(1 + \beta_1)(1 + \beta_2)}{1 - \gamma_1}$, $\beta_2 = \sqrt{2\mu(v_\perp)\log\left(\frac{1}{\delta}\right)\frac{d\mu_0}{d\mu_0 + m\sin^2\phi_d}}$, and $\beta_1$ and $\gamma_1$ equal to those defined in Lemma \ref{lem:rconc_miss}.
  \label{lem:delta_miss}
\end{lemma}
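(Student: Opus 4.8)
The plan is to bound $|\Delta|$ by splitting it through Cauchy--Schwarz and then controlling each factor with a concentration argument. Writing $\Delta = w_\perp^T(\bar U^T U)^{-1}\bar U^T r$, I would first apply the submultiplicative bound
$$|\Delta| \le \|w_\perp\|\,\big\|(\bar U^T U)^{-1}\big\|_2\,\|\bar U^T r\|,$$
and identify $\big\|(\bar U^T U)^{-1}\big\|_2 = 1/\sigma_{\min}(\bar U^T U) = 1/\cos\phi_d$ from Definition \ref{defn:detdiscrepancy}. This accounts for the $1/\cos\phi_d$ prefactor, so the task reduces to bounding $\|w_\perp\|$ and $\|\bar U^T r\|$ separately.

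For $\|w_\perp\|$, I would use its closed form $w_\perp = (U_\Omega^T U_\Omega)^{-1}U_\Omega^T (v_\perp)_\Omega$. The singular-value concentration behind Lemma \ref{lem:rconc_miss} (the result of \cite{balzano2010high}) gives $\sigma_{\min}(U_\Omega^T U_\Omega) \ge (1-\gamma_1)\frac{m}{n}$, hence $\big\|(U_\Omega^T U_\Omega)^{-1}\big\|_2 \le \frac{n}{(1-\gamma_1)m}$. Since $\E_\Omega[U_\Omega^T(v_\perp)_\Omega] = \frac{m}{n}U^T v_\perp = 0$, I would bound the zero-mean vector $\|U_\Omega^T(v_\perp)_\Omega\|$ by a Bernstein-type deviation inequality controlled by the incoherence parameters; its standard deviation scales like $\sqrt{\frac{m\,d\mu(U)}{n^2}}\|v_\perp\|$, with the logarithmic correction producing the $(1+\beta_1)$ factor. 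Combining the two estimates yields $\|w_\perp\| \le \frac{1+\beta_1}{1-\gamma_1}\sqrt{\frac{d\mu(U)}{m}}\|v_\perp\|$, which carries no explicit $\frac{m}{n}$.

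For $\|\bar U^T r\|$, I would invoke $r = A^T A v_\perp - A^T\mathcal{P}_{AU}(Av_\perp)$ from Lemma \ref{lem:uniq_expr}, note $\mathcal{P}_{AU}(Av_\perp) = U_\Omega w_\perp$, and thereby write $\bar U^T r = \bar U_\Omega^T(v_\perp)_\Omega - \bar U_\Omega^T U_\Omega w_\perp$. The essential difference from the $w_\perp$ analysis is that $\E_\Omega[\bar U_\Omega^T(v_\perp)_\Omega] = \frac{m}{n}\bar U^T v_\perp$ is now \emph{nonzero}: using $\bar U^T v_\perp = (\mathbb{I}_d - \bar U^T U U^T\bar U)\bar U^T v$, whose eigenvalues are $\sin^2\phi_k$, I would extract the deterministic bias as the $\sin^2\phi_d$ contribution. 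The stochastic deviation around this mean, controlled via a Bernstein bound using the incoherence $\mu_0$ of $\bar U$ and $\mu(v_\perp)$, produces the $\frac{d\mu_0}{m}$ term and the $(1+\beta_2)$ factor, so that bias and deviation combine as $\sqrt{\sin^2\phi_d + \frac{d\mu_0}{m}}$ at the scale $\frac{m}{n}\|v_\perp\|$. The cross term $\bar U_\Omega^T U_\Omega w_\perp$ is absorbed using $\|\bar U_\Omega^T U_\Omega\|_2 \le \frac{m}{n}$ together with the bound on $\|w_\perp\|$ already obtained.

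Finally, I would multiply the factor bounds from the previous two paragraphs with the $1/\cos\phi_d$ prefactor and take a union bound over the three concentration events (the singular-value bound on $U_\Omega^T U_\Omega$ and the two deviation bounds), recovering the claimed probability $1-3\delta$. I expect the main obstacle to be the bound on $\|\bar U^T r\|$: unlike the fully sampled case, where $v_\perp \perp R(U)$ forces clean cancellations, here I must simultaneously extract the deterministic $\sin\phi_d$ bias from the geometry of the principal angles and concentrate a mean-shifted quantity using both $\mu_0$ and $\mu(v_\perp)$, then glue these under a single square root. Keeping the incoherence parameters consistent under the sampling-with-replacement model and computing the Bernstein variance proxies correctly is the delicate bookkeeping that ultimately drives the stated requirement on $m$.
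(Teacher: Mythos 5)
Your high-level architecture coincides with the paper's: split $|\Delta| \le \|w_\perp\|\,\|(\bar U^TU)^{-1}\|\,\|\bar U^T r\|$, identify $\|(\bar U^TU)^{-1}\| \le 1/\cos\phi_d$, bound $\|w_\perp\|$ by combining $\|(U_\Omega^TU_\Omega)^{-1}\|\le \frac{n}{(1-\gamma_1)m}$ (Lemma \ref{lem:inv_Uomega}) with the concentration of $\|U_\Omega^Tv_{\perp,\Omega}\|$ (Lemma \ref{lem:U_v}), and control $\|\bar U_\Omega^Tv_{\perp,\Omega}\|$ by a bias-plus-deviation argument around the nonzero mean $\frac{m}{n}\bar U^Tv_\perp$ with $\|\bar U^Tv_\perp\|\le\sin\phi_d\|v_\perp\|$ — this is exactly the paper's Lemma \ref{lem:Ubar_v}, proved there with McDiarmid's inequality rather than Bernstein, but with the same content and the same $(1+\beta_2)\sqrt{\sin^2\phi_d+\frac{d\mu_0}{m}}\,\frac{m}{n}$ outcome. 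The divergence is in how the projection inside $r = (\mathbb{I}_m-\mathcal{P}_{U_\Omega})v_{\perp,\Omega}$ is handled, and that is where your argument fails to deliver the lemma as stated.

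The paper disposes of the projection in one stroke via the contraction step
\[
\left\|\bar U_\Omega^T\left(\mathbb{I}_m-\mathcal{P}_{U_\Omega}\right)v_{\perp,\Omega}\right\| \le \left\|\bar U_\Omega^Tv_{\perp,\Omega}\right\|,
\]
so that exactly three high-probability events (Lemmas \ref{lem:inv_Uomega}, \ref{lem:U_v}, \ref{lem:Ubar_v}) are consumed and the product of the three factor bounds is literally the stated inequality with probability $1-3\delta$. You instead expand $\bar U^T r = \bar U_\Omega^Tv_{\perp,\Omega} - \bar U_\Omega^TU_\Omega w_\perp$ and keep the cross term, which creates two problems. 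First, your claim $\|\bar U_\Omega^TU_\Omega\|_2\le\frac{m}{n}$ is not deterministic: it holds for the expectation $\E[\bar U_\Omega^TU_\Omega]=\frac{m}{n}\bar U^TU$, but under sampling with replacement the realized matrix can be as large as $m\max_i\|\bar U_i\|\|U_i\|$ (all draws hitting one coherent row), so you need a fourth concentration event, which already breaks the $1-3\delta$ accounting. Second, even granting that bound, the cross term adds $\frac{1+\beta_1}{1-\gamma_1}\frac{m}{n}\sqrt{\frac{d\mu(U)}{m}}\|v_\perp\|$ to $\|\bar U^Tr\|$ on top of the main term $(1+\beta_2)\frac{m}{n}\sqrt{\sin^2\phi_d+\frac{d\mu_0}{m}}\|v_\perp\|$. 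This extra piece is of the same order as the main one and cannot be absorbed into the stated constant $\eta_3=\frac{(1+\beta_1)(1+\beta_2)}{1-\gamma_1}$: relating $\sqrt{d\mu(U)/m}$ to $\sqrt{\sin^2\phi_d+d\mu_0/m}$ requires $\mu(U)\le 2\mu_0$, i.e. Lemma \ref{lem:uincoh}, whose local-region hypothesis Lemma \ref{lem:delta_miss} does not assume, and even then the constant inflates by roughly $1+\sqrt{2}\,\frac{1+\beta_1}{(1+\beta_2)(1-\gamma_1)}$. So as proposed you prove a strictly weaker statement (worse constant, extra hypothesis, or extra probability-$\delta$ event, depending on how you patch it). To recover the lemma exactly you need the paper's step that drops the $\mathcal{P}_{U_\Omega}$ contribution outright, rather than paying for it term by term.
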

Lemma \ref{lem:rconc_miss} shows that the concentration of $\|r\|_2^2 = \|r_\Omega\|_2^2$ does not only depend on the sampling framework, but also on the incoherence property of the current estimate and the true projection residual, \ie $\mu(U)$ and $\mu(v_\perp)$. To see this clearly, recall that $\|r_\Omega\|_2^2 = \left\|v_{\perp, \Omega}\right\|_2^2 - \left\|\mathcal{P}_{U_\Omega}\left(v_{\perp, \Omega}\right)\right\|_2^2$, hence the incoherence property of $v_\perp$ and $R(U)$ directly influences the concentration of $\|r_\Omega\|_2^2$. On the other hand, for compressive data, the Gaussian distributed sampling matrices yield tight concentration results for $\|p\|_2^2$, $\|r_\Omega\|_2^2$ and $\Delta$. Therefore, the upper bounds of the key quantities established in Lemmas \ref{lem:rconc_miss}, \ref{lem:pconc_miss} and \ref{lem:delta_miss} are not as tight as those for the compressive data except the extreme case where $\mu(U) = \mu(v_\perp) = 1$, \ie both $R(U)$ and $v_\perp$ are incoherent.

As shown in the above lemmas, in order to establish concentration of the key quantities in (\ref{eq:miss_det}), it is essential for the subspaces generated by GROUSE to be incoherent over iterates. It has been proven in \cite{balzano2014local} that within the local region of $R(\bar U)$, the incoherence of $R(U)$ can be bounded by that of $R(\bar U)$.

\begin{lemma}[\cite{balzano2014local}, Lemma 2.5]
Suppose $\sum_{k = 1}^{d}\sin^2\phi_k \leq \frac{d}{16 n}\mu_0$, then $\mu(U) \leq 2\mu_0$. 
  \label{lem:uincoh}
\end{lemma}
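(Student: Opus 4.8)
The plan is to bound the coherence of $R(U)$ by comparing it entrywise to that of the true subspace $R(\bar U)$, whose coherence $\mu_0$ is assumed known. Since $\mu(U)$ is determined by $\max_{i}\|\mathcal{P}_U e_i\|^2$, it suffices to show $\|\mathcal{P}_U e_i\|^2 \leq \frac{2\mu_0 d}{n}$ for every canonical basis vector $e_i$ and then take the maximum over $i$.

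First I would split $\mathcal{P}_U e_i$ with the triangle inequality,
\begin{equation}
  \|\mathcal{P}_U e_i\| \leq \|\mathcal{P}_{\bar U} e_i\| + \left\|\left(\mathcal{P}_U - \mathcal{P}_{\bar U}\right) e_i\right\| \;, \nn
\end{equation}
and bound the first term directly by the incoherence assumption on the true subspace, giving $\|\mathcal{P}_{\bar U} e_i\| \leq \sqrt{\mu_0 d/n}$. For the second term the key ingredient is the standard identity relating the operator norm of the difference of two equal-dimensional orthogonal projections to the largest principal angle, $\|\mathcal{P}_U - \mathcal{P}_{\bar U}\|_2 = \sin\phi_d$. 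Because each $\sin^2\phi_k$ is dominated by the full sum, the hypothesis $\sum_{k = 1}^{d}\sin^2\phi_k \leq \frac{d\mu_0}{16 n}$ forces $\sin^2\phi_d \leq \frac{d\mu_0}{16 n}$, so that $\|(\mathcal{P}_U - \mathcal{P}_{\bar U}) e_i\| \leq \sin\phi_d \leq \frac14\sqrt{\mu_0 d/n}$.

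Combining the two bounds yields $\|\mathcal{P}_U e_i\| \leq \frac54\sqrt{\mu_0 d/n}$, hence $\|\mathcal{P}_U e_i\|^2 \leq \frac{25}{16}\frac{\mu_0 d}{n} \leq \frac{2\mu_0 d}{n}$, and taking the maximum over $i$ gives $\mu(U)\leq 2\mu_0$. The only nontrivial step is the operator-norm identity $\|\mathcal{P}_U - \mathcal{P}_{\bar U}\|_2 = \sin\phi_d$; I would justify it from the CS/SVD characterization of principal angles, choosing orthonormal bases so that $\bar U^T U = \diag(\cos\phi_k)$, in which coordinates the symmetric matrix $\mathcal{P}_U - \mathcal{P}_{\bar U}$ decomposes into $2\times 2$ blocks with eigenvalues $\pm\sin\phi_k$. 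This is the main obstacle only in the sense of invoking that structural fact; the rest is the triangle inequality and the coherence definition, and the generous constant ($\tfrac{25}{16}<2$) means no delicate estimate is required.
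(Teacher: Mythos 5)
Your proof is correct, but it takes a genuinely different route from the paper. The paper aligns the two bases directly: it invokes Lemma \ref{lem:Udist} to produce an orthogonal matrix $V$ with $\|\bar U V - U\|_F^2 \leq 2\sum_{k=1}^d \sin^2\phi_k$, then bounds each row via the triangle inequality $\|U_i\| \leq \|\bar U_i\| + \|\bar U_i V - U_i\| \leq \left(1 + \tfrac{1}{2\sqrt 2}\right)\sqrt{d\mu_0/n}$, giving the constant $\left(1 + \tfrac{1}{2\sqrt 2}\right)^2 \approx 1.83 \leq 2$. You instead work with the projection operators themselves and use the identity $\|\mathcal{P}_U - \mathcal{P}_{\bar U}\|_2 = \sin\phi_d$, which is indeed a standard consequence of the CS decomposition (the same structural fact as the paper's Lemma \ref{lem:subsprelate}: in the aligned coordinates $\mathcal{P}_U - \mathcal{P}_{\bar U}$ has $2\times 2$ blocks with eigenvalues $\pm\sin\phi_k$), and the crude bound $\sin^2\phi_d \leq \sum_k \sin^2\phi_k \leq \frac{d\mu_0}{16n}$. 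Your route avoids introducing the alignment matrix $V$ entirely, replaces a Frobenius-norm lemma with an operator-norm identity, and yields a slightly sharper constant ($\tfrac{25}{16} \approx 1.56$); the paper's route avoids the projection-difference identity but needs the Procrustes-type Lemma \ref{lem:Udist} imported from \cite{balzano2014local}. Both arguments are sound and both land comfortably under the target factor of $2$.
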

Now we are ready to prove Theorem \ref{thm:miss_convgrate}. We sketch the proof here, and a detailed proof is provided in Section \ref{sec:proof_of_undersampled_data}.
\begin{proof}[{Proof sketch of Theorem \ref{thm:miss_convgrate}}]
Given the condition required by Theorem \ref{thm:miss_convgrate}, we have $\sin\phi_d \leq \sqrt{d\mu_0/16n}$ and $\cos\phi_d \geq \sqrt{1 - d\mu_0 / 16n}$. This together with Lemma \ref{lem:uincoh} and Lemma \ref{lem:delta_miss} yield $\left\lvert\Delta\right\lvert \leq \frac{11}{5}\eta_3\frac{d\mu_0}{n}\left\|v_\perp\right\|^2$. Also for $\beta_2$ in Lemma \ref{lem:delta_miss}, $\beta_2 \leq \sqrt{2\mu(v_\perp)\log(1/\delta)} = \beta_1$. Hence, 
\begin{equation}
  \left\lvert\Delta\right\lvert \leq \frac{11}{5}\frac{(1 + \beta_1)^2}{1-\gamma_1}\frac{d\mu_0}{n}\|v_\perp\|^2 \;.
  \label{eq:simp_delta_miss_sketch}
\end{equation} 
Letting $\eta_2 = \frac{(1 + \beta_1)^2}{1-\gamma_1}\frac{d\mu_0}{m}$ and $\alpha_1 = \sqrt{\frac{2\mu(v_\perp)^2}{m}\log\left(\frac{1}{\delta}\right)}$, then applying this definition together with Lemma \ref{lem:uincoh} to Lemma \ref{lem:pconc_miss} and Lemma \ref{lem:rconc_miss} yields
\begin{align}
  &\left\|p\right\|^2 \leq \left(1 + \sqrt{\frac{2\eta_2}{1 - \gamma_1}}\right)^2\|v\|^2 \label{eq:pmiss_simp_sketch} \\
  & \left\|r_\Omega\right\|^2 \geq (1 - \alpha_1 - 2\eta_2)\frac{m}{n}\left\|v_\perp\right\|^2 
  \label{eq:rmiss_simp_sketch}
\end{align}
Now applying \eqref{eq:simp_delta_miss_sketch}, \eqref{eq:pmiss_simp_sketch} and \eqref{eq:rmiss_simp_sketch} to \eqref{eq:miss_det} we have
\begin{align}
  \frac{\zeta_{t+1}}{\zeta_t} 
  &\geq 1 + \frac{(1 - \alpha_1 - \frac{32}{5}\eta_2)}{(1 + \sqrt{2\eta_2/(1 -\gamma_1)})^2}\frac{m}{n} \frac{\|v_\perp\|^2}{\|v\|^2}
  \label{eq:miss_pf_det_raw_sketch}
\end{align}
with probability at least $1 - 3\delta$. The probability bound is obtained by taking the union bound of those generating Lemmas \ref{lem:rconc_miss}, \ref{lem:pconc_miss} and \ref{lem:delta_miss}, as we can see in the proofs in Section \ref{sec:proof_of_undersampled_data} this union bound is at least $1 - 3\delta$. 

Letting $\eta_1 = \frac{(1 - \alpha_1 - \frac{32}{5}\eta_2)}{(1 + \sqrt{2\eta_2/(1-\gamma_1)})^2}$, then $\eta_1> 0$ is equivalent to $1 - \alpha_1 - \frac{32}{5}\eta_2 >0$. This further gives that if $m$ satisfies the condition in Theorem \ref{thm:miss_convgrate}, then $\eta_1 > \frac{1}{4}$. Now taking expectation with respect to $v$ yields, 
\begin{equation}
  \mathbb{E}_v\left[\zeta_{t + 1} \big\lvert U\right] \geq \left(1 + \frac{1}{4}\frac{m}{n}\mathbb{E}\left[\frac{\|v_\perp\|^2}{\|v\|^2}\big\lvert U\right]\right)\zeta_t \geq \left(1 + \frac{1}{4}\frac{m}{n}\frac{1 - \zeta_t}{d}\right)\zeta_t
\end{equation}
where the last inequality follows from Lemma \ref{lem:key_quantity}. Finally choosing $\delta$ to be $1/n^2$completes the proof. 
\end{proof}

\section{Numerical Results}
\label{sec:experment}
\setlength\figurewidth{0.85\columnwidth} 
\setlength\figureheight{0.35 \columnwidth} 
\begin{figure}[!ht]\footnotesize
  \begin{center}
    \ifbool{DrawFigures}{
%
\begin{tikzpicture}

\begin{axis}[%
width=0.444444444444444\figurewidth,
height=0.555555555555556\figureheight,
at={(0.555555555555556\figurewidth,0\figureheight)},
scale only axis,
view={0}{90},
every outer x axis line/.append style={white!15!black},
every x tick label/.append style={font=\color{white!15!black}},
xmin=10,
xmax=100,
xtick={ 10,  40,  70, 100},
xlabel={$d$},
xmajorgrids,
every outer y axis line/.append style={white!15!black},
every y tick label/.append style={font=\color{white!15!black}},
ymin=500,
ymax=5000,
ytick={ 500, 2500, 4500},
ylabel={$n$},
ymajorgrids,
every outer z axis line/.append style={white!15!black},
every z tick label/.append style={font=\color{white!15!black}},
zmin=3.29698575782831e-06,
zmax=0.000879890274018717,
zmajorgrids,
title style={font=\bfseries},
title={$\widehat{\mathrm{Var}}\left[K/(d^2\log(n) + d\log(1 - \zeta^\ast))\right]$},
axis x line*=bottom,
axis y line*=left,
axis z line*=left,
scaled ticks = false, xmajorgrids=false, ymajorgrids=false, axis x line=bottom, axis y line=left, every axis x label/.style={at={(current axis.south east)},anchor=west},  every axis y label/.style={at={(current axis.north west)},anchor=south},
colormap/jet,
colorbar horizontal,
colorbar style={separate axis lines,every outer x axis line/.append style={white!15!black},every x tick label/.append style={font=\color{white!15!black}},every outer y axis line/.append style={white!15!black},every y tick label/.append style={font=\color{white!15!black}}},
point meta min=3.29698575782831e-06,
point meta max=0.000879890274018717
]

\addplot3[%
surf,
faceted color=black,
shader=faceted,
colormap/jet,
mesh/rows=10]
table[row sep=crcr,header=false] {%
10	500	0.000596323240543501\\
10	1000	0.000271494386419479\\
10	1500	6.71855788220541e-05\\
10	2000	7.60221813327341e-05\\
10	2500	3.69694583016502e-05\\
10	3000	1.54474301094484e-05\\
10	3500	1.21255121872094e-05\\
10	4000	6.23807803622616e-06\\
10	4500	7.40480171693545e-06\\
10	5000	5.57549541568989e-06\\
20	500	0.000670113574539312\\
20	1000	0.000123711063901791\\
20	1500	0.000106765063132829\\
20	2000	5.69622157374721e-05\\
20	2500	3.06386607737317e-05\\
20	3000	1.36780785384464e-05\\
20	3500	1.28576512218354e-05\\
20	4000	6.69382252336445e-06\\
20	4500	4.81277979501163e-06\\
20	5000	6.50923878050508e-06\\
30	500	0.000781578626285758\\
30	1000	0.000207121115317456\\
30	1500	9.04068600129132e-05\\
30	2000	2.122602373229e-05\\
30	2500	3.99239434162461e-05\\
30	3000	2.02930346635521e-05\\
30	3500	2.34468119838972e-05\\
30	4000	1.32007610008885e-05\\
30	4500	7.63586561596731e-06\\
30	5000	4.59788315712924e-06\\
40	500	0.000715832927650073\\
40	1000	8.82650417277107e-05\\
40	1500	5.20470166830703e-05\\
40	2000	7.32853863398004e-05\\
40	2500	1.46293179187546e-05\\
40	3000	1.88858963336718e-05\\
40	3500	7.89997659592903e-06\\
40	4000	1.27969045581798e-05\\
40	4500	6.76573281282173e-06\\
40	5000	9.76538997808435e-06\\
50	500	0.000354235526177687\\
50	1000	0.000209237532128569\\
50	1500	7.31043686422187e-05\\
50	2000	7.04539779021705e-05\\
50	2500	3.14887875678796e-05\\
50	3000	1.31910000454795e-05\\
50	3500	9.07466092549343e-06\\
50	4000	1.89665493749507e-05\\
50	4500	5.06095687681189e-06\\
50	5000	3.73813969795992e-06\\
60	500	0.000746513427947547\\
60	1000	0.000354231098486113\\
60	1500	6.24304573920003e-05\\
60	2000	3.2152425675106e-05\\
60	2500	2.12261532890704e-05\\
60	3000	7.80701052865077e-06\\
60	3500	1.20772113252092e-05\\
60	4000	9.39462515616341e-06\\
60	4500	4.94921977827458e-06\\
60	5000	7.33756986143377e-06\\
70	500	0.000663047300947332\\
70	1000	0.000117183860362591\\
70	1500	4.88924587911712e-05\\
70	2000	3.21908981119418e-05\\
70	2500	2.64388625548206e-05\\
70	3000	9.32159927521496e-06\\
70	3500	1.03191075287021e-05\\
70	4000	7.50784110311827e-06\\
70	4500	4.94067229452248e-06\\
70	5000	4.17467291604087e-06\\
80	500	0.000429670584596072\\
80	1000	0.000153996947280239\\
80	1500	8.12407431144762e-05\\
80	2000	2.89409713494193e-05\\
80	2500	3.24511520617512e-05\\
80	3000	1.78620679186788e-05\\
80	3500	1.62610955133431e-05\\
80	4000	6.43129298694604e-06\\
80	4500	4.21642385411436e-06\\
80	5000	6.61617062411983e-06\\
90	500	0.000879890274018717\\
90	1000	7.64745950242355e-05\\
90	1500	7.47456871566045e-05\\
90	2000	3.16978703650528e-05\\
90	2500	2.8415960388248e-05\\
90	3000	1.06458628334069e-05\\
90	3500	1.24806298748132e-05\\
90	4000	8.78892153205104e-06\\
90	4500	3.52255499927056e-06\\
90	5000	5.49766601787916e-06\\
100	500	0.000445197158885553\\
100	1000	0.000101072810714596\\
100	1500	0.000114906258476765\\
100	2000	1.28151787693256e-05\\
100	2500	1.35863517725944e-05\\
100	3000	1.25702607577597e-05\\
100	3500	1.94229421243486e-05\\
100	4000	7.41177534821017e-06\\
100	4500	3.29698575782831e-06\\
100	5000	5.95643273765954e-06\\
};
\end{axis}

\begin{axis}[%
width=0.444444444444445\figurewidth,
height=0.555555555555556\figureheight,
at={(0\figurewidth,0\figureheight)},
scale only axis,
view={0}{90},
every outer x axis line/.append style={white!15!black},
every x tick label/.append style={font=\color{white!15!black}},
xmin=10,
xmax=100,
xtick={ 10,  40,  70, 100},
xlabel={$d$},
xmajorgrids,
every outer y axis line/.append style={white!15!black},
every y tick label/.append style={font=\color{white!15!black}},
ymin=500,
ymax=5000,
ytick={ 500, 2500, 4500},
ylabel={$n$},
ymajorgrids,
every outer z axis line/.append style={white!15!black},
every z tick label/.append style={font=\color{white!15!black}},
zmin=0.0272012156562988,
zmax=0.23819393326667,
zmajorgrids,
title style={font=\bfseries},
title={$\widehat{\mathrm{E}}\left[K/(d^2\log(n) + d\log(1 - \zeta^\ast))\right]$},
axis x line*=bottom,
axis y line*=left,
axis z line*=left,
scaled ticks = false, xmajorgrids=false, ymajorgrids=false, axis x line=bottom, axis y line=left, every axis x label/.style={at={(current axis.south east)},anchor=west},  every axis y label/.style={at={(current axis.north west)},anchor=south},
colormap/jet,
colorbar horizontal,
colorbar style={separate axis lines,every outer x axis line/.append style={white!15!black},every x tick label/.append style={font=\color{white!15!black}},every outer y axis line/.append style={white!15!black},every y tick label/.append style={font=\color{white!15!black}}},
point meta min=0.0272012156562988,
point meta max=0.23819393326667
]

\addplot3[%
surf,
faceted color=black,
shader=faceted,
colormap/jet,
mesh/rows=10]
table[row sep=crcr,header=false] {%
10	500	0.23819393326667\\
10	1000	0.223235879276348\\
10	1500	0.216170148295355\\
10	2000	0.218338480623332\\
10	2500	0.210861420349189\\
10	3000	0.209243415126034\\
10	3500	0.204186825688651\\
10	4000	0.205098517407477\\
10	4500	0.208760436986486\\
10	5000	0.207524847540074\\
20	500	0.144167085479711\\
20	1000	0.13233540970703\\
20	1500	0.131135979190733\\
20	2000	0.125794636550435\\
20	2500	0.125253035431963\\
20	3000	0.127260504028832\\
20	3500	0.117454957795827\\
20	4000	0.119927885365199\\
20	4500	0.120054630775443\\
20	5000	0.11971315344987\\
30	500	0.102354717909339\\
30	1000	0.093347646086628\\
30	1500	0.0906456345370041\\
30	2000	0.0896148698780685\\
30	2500	0.0871099864671774\\
30	3000	0.0873914837863311\\
30	3500	0.0867234445713873\\
30	4000	0.0850239636134074\\
30	4500	0.0836587287958704\\
30	5000	0.0848927637564892\\
40	500	0.0792555839797657\\
40	1000	0.0749770896882569\\
40	1500	0.0687403880608508\\
40	2000	0.0688515409331148\\
40	2500	0.0699082706136762\\
40	3000	0.066630741139975\\
40	3500	0.065642512324239\\
40	4000	0.064780428353251\\
40	4500	0.0643579763381401\\
40	5000	0.0644068527165736\\
50	500	0.0643056575402827\\
50	1000	0.0602071276557131\\
50	1500	0.0591136117098092\\
50	2000	0.0557765667475368\\
50	2500	0.0558157536741937\\
50	3000	0.0539677251464824\\
50	3500	0.0544184096753541\\
50	4000	0.0534134979444562\\
50	4500	0.0545119258214879\\
50	5000	0.0511672451243724\\
60	500	0.0532586915982165\\
60	1000	0.0507384984382984\\
60	1500	0.0499450392221963\\
60	2000	0.0490270720130192\\
60	2500	0.0465960280747145\\
60	3000	0.045165688906335\\
60	3500	0.0461012670530287\\
60	4000	0.045397749501713\\
60	4500	0.044624299317212\\
60	5000	0.0448028732519438\\
70	500	0.0469101844514621\\
70	1000	0.0452839455440687\\
70	1500	0.0449211154968827\\
70	2000	0.0407082282871925\\
70	2500	0.0414724031622326\\
70	3000	0.0413424057370412\\
70	3500	0.0398043248249403\\
70	4000	0.039097601934874\\
70	4500	0.0388874387037527\\
70	5000	0.0404870618897745\\
80	500	0.040615166000303\\
80	1000	0.0381416755537641\\
80	1500	0.0379703844420277\\
80	2000	0.0374207356875253\\
80	2500	0.0366451529204431\\
80	3000	0.0356378015037752\\
80	3500	0.0351180980008267\\
80	4000	0.0344978935475362\\
80	4500	0.0348184776426137\\
80	5000	0.0342493046933627\\
90	500	0.037346149486389\\
90	1000	0.0343625675834102\\
90	1500	0.0344853903406754\\
90	2000	0.0336072597962442\\
90	2500	0.0317510241290618\\
90	3000	0.0322393974512222\\
90	3500	0.0317031868811921\\
90	4000	0.0305339061988143\\
90	4500	0.0301888414396341\\
90	5000	0.0299683101064155\\
100	500	0.0335335882408509\\
100	1000	0.031716830781452\\
100	1500	0.0301251738571533\\
100	2000	0.0307413431806563\\
100	2500	0.0293539179000507\\
100	3000	0.0288037911713605\\
100	3500	0.0282811139535851\\
100	4000	0.0283936307969904\\
100	4500	0.0287047469609287\\
100	5000	0.0272012156562988\\
};
\end{axis}
\end{tikzpicture}%
    }{
      \textcolor{green}{\rule{\figurewidth}{\figureheight}}
    }
  \end{center}
  \caption{Illustration of the bounds on $K$ in Conjecture \ref{thm:global} compared to their values in practice, averaged over $50$ trials with different $n$ and $d$. We show the ratio of $K$ to the bound $d^2\log(n) + d\log(1 - \zeta^\ast)$.}
\label{fig:Knfree}
\end{figure}
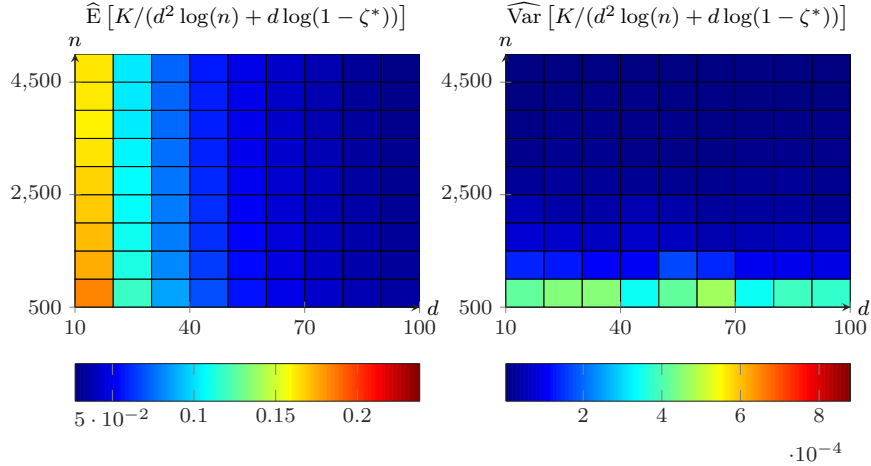
\sloppypar In this section, we demonstrate that our theoretical results match the empirical convergence behavior of GROUSE. We generate the underlying data matrix $M = \left[\begin{matrix} v_1 & v_2 & \dots & v_T \end{matrix}\right]$ as $M = \bar U W$. For both the fully sampled data case and compressively sampled data case, the underlying signals are generated from a sparse subspace, demonstrating that incoherence assumptions are not required by our results for these two cases. Specifically, the underlying subspace of each trial is set to be a sparse subspace, as the range of an $n\times d$ matrix $\bar U$ with sparsity on the order of $\frac{\log(n)}{n}$. For the missing data case, we generate the underlying subspace as the range of an $n\times d$ matrix with i.i.d standard normal distribution.  The entries of the coefficient matrix $W$ for all three cases are generated as i.i.d $\mathcal{N}(0,1)$ satisfying Assumption \ref{cond:v}. We also want to mention that we run GROUSE with random initialization for all of the plots in this section. 

\begin{figure}[!ht]\footnotesize
\setlength\figurewidth{0.92\columnwidth} 
\setlength\figureheight{0.35 \columnwidth}
  \input{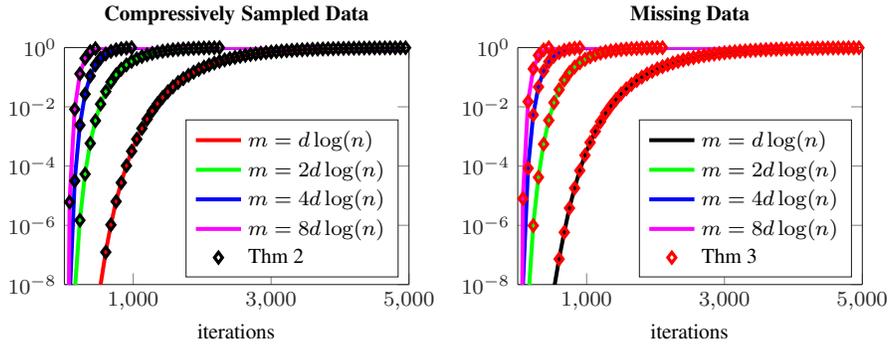}
\caption{Illustration of expected improvement on $\zeta$ given by Theorem \ref{thm:det_convg_csrate} \text{(left)} and Theorem \ref{thm:miss_convgrate} \text{(right)} over $50$ trials. We set $n = 5000$, $d = 10$. The diamonds denote the lower bound on expected convergence rates described in Theorem \ref{thm:det_convg_csrate} and Theorem \ref{thm:miss_convgrate}.}
\label{fig:det_ratio}
\end{figure} 

\setlength\figurewidth{0.90\columnwidth} 
\setlength\figureheight{0.90 \columnwidth} 
\begin{figure}[!ht]\footnotesize
  \begin{center}
    \ifbool{DrawFigures}{
      \input{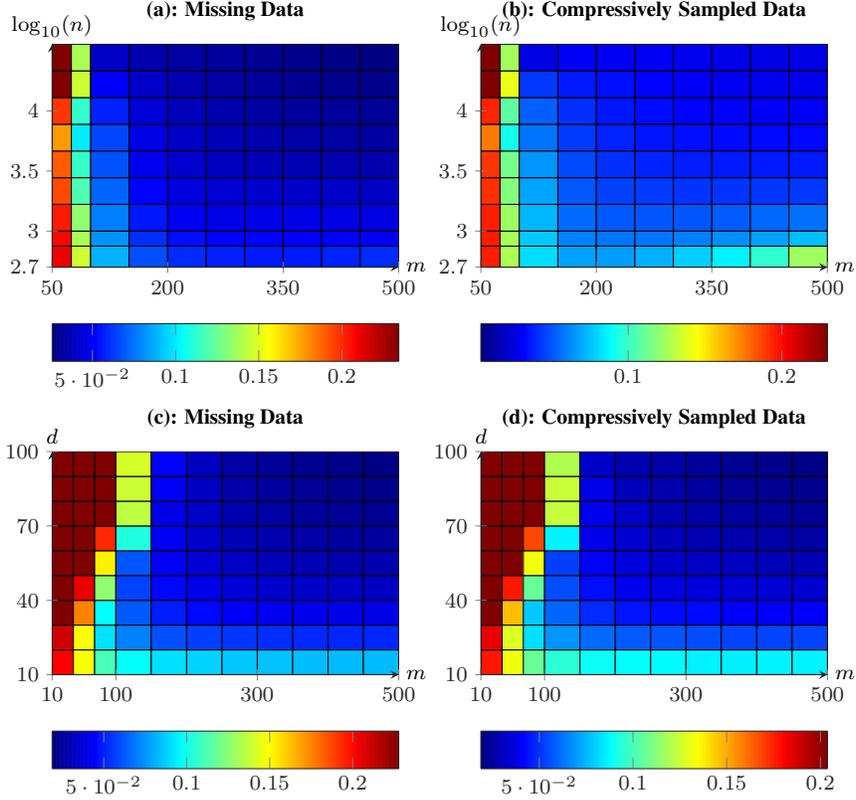}
    }{
      \textcolor{green}{\rule{\figurewidth}{\figureheight}}
    }
  \end{center}
  \caption{Illustration of our heuristic bounds on $K$ (the actual iterations required by GROUSE to converge to the given accuracy) over different $d$, $m$ and $n$, averaged over 20 trials. In this simulation, we run GROUSE from a random initialization to convergence for a required accuracy $\zeta^\ast = 1 - 1e$-3. We show the ratio of $K$ to the heuristic bound $\frac{n}{m}\left(d^2\log(n) + d\log(1 - \zeta^\ast)\right)$.   In {(a)} and {(b)}, we set $d = 50$ and examine $K$ over $m$ and $n$ for both \text{missing data {(a)}} and \text{compressively sampled data {(b)}}. In {(c)} and {(d)}, we set $n = 10000$ and examine $K$ over $m$ and $d$ for both \text{missing data {(c)}} and \text{compressively sampled data {(d)}}. In these plots, we use the dark red to indicate the failure of convergence.}
  \label{fig:heuristic}
\end{figure}

We first examine our global convergence result, \ie Theorem \ref{thm:asymp_full} and Conjecture \ref{thm:global}, for the fully sampled data in Figure \ref{fig:Knfree}. We run GROUSE to convergence for a required accuracy $\zeta^\ast = 1 - 1e$-4 and show the ratio of $K$ to the simplified bound of Conjecture \ref{thm:global}, $d^2\log(n) + d\log\frac{1}{1 - \zeta^\ast}$. We run GROUSE over $50$ trials and show the mean and variance. We can see that, for fixed $n$, despite the conjecture's tighter convergence rate than the theorem's, it becomes loose as we increase the dimension of the underlying subspace. However, compared to the empirical mean, the empirical variance is very small. This indicates that the relationship between our conjectured upper bounds and the actual iterations required by GROUSE is stable.  

Next we examine our theoretical results (Theorem \ref{thm:det_convg_csrate} and Theorem \ref{thm:miss_convgrate}) for the expected improvement on $\zeta_t$ for the undersampled case in Figure \ref{fig:det_ratio}. We set $n = 5000$ and $d = 10$. We run GROUSE over different sampling numbers $m$. The plots are obtained by averaging over $50$ trials. We can see that our theoretical bounds on the expected improvement on $\zeta_t$ for both missing data and compressively sampled data are tight from any random initialization, although we have only established local convergence results for both cases. Also note that Theorem \ref{thm:det_convg_csrate} and Theorem \ref{thm:miss_convgrate} indicate that the expected improvement on the determinant similarity has a similar form to that of the fully sampled case roughly scaled by the sampling density $(m/n)$. These together motivate us to approximate the required iterations to achieve a given accuracy as that required by the fully sampled case times the reciprocal of sampling density, $n/m$: 
\vspace{-0.0cm}
\begin{equation}
	 \left(n/m\right) \cdot \left( d^2\log(n) + d\log(1 - \zeta^\ast)\right)\;. \nn
\end{equation}
As we see in Figure \ref{fig:heuristic}, when $m$ is slightly larger than $d$, the empirical mean of the ratio of the actual iterations required by GROUSE to our heuristic bound is similar to that of the full data case. We leave the rigorous proof of this heuristic as future work. 

\section{Conclusion}
\label{sec:conclustion}
\revise{In this paper, we analyze a manifold incremental gradient descent algorithm applied to a particular non-convex optimization formulation for recovering a low-dimensional subspace from streaming data sampled from that subspace. We provide a simplified analysis as compared to \cite{zhang2015global}, showing global convergence of the algorithm to the global minimizer for fully sampled data. However, the convergence rate we have established in theory is loose compared to what we observed in practice. A future direction is to narrow the gap between our theory and the actual performance of GROUSE, for which Conjecture \ref{thm:global} shows great promise. 

With undersampled data, we show that expected improvement on our defined convergence metric can be obtained with high probability for each iteration. We prove that, comparing with fully sampled data, the expected improvement on determinant similarity is roughly proportional to the sampling density. With compressively sampled data this expected improvement holds from any random initialization, while it only holds within the local region of the true subspace for the missing data case. The limitation on the convergence of missing data arises due to the challenge of maintaining the incoherence property of our estimates in theory. Crossing this fundamental hurdle and extending the local convergence with missing data to a global result would be an interesting and valuable future direction.
} 

\appendix
\section{Supplementary material}
\subsection{Preliminaries}
\label{sec:appendix_a}
We start by providing the following lemma that we will use regularly in the manipulation of the matrix $\bar{U}^T U$. It also provides us with more insight into our metric of determinant similarity between the subspaces. The proof can be found in \cite{stewart1990matrix}. 
\begin{lemma}[\cite{stewart1990matrix}, Theorem 5.2]
Let $U, \bar U \in \R^{n\times d}$ with orthonormal columns, then  there are unitary matrices $Q$, $\bar{Y}$, and $Y$ such that 
\begin{equation}
  Q\bar{U}\bar{Y} := \bordermatrix{&d\cr
                d& I \cr
               d & 0 \cr
                n-2d & 0 }~~\text{and}~~
  Q U Y := \bordermatrix{&d\cr
                d& \Gamma \cr
               d & \Sigma \cr
                n-2d & 0 }   \nonumber
\end{equation}
where $\Gamma = \diag{(\cos\phi_{1}, \dots, \cos\phi_{d})}, \Sigma = \diag{(\sin\phi_{1}, \dots, \sin\phi_{d})}$ with $\phi_{ i}$ being the $i^{th}$ principal angle between $R(U)$ and $R(\bar U)$ defined in Definition \ref{defn:detdiscrepancy}.
\label{lem:subsprelate}
\end{lemma}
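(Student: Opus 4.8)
The plan is to reduce the statement to a singular value decomposition of the $d\times d$ matrix $\bar U^T U$ and then build the large orthogonal matrix $Q$ by completing an orthonormal basis. First I would take the SVD $\bar U^T U = \bar Y \Gamma Y^T$, where $\bar Y, Y \in \R^{d\times d}$ are orthogonal and $\Gamma$ is diagonal with nonnegative entries. Since $\bar U$ and $U$ have orthonormal columns, $\|\bar U^T U\|_2 \le 1$, so every singular value lies in $[0,1]$ and may be written as $\cos\phi_k$ for some $\phi_k \in [0,\pi/2]$; this defines the principal angles and fixes $\Gamma = \diag(\cos\phi_{1},\dots,\cos\phi_{d})$ together with $\Sigma = \diag(\sin\phi_{1},\dots,\sin\phi_{d})$ satisfying $\Gamma^2 + \Sigma^2 = I$. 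These $\bar Y$ and $Y$ are exactly the right factors sought in the statement.

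Next I would rotate the two frames by these factors, setting $\tilde{\bar U} := \bar U\bar Y$ and $\tilde U := UY$, which still have orthonormal columns and now satisfy $\tilde{\bar U}^T \tilde U = \bar Y^T(\bar U^T U)Y = \Gamma$. The key algebraic step is to analyze the residual of $\tilde U$ after projecting onto $R(\tilde{\bar U})$, namely $W := \tilde U - \tilde{\bar U}\Gamma$. By construction $\tilde{\bar U}^T W = 0$, and a direct computation using $\tilde{\bar U}^T \tilde U = \Gamma$ and $\tilde U^T \tilde U = I$ gives $W^T W = I - \Gamma^2 = \Sigma^2$. Thus the columns of $W$ are mutually orthogonal, orthogonal to $R(\tilde{\bar U})$, and have squared norms $\sin^2\phi_{k}$.

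From here I would assemble $Q$. On the indices where $\sin\phi_{k} > 0$, I normalize the corresponding columns of $W$ by $\Sigma^{-1}$ to obtain an $n\times d$ matrix $B$ with orthonormal columns satisfying $B^T\tilde{\bar U}=0$ and $B^T\tilde U = \Sigma$; the columns with $\sin\phi_{k} = 0$ contribute a zero column of $W$ and are filled in by any unit vectors completing the orthonormality. Since $\tilde U = \tilde{\bar U}\Gamma + B\Sigma$ lies in $R([\tilde{\bar U},\, B])$, I then complete $[\tilde{\bar U},\, B]$ (which has $2d \le n$ orthonormal columns) to a full orthogonal matrix $[\tilde{\bar U},\, B,\, C]$ with $C \in \R^{n\times(n-2d)}$, and set $Q := [\tilde{\bar U},\, B,\, C]^T$. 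Then $Q\bar U\bar Y = Q\tilde{\bar U}$ equals the block column $[I;\,0;\,0]$, while $QUY = Q\tilde U$ equals $[\Gamma;\,\Sigma;\,0]$, which is precisely the claim.

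I expect the main obstacle to be the degenerate case in which $\Sigma$ is singular (some principal angle is zero), so that $B = W\Sigma^{-1}$ is not literally defined; the fix is to treat the zero and nonzero singular directions separately and to complete the orthonormal frame freely on the degenerate block, which does not affect the displayed block form because the matching zero entries of $\Sigma$ absorb this ambiguity. A secondary point worth stating explicitly is the standing assumption $2d \le n$, which is what makes the partition with its trailing $(n-2d)$ block well defined.
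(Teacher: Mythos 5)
Your proposal is correct, but note that the paper itself offers no proof of this lemma: it is stated as imported from Stewart and Sun (Theorem 5.2 of the cited reference), so there is no in-paper argument to compare against. Your construction is essentially the classical one underlying that reference -- a direct proof of (a special case of) the CS decomposition: take the SVD $\bar U^T U = \bar Y \Gamma Y^T$, identify the singular values with $\cos\phi_k$ (valid since $\|\bar U^T U\|_2 \le 1$, matching the paper's Definition 1 of principal angles), pass to the rotated frames $\tilde{\bar U} = \bar U\bar Y$, $\tilde U = UY$, and observe that the residual $W = \tilde U - \tilde{\bar U}\Gamma$ satisfies $\tilde{\bar U}^T W = 0$ and $W^T W = \Sigma^2$, so its normalized columns supply the second block row of $Q$. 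Your two flagged caveats are exactly the right ones and are handled correctly: the degenerate directions with $\sin\phi_k = 0$ give zero columns of $W$, and the corresponding columns of $B$ can be completed arbitrarily because the matching zero entries of $\Sigma$ make the identity $B^T\tilde U = \Sigma$ insensitive to that choice; and the block structure with a trailing $(n-2d)$-row block does presuppose $2d \le n$, an assumption implicit in the lemma's statement (and harmless in the paper's regime $d \ll n$). The only cosmetic remark is that the lemma orders the angles consistently with Definition 1, which your SVD can be arranged to respect by sorting singular values; with that, $Q\bar U\bar Y = [\,I;\,0;\,0\,]$ and $QUY = [\,\Gamma;\,\Sigma;\,0\,]$ follow exactly as you compute.
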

Now we are going to prove Lemma \ref{lem:key_quantity}, which is essential for us to establish expected improvement on the determinant similarity for each iteration in the various sampling cases we consider. Before that, we present the following lemmas that are requried for the proof.

\revise{\begin{lemma} Given any matrix $Q \in \R^{d \times d}$, suppose that $w \in \R^d$ is a random vector whose components $w_i$, $i=1,\dots,d$ are zero-mean, independent, and identically distributed symmetrically about zero (\ie the distribution of $w_i$ is an even function). Then $$E\left[ \frac{w^T Q w}{w^Tw} \right] = \frac{1}{d} \trace (Q)\;.$$
\label{lem:expisovec}
\end{lemma}

\begin{proof}[{Proof of Lemma \ref{lem:expisovec}}]
\begin{align}
E\left[ \frac{w^T Q w}{w^Tw} \right] &= \sum_{i\neq j} E\left[ \frac{w_i w_j Q_{ij}}{w^Tw} \right] + \sum_{i=1}^d E\left[ \frac{w_i^2 Q_{ii}}{w^Tw} \right] \nn \\
&= \sum_{i=1}^d  Q_{ii} E\left[ \frac{w_i^2}{w^Tw} \right] \label{step:symmarg} \\
&= \frac{1}{d} \trace Q\label{step:trick}\;,
\end{align}
where Eqs (\ref{step:symmarg}) and (\ref{step:trick}) hold by the following two arguments. For Eq (\ref{step:symmarg}), let $f(w_1, \dots, w_d)$ be the joint distribution among the coordinates, and without loss of generality let $i=1$ and $j \neq 1$, then
\begin{align}
&E\left[ \frac{w_1 w_j Q_{1j}}{w^Tw} \right]\nn \\
& = \int_{-\infty}^\infty \cdots \int_{-\infty}^\infty  \frac{w_1 w_j Q_{ij}}{w^Tw} f(w_1, \dots, w_d) d w_1 d w_2 \cdots dw_d \nn\\
&= \int_{-\infty}^\infty \cdots \int_{-\infty}^\infty  \frac{w_1 w_j Q_{1j}}{w_1^2 + \sum_{k\neq i} w_k^2} f(w_1) f(w_2) \cdots f(w_d) d w_1 d w_2 \cdots dw_d \nn\\
&= \int_{-\infty}^\infty \cdots \int_{-\infty}^\infty \left(  \int_{-\infty}^\infty  \frac{w_1 }{w_1^2 + \sum_{k\neq i} w_k^2} f(w_1) dw_1 \right) w_j Q_{1j} f(w_2) \cdots f(w_d) d w_2 \cdots dw_d \nn \\
&= 0 \nn
\end{align} 
where the last inequality holds since $\frac{w_1 }{w_1^2 + \sum_{k\neq i} w_k^2}$ is an odd function of $w_1$ and $f(w_1)$ is an even function of $w_1$, thereby the term in parentheses will integrate to zero. We note that if $w_i$ is a discrete random variable, the argument would be similar. 

To get Eq (\ref{step:trick}) we note that 
\begin{align}
1 = E \left[ \frac{\sum_i w_i^2 }{\sum_j w_j^2} \right] = \sum_i E \left[ \frac{w_i^2}{w^Tw} \right] = d E \left[ \frac{w_i^2}{w^Tw} \right] \;, i=1,\dots, d\;,
\nn
\end{align} 
where the last step holds because each $w_i$ is identically distributed.
\end{proof}}

\begin{lemma}[\cite{sa2015global}, Lemma 16]
Let $X = [X_1, \cdots, X_d]$ with $X_i \in [0,1], i = 1, \dots, d$, then 
\begin{align}
   d - \sum_{i = 1}^{d}X_i \geq  1 - \Pi_{i = 1}^{d}X_i \nn 
\end{align}
  \label{lem:zeta_eps_relate}
\end{lemma}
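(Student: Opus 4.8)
The plan is to rewrite the claim in the equivalent form $1 - \prod_{i=1}^d X_i \le \sum_{i=1}^d (1 - X_i)$, which is legitimate because $d - \sum_i X_i = \sum_i (1 - X_i)$, and then to prove this inequality via a telescoping decomposition of the left-hand side. The hypothesis $X_i \in [0,1]$ will enter only through the elementary fact that every partial product $\prod_{i<j} X_i$ again lies in $[0,1]$, so I would keep track of exactly where each half of that hypothesis ($X_i \ge 0$ and $X_i \le 1$) is invoked.

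First I would record the telescoping identity
\begin{equation}
1 - \prod_{i=1}^d X_i = \sum_{j=1}^d \left(\prod_{i=1}^{j-1} X_i\right)(1 - X_j),\nn
\end{equation}
which holds because the $j$-th summand equals $\prod_{i=1}^{j-1} X_i - \prod_{i=1}^{j} X_i$ (with the empty product read as $1$), so that the sum collapses to $1 - \prod_{i=1}^d X_i$. This is the only nonobvious ingredient, and it is purely algebraic.

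Next, since each factor lies in $[0,1]$, every partial product obeys $0 \le \prod_{i=1}^{j-1} X_i \le 1$, while $1 - X_j \ge 0$. Hence each summand is bounded above by $1 - X_j$, and summing gives
\begin{equation}
1 - \prod_{i=1}^d X_i = \sum_{j=1}^d \left(\prod_{i=1}^{j-1} X_i\right)(1 - X_j) \le \sum_{j=1}^d (1 - X_j) = d - \sum_{j=1}^d X_j,\nn
\end{equation}
which is precisely the asserted bound.

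An equivalent route is a short induction on $d$: the base case $d=1$ is an equality, and the inductive step writes $1 - \prod_{i=1}^{k+1} X_i = (1 - \prod_{i=1}^k X_i) + (\prod_{i=1}^k X_i)(1 - X_{k+1})$, bounding the first term by the inductive hypothesis and the second by $1 - X_{k+1}$ using $\prod_{i=1}^k X_i \le 1$. Either way the argument is elementary, and there is no genuine obstacle here beyond spotting the telescoping decomposition (equivalently, choosing the right factor to peel off in the induction); the substantive mathematical content of the paper lies in the lemmas that \emph{use} this inequality rather than in the inequality itself.
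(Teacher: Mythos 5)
Your proof is correct, and it takes a genuinely different route from the paper. The paper itself does not prove Lemma \ref{lem:zeta_eps_relate} where it is stated (it is imported by citation from \cite{sa2015global}), but the equivalent inequality does get proved inside the paper, in the proof of Corollary \ref{coro:discrep_decay}: there, with $X_i = \sin^2\phi_i$, the authors set $f(X) = 1 - \sum_{i=1}^{d}X_i - \prod_{i=1}^{d}(1 - X_i)$, compute $\frac{\partial f}{\partial X_i} = -1 + \prod_{j\neq i}(1 - X_j) \leq 0$, conclude $f$ is coordinatewise nonincreasing, and deduce $f(X) \leq f(0) = 0$. That is a calculus argument exploiting monotonicity of a multivariate function on the cube $[0,1]^d$. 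Your argument instead rests on the purely algebraic telescoping identity $1 - \prod_{i=1}^{d}X_i = \sum_{j=1}^{d}\bigl(\prod_{i=1}^{j-1}X_i\bigr)(1 - X_j)$, bounding each summand by $1 - X_j$ since partial products stay in $[0,1]$; your inductive variant is the same idea packaged recursively. Both proofs are short and complete, but yours is more elementary (no differentiation, no appeal to coordinatewise monotonicity on a domain) and makes visible exactly where each half of the hypothesis $X_i \in [0,1]$ is used: nonnegativity keeps the partial products nonnegative, and the upper bound $X_i \leq 1$ keeps them at most one and keeps each factor $1 - X_j$ nonnegative. The calculus route, by contrast, generalizes naturally if one wants sharper statements about where equality or near-equality occurs, since it identifies the extremal point $X = 0$ directly.
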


\begin{proof}[{Proof of Lemma \ref{lem:key_quantity}}]
  According to Lemma \ref{lem:expisovec} and Lemma \ref{lem:zeta_eps_relate} we have the following 
  \begin{align}
    \mathbb{E}\left[\frac{\|v_{\perp}\|^2}{\|v\|^2} \bigg\lvert U\right] = \mathbb{E}\left[\frac{\|\bar U s\|^2 - \|UU^T\bar U s\|^2}{\|\bar U s\|^2}\bigg\lvert U\right] &\overset{\vartheta_1}= \mathbb{E}\left[\frac{s^T\bar Y(I - \Gamma^2)\bar Y^T s}{s^Ts}\bigg\lvert U\right] \nn \\
    &\overset{\vartheta_2}{=} \frac{1}{d}\trace\left(I - \Gamma^2\right) \overset{\vartheta_3}\geq  \frac{1 - \zeta_t}{d} 
    \label{eq:vconc} 
  \end{align}
  where $\vartheta_1$ follows by Lemma \ref{lem:subsprelate} and $\|\bar U s\|^2 = \|s\|^2$, $\vartheta_2$ from Lemma \ref{lem:expisovec}, and $\vartheta_3$ from Lemma \ref{lem:zeta_eps_relate} with $X_i = \cos^2\phi_{i}$.
\end{proof}

\subsection{Proof of Fully Sampled Data}
\label{sec:proof_full_nfree}
In this section we prove the results of Section \ref{sec:global_full}. We start by proving Eq \ref{eq:det_ratio_expr}, the deterministic expression for the change in determinant similarity from one step of the GROUSE algorithm to the next. Using this expression, we prove the GROUSE monotonic improvement of Lemma \ref{lem:mono_full}, expected improvement of Lemma \ref{lem:exp_mono_zeta}, and finally the global convergence \new{conjecture} \ref{thm:global}. 

Recall that $\frac{y}{\|y\|} = \cos (\theta) \frac{v_\para}{\|v_\para\|} + \sin(\theta) \frac{v_\perp}{\|v_\perp\|}$ in Algorithm \ref{alg:grouse}. Then according to the GROUSE update in \ref{eq:gpupdate} we have 

\begin{align}
  \det \left(\bar U^T U_{t + 1}\right) &= \det\left(\bar U^T U + \left(\frac{\bar U^T y}{\|y\|} - \frac{\bar U^T v_{\parallel}}{\|v_{\parallel}\|}\right)\frac{w^T}{\|w\|}\right) \nn \\
  &\overset{\vartheta_1}= \det\left(\bar U^T U\right)\left(1 + \frac{w^T(\bar U^T U)^{-1}}{\|w\|}\left(\frac{\bar U^T y}{\|y\|} - \frac{\bar U^T v_{\parallel}}{\|v_{\parallel}\|}\right)\right) \nn \\
      &\overset{\vartheta_2}= \det\left(\bar U^T U\right)\frac{w^T(\bar U^TU)^{-1} \bar U^T y}{\|y\|\|w\|} \nn \\
      &\overset{\vartheta_3}= \det\left(\bar U^T U\right)\left(\cos\theta + \frac{\|v_{\perp}\|}{\|v_{\parallel}\|}\sin\theta\right) 
  \label{eq:full_det_ratio_prove}
\end{align}
where $\vartheta_1$ follows from the Schur complement, \ie that for any invertible matrix $M$ we have $\det\left(M + a b^T\right) = \det(M)\left(1 + b^TM^{-1}a\right)$; $\vartheta_2$ and $\vartheta_3$ hold since $\|v_{\para}\|^2 = \|U w\|^2 = \|w\|^2$ and the following
\begin{subequations}
  \begin{align}
  & w^T(\bar U^TU)^{-1}\bar U^Tv_{\parallel} \overset{w = U^T\bar U s}= v^Tv_{\parallel} = \|v_{\parallel}\|^2 \\
  & w^T(\bar U^T U)^{-1}\bar U^T v_{\perp} \overset{w = U^T\bar U s}= v^Tv_{\perp} = \|v_{\perp}\|^2.
\end{align} 
\label{eq:fullnfree_eqs}
\end{subequations}

Given this, the proof of Lemma \ref{lem:mono_full} follows directly from the above proof and the greedy step size derived in Eq. \ref{eq:stepsize_greedy}. 
\begin{proof}[Proof of Lemma \ref{lem:mono_full}]
By using $\theta = \arctan\left(\frac{\|v_{\perp}\|}{\|v_{\parallel}\|}\right)$, we have $\cos\theta=\frac{\|v_\para\|}{\|v\|}$ and $\sin\theta = \frac{\|v_\perp\|}{\|v\|}$. This together with \ref{eq:full_det_ratio_prove} gives $\det\left(\bar U^TU_{t + 1}\right)=\det\left(\bar U^TU\right)\frac{\|v\|}{\|v_\para\|}$. Therefore, $\frac{\zeta_{t + 1}}{\zeta_t} = \frac{\det\left(\bar U^TU_{t + 1}\right)^2}{\det\left(\bar U^TU\right)^2} = \frac{\|v\|^2}{\|v_{\parallel}\|^2} = 1 + \frac{\|v_{\perp}\|^2}{\|v_{\parallel}\|^2}$.
\end{proof}

\begin{proof}[Proof of Lemma \ref{lem:exp_mono_zeta}]
Lemma \ref{lem:exp_mono_zeta} follows directly from \ref{lem:key_quantity} and \ref{lem:mono_full}, \ie
  \begin{align}
    \mathbb{E}\left[\frac{\zeta_{t + 1}}{\zeta_t}\bigg\lvert U\right] = 1 + \mathbb{E}\left[\frac{\|v_{\perp}\|^2}{\|v_{\parallel}\|^2}\bigg\lvert U\right] &\geq 1 + \mathbb{E}\left[\frac{\|v_{\perp}\|^2}{\|v\|^2}\bigg\lvert U\right] \nn \\
    &\geq 1 + \frac{1 - \zeta_t}{d}
  \end{align}
  Note that, given $U$, $\zeta_t$ is a constant, hence completes the proof.
\end{proof}

\revise{
With the above results, we are ready to prove Theorem \ref{thm:asymp_full}.
\begin{proof}[{Proof of Theorem \ref{thm:asymp_full}}.]
Let $\kappa_t = 1 - \zeta_t$ denote the determinant \emph{discrepancy} between $R(\bar U)$ and $R(U)$. According to Lemma \ref{lem:exp_mono_zeta} we have the following:
\begin{align}
    \mathbb{E}\left[\frac{\kappa_{t + 1}}{\kappa_t}\bigg\lvert U\right] \leq 1 - \frac{1 - \kappa_t}{d} 
    \label{eq:eta_rate_0}
\end{align}
Now according to Lemma \ref{lem:mono_full},  $\kappa_t \leq 1 - \zeta_0$ for all $t \geq 0$. So using Eq (\ref{eq:eta_rate_0}) we have the following:
  \begin{equation}
    \mathbb{E}\left[\kappa_{t + 1}\big\lvert U\right]\leq \left(1 - \frac{1 - \kappa_t}{d}\right)\kappa_t \leq \left(1 - \frac{\zeta_0}{ d}\right)\kappa_t  \nn \;.
  \end{equation}
  Taking expectation of both sides, we have
  \begin{equation}
    \mathbb{E}\left[\kappa_{t + 1}\right] \leq \left(1 - \frac{\zeta_0}{ d}\right)\mathbb{E}\left[\kappa_t\right]  \nn \;.
  \end{equation}
  After $K \geq \frac{d}{\zeta_0}\log\frac{1}{\rho(1 - \zeta^{\ast})}\geq \frac{d}{\zeta_0}\log\frac{\mathbb{E}[\eta_{K_1}]}{\rho(1 - \zeta^{\ast})}$ iterations of GROUSE we obtain
  \begin{align}
    \mathbb{E}\left[\kappa_{t + K_1}\right] \leq \left(1 - \frac{\zeta_0}{d}\right)^{K} \mathbb{E}[\kappa_{0}] &\leq \left(1 - \frac{\zeta_0}{ d}\right)^{\frac{d}{\zeta_0}\log\frac{\mathbb{E}[\kappa_{0}]}{\rho(1 - \zeta^{\ast})}} \mathbb{E}[\kappa_{0}] \leq \rho(1 - \zeta^{\ast}) \nn \;.
  \end{align}
  Therefore
  \begin{align}
    \mathbb{P}\left(\zeta_{K} \geq \zeta^{\ast}\right) = 1 - \mathbb{P}\left(\kappa_{K} \geq 1 - \zeta^{\ast}\right) &\geq 1 - \frac{\mathbb{E}\left[\kappa_{K}\right]}{1 - \zeta^\ast} \geq 1 - \rho \;.
  \end{align}
\end{proof}

To get full convergence results, we need the following lemma, which gives us guarantees for a random initial point. 
\begin{lemma}\cite{nguyen2014random}
Initialize the starting point $U_0$ of GROUSE as the orthonormalization of an $n\times d$ matrix
with entries being standard normal random variables. Then 
\begin{equation}
    \mathbb{E}[\zeta_0] = \mathbb{E}\left[\det(U_0^T\bar U\bar U^TU_0)\right] = C \left(\frac{d}{n e}\right)^d \nonumber
    \label{eq:exp_zeta0}
 \end{equation} 
 where $C>0$ is a constant.    
  \label{lem:exp_zeta0}
\end{lemma}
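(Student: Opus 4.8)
The plan is to compute $\E[\zeta_0]$ exactly and then extract the stated asymptotic form by Stirling's formula. Since $\zeta_0 = \det(\bar U^T U_0 U_0^T \bar U)$ depends only on the subspace $R(U_0)$, and orthonormalizing a Gaussian matrix $V \in \R^{n\times d}$ produces a subspace $R(U_0) = R(V)$ whose law is invariant under orthogonal transformations, I would first use rotational invariance to assume without loss of generality that $\bar U = [e_1, \dots, e_d]$ consists of the first $d$ standard basis vectors. Then $\bar U^T U_0$ is simply the top $d\times d$ block of $U_0$, and writing $(U_0)_S$ for the $d\times d$ submatrix of $U_0$ formed by the rows indexed by a $d$-subset $S \subseteq \{1,\dots,n\}$, we get $\zeta_0 = \det\bigl((U_0)_{\{1,\dots,d\}}\bigr)^2$.

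The key step is a Parseval-type identity via the Cauchy--Binet formula. Because $U_0$ has orthonormal columns,
\begin{equation}
\sum_{|S| = d} \det\bigl((U_0)_S\bigr)^2 = \det(U_0^T U_0) = 1, \nn
\end{equation}
where the sum runs over all $\binom{n}{d}$ subsets $S$ of size $d$. I would then argue that every summand has the same expectation: permuting the rows of $V$ by a permutation matrix $P$ leaves the entrywise Gaussian law of $V$ unchanged, and since $R(PV) = P\,R(V)$, the random subspace $R(U_0)$ is exchangeable with respect to the coordinates. Consequently $\det\bigl((U_0)_S\bigr)^2$ has the same expectation for every $S$ of size $d$, and taking expectations of the identity above yields
\begin{equation}
1 = \binom{n}{d}\,\E[\zeta_0], \qquad \text{hence} \qquad \E[\zeta_0] = \frac{1}{\binom{n}{d}} = \frac{d!\,(n-d)!}{n!}. \nn
\end{equation}
Finally, applying Stirling's approximation to $d!\,(n-d)!/n!$ for fixed $d$ and large $n$, one has $n!/(n-d)! = n(n-1)\cdots(n-d+1) \sim n^d$ and $d! \sim \sqrt{2\pi d}\,(d/e)^d$, so $\E[\zeta_0] \sim \sqrt{2\pi d}\,\bigl(d/(ne)\bigr)^d$, which is exactly the claimed form with $C = \sqrt{2\pi d}$.

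I expect the main obstacle to be the exchangeability step rather than the algebra: one must confirm that the orthonormalization commutes with coordinate permutations at the level of the \emph{subspace} (it does, because $\zeta_0$ is a function of $R(U_0)$ alone and $R(PV) = P\,R(V)$), so that the particular orthonormalization procedure is immaterial and the $\binom{n}{d}$ terms are genuinely identically distributed. An alternative, more computational route avoids this symmetry argument entirely: writing $V = [V_1; V_2]$ with $V_1 \in \R^{d\times d}$, one has $\zeta_0 = \det(V_1^T V_1)/\det(V^T V) = \det(A)/\det(A+C)$ with independent Wisharts $A = V_1^T V_1 \sim W_d(I,d)$ and $C = V_2^T V_2 \sim W_d(I,n-d)$, exhibiting $\zeta_0$ as the determinant of a matrix-variate $\mathrm{Beta}_d\bigl(d/2,(n-d)/2\bigr)$ matrix whose first determinant moment equals $\prod_{i=1}^d (d-i+1)/(n-i+1) = 1/\binom{n}{d}$, recovering the same answer. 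Since the Cauchy--Binet argument is cleaner and self-contained, I would present it as the main line and relegate the Wishart computation to a remark.
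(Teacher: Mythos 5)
Your proposal is correct, but the comparison here is one-sided: the paper never proves this lemma at all --- the statement is imported by citation to \cite{nguyen2014random} and used as a black box in the proof of Theorem \ref{thm:global} --- so any valid self-contained argument is necessarily a different route. Your main line is sound at every step: $\zeta_0$ depends only on the subspace $R(U_0)=R(V)$; the law of that subspace is invariant under every orthogonal map, in particular under coordinate permutations; each squared minor $\det\bigl((U_0)_S\bigr)^2$ is itself a function of the subspace alone (invariant under $U_0\mapsto U_0Q$ for orthogonal $Q$), so the $\binom{n}{d}$ minors are identically distributed; hence the Cauchy--Binet identity $\sum_{|S|=d}\det\bigl((U_0)_S\bigr)^2=\det(U_0^TU_0)=1$ averages to $1=\binom{n}{d}\,\mathbb{E}[\zeta_0]$. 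The resulting exact identity $\mathbb{E}[\zeta_0]=\binom{n}{d}^{-1}$ is in fact sharper and cleaner than the lemma as stated, since it holds for all $n,d$ with no unspecified constant, and your Wishart/matrix-beta remark correctly recovers the same value $\prod_{i=1}^{d}(d-i+1)/(n-i+1)=1/\binom{n}{d}$, so the two routes agree. Two caveats concern only the final repackaging, not the argument: (i) the constant you extract, $C=\sqrt{2\pi d}$, grows with $d$, so ``constant'' must be read as ``constant for fixed $d$''; this is harmless for the paper's use of $C$ inside $\tau_0$, where only $\log C/(d\log n)$ enters and is negligible, but it is at odds with the paper's parenthetical claim in Theorem \ref{thm:global} that $C$ is approximately equal to $1$; (ii) the replacement $n!/(n-d)!\sim n^d$ is valid for fixed $d$ (or $d=o(\sqrt{n})$), whereas for $d$ proportional to $n$ an extra factor $\prod_{i=0}^{d-1}(1-i/n)^{-1}=e^{\Theta(d^2/n)}$ appears. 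Neither caveat is a gap: I would simply present the exact identity $\mathbb{E}[\zeta_0]=1/\binom{n}{d}$ as the main result and state the $C\left(d/(ne)\right)^d$ form as a corollary in the appropriate asymptotic regime.
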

}

\new{Now we will show a result that gives evidence for Conjecture \ref{thm:global}. 

\begin{theorem}[Global Convergence of GROUSE: Evidence for Conjecture \ref{thm:global}]
 Let $1 \geq \zeta^*>0$ be the desired accuracy of our estimated subspace. 
Let $\rho$ be any number within the range $(0,1]$. Let $\bar{\zeta_t}$ be a non-decreasing sequence with $\mathbb{E}[\bar{\zeta}_0] = \mathbb{E}[\zeta_0]$ such that
  \begin{equation}
    \mathbb{E}\left[\bar{\zeta}_{t + 1}\big\lvert U\right] \geq  \left(1 + \frac{\rho}{2 d}\right)\bar{\zeta}_t \;. \nn 
  \end{equation}
 Assume the $\zeta_t$ produced by GROUSE converges faster than $\bar{\zeta_t}$, \ie  
  \begin{align}
    \mathbb{E}\left[{\zeta}_{K_1}\right] \geq \mathbb{E}\left[\bar{\zeta}_{K_1}\right] \geq 1 - \frac{\rho}{2} 
    \label{eq:full_assumption}
  \end{align}
Suppose the initialization for GROUSE ($U_0$) is the range of an $n\times d$ matrix with entries being i.i.d standard normal random variables. Then after 
\begin{align}
K &\geq K_1 + K_2 \nn \\
&=\left(\frac{2 d^2}{\rho} + 1\right)\tau_0 \log (n) + 2 d \log \left( \frac{1}{2\rho(1 - \zeta^{\ast})}\right) \nn
\end{align} 
iterations of GROUSE Algorithm~\ref{alg:grouse},  
    \begin{equation}
 \mathbb{P}\left(\zeta_K \geq \zeta^\ast\right) \geq 1 - 2\rho\;, \nn
    \end{equation}
    where $\tau_0 = 1 + \frac{\log \frac{(1 - \rho/2)}{C} + d\log (e/d)}{d\log n}$ with $C$ a constant approximately equal to $1$. 
  \label{thm:global-tmp}
\end{theorem}}

\begin{proof}[{Proof of Theorem \ref{thm:global-tmp}}]
  Let $\kappa_t = 1 - \zeta_t$ denote the determinant \emph{discrepancy} between $R(\bar U)$ and $R(U)$. According to Lemma \ref{lem:exp_mono_zeta} we have the following:
  \begin{subequations}
    \begin{align}
      & \mathbb{E}\left[\frac{\zeta_{t+1}}{\zeta_t}\bigg\lvert U\right] \geq 1 + \frac{1 - \zeta_t}{d} \label{eq:zeta_rate} \\
      & \mathbb{E}\left[\frac{\kappa_{t + 1}}{\kappa_t}\bigg\lvert U\right] \leq 1 - \frac{1 - \kappa_t}{d} \label{eq:eta_rate}
    \end{align} 
  \end{subequations}
  Therefore, the expected convergence rate of $\zeta_t$ is faster when $R(U)$ is far away from $R(\bar U)$, while that of $\kappa_t$ is faster when $R(U)$ is close to $R(\bar U)$. This motivates us to split the analysis into two phases, bounding the number of iterations in each phase. We first use Eq (\ref{eq:zeta_rate}) to get the necessary $K_1$ iterations for GROUSE to converge to a local region of global optimal point from a random initialization. From there, we obtain the necessary  $K_2$ iterations for GROUSE to converge to the required accuracy by leveraging Eq (\ref{eq:eta_rate}).
  
  \new{As in the assumptions, let $\rho$ be any number within the range $(0,1]$. Let $\bar{\zeta_t}$ be a non-decreasing sequence with $\mathbb{E}[\bar{\zeta}_0] = \mathbb{E}[\zeta_0]$ and the expected increase rate being lower bounded as 
  \begin{equation}
    \mathbb{E}\left[\bar{\zeta}_{t + 1}\big\lvert U\right] \geq  \left(1 + \frac{\rho}{2 d}\right)\bar{\zeta}_t \;. \nn 
  \end{equation}
  Taking expectation of both sides, we obtain the following:
  \begin{equation}
    \mathbb{E}\left[\bar{\zeta}_{t+1}\right] \geq \left(1 + \frac{\rho}{2 d}\right)\mathbb{E}[\bar{\zeta}_t]   \nn
  \end{equation}
  Therefore after $K_1 \geq (2 d / \rho + 1)\log\frac{1 - \frac{\rho}{2}}{\mathbb{E}[\zeta_0]}$ steps we have 
  \begin{align}
    \mathbb{E}\left[\bar{\zeta}_{K_1}\right] \geq \left(1 + \frac{\rho}{2 d}\right)^{K_1} \mathbb{E}[\zeta_0] &\geq \left(\left(1 + \frac{\rho}{2 d}\right)^{\frac{2 d}{\rho} + 1}\right)^{\log\frac{1 - \frac{\rho}{2}}{\mathbb{E}[\zeta_0]}} \mathbb{E}[\zeta_0] \nn \\
    &\geq \mathbb{E}[\zeta_0] e^{\log\frac{1 - \frac{\rho}{2}}{\mathbb{E}[\zeta_0]}} = 1 - \frac{\rho}{2}  
  \end{align}
Now we apply the assumption in \eqref{eq:full_assumption}, that the $\zeta_t$ produced by GROUSE converges faster than $\bar{\zeta_t}$.
  Therefore, 
  \begin{align}
      \mathbb{P}\left({\zeta}_{K_1} \geq \frac{1}{2}\right) &= 1 - \mathbb{P}\left(1 - {\zeta}_{K_1} \geq \frac{1}{2}\right) \overset{\vartheta_1}\geq 1 - \frac{\mathbb{E}[1 - {\zeta}_{K_1}]}{1/2} \geq 1 - \rho  
      \label{pf:K1_prob}
  \end{align}
  where $\vartheta_1$ follows by applying Markov inequality to the nonnegative random variable $1 - \bar{\zeta}_{K_1}$.
  }

Now with probability at least $1 - \rho$, $\zeta_t \geq \frac{1}{2}$ for all $t \geq K_1$, \ie $\kappa_t \leq \frac{1}{2}$ for all $t \geq K_1$. So using Eq (\ref{eq:eta_rate}) we have the following:
  \begin{equation}
    \mathbb{E}\left[\kappa_{t + 1}\big\lvert U\right]\leq \left(1 - \frac{1 - \kappa_t}{d}\right)\kappa_t \leq \left(1 - \frac{1}{2 d}\right)\kappa_t  \nn \;.
  \end{equation}
  Taking expectation of both sides, we have
  \begin{equation}
    \mathbb{E}\left[\kappa_{t + 1}\right] \leq \left(1 - \frac{1}{2 d}\right)\mathbb{E}\left[\kappa_t\right]  \nn \;.
  \end{equation}
  After $K_2 \geq 2 d\log\frac{1/2}{\rho(1 - \zeta^{\ast})}\geq 2 d\log\frac{\mathbb{E}[\eta_{K_1}]}{\rho(1 - \zeta^{\ast})}$ additional iterations of GROUSE we obtain
  \begin{align}
    \mathbb{E}\left[\kappa_{t + K_1}\right] \leq \left(1 - \frac{1}{2 d}\right)^{K_2} \mathbb{E}[\kappa_{K_1}] &\leq \left(1 - \frac{1}{2 d}\right)^{2 d\log\frac{\mathbb{E}[\kappa_{K_1}]}{\rho(1 - \zeta^{\ast})}} \mathbb{E}[\kappa_{K_1}] \leq \rho(1 - \zeta^{\ast}) \nn \;.
  \end{align}
  Hence following a similar argument as before we have 
  \begin{align}
    \mathbb{P}\left(\zeta_{K_1 + K_2} \geq \zeta^{\ast}\right) = 1 - \mathbb{P}\left(\kappa_{K_1 + K_2} \geq 1 - \zeta^{\ast}\right) &\geq 1 - \frac{\mathbb{E}\left[\kappa_{K_1 + K_2}\right]}{1 - \zeta^\ast} \geq 1 - \rho \;.
    \label{pf:K2_bound}
  \end{align}
(\ref{pf:K1_prob}) and (\ref{pf:K2_bound}) together complete the proof.
\end{proof}
\new{Although we still need more rigorous analysis to justify our assumption, this proof provides the form of the convergence rate we can expect. We also want to emphasize that the above proof  provides the local convergence rate for GROUSE. Specifically, as is indicated by the proof of the second phase, GROUSE requires at most $2 d\log\frac{1/2}{\rho(1 - \zeta^{\ast})}$ iterations to converge from $\zeta_t = 1/2$ to any required accuracy $\zeta^\ast \in (1/2, 1)$.  }

\subsection{Proof of Undersampled Data}
\label{sec:proof_of_undersampled_data}
In this section, we prove our main results for undersampled data. 
We again start by proving a result for the deterministic expression for the change in determinant similarity from one step of the GROUSE algorithm to the next, in this case a lower bound given by Lemma \ref{lem:det_incr_expr_undersample}.
\begin{proof}[{Proof of Lemma \ref{lem:det_incr_expr_undersample}}]
Note that, 
\begin{subequations}
  \begin{align}
  &w^T(\bar U^TU)^{-1}\bar U^T p = w^T(\bar U^TU)^{-1}\bar U^T U w = \|p\|^2 \label{eq:norm_p} \\
  &w_1^T(\bar U^TU)^{-1}\bar U^T r \overset{\vartheta_1}= s^T\bar U^T U(\bar U^TU)^{-1}\bar U^Tr = v^T A^T\widetilde r \overset{\vartheta_2}= \|\widetilde r\|^2 \label{eq:norm_rtilde} 
\end{align}
\label{eq:rp_norm}
\end{subequations}
where $\vartheta_1$ follows by Lemma \ref{lem:uniq_expr} and $\vartheta_2$ holds since $v^T A^T\widetilde r=v^TA^T\left(\mathbb{I}_m - \mathcal{P}_{AU}\right)\widetilde r = \|\widetilde r\|^2$. As a consequence, we have the following 
\begin{align}
  \det \left(\bar U^T U_{t + 1}\right) &= \det\left(\bar U^T U + \bar U^T\left(\frac{p + r}{\|p + r\|} - \frac{p}{\|p\|}\right)\frac{w^T}{\|w\|}\right) \nn \\
    &\overset{\vartheta_3}= \det (\bar U^T U )  \frac{w^T(\bar U^T U)^{-1}\bar U^T\left(p + r\right)}{\|p\| \sqrt{\|p\|^2 + \|r\|^2}} \nn \\
    &= \det (\bar U^T U )  \frac{\|p\|^2 + \|r\|^2 + \|\widetilde r\|^2 - \|r\|^2 + \Delta}{\|p\| \sqrt{\|p\|^2 + \| r\|^2}} \nn 
\end{align}
 \sloppy{where $\Delta = w_2^T\left(\bar U^TU\right)^{-1}\bar U^T r$; and $\vartheta_3$ follows by the Schur complement $\det\left(M + ab^T\right) = \det(M)\left(1 + b^TM^{-1}a\right)$ for any invertible $M\in \R^{n\times n}$ and $a, b\in\R^{n}$. Hence}
 \begin{equation}
  \frac{\bar{\zeta}_{t+1}}{\zeta_t}= \left(\frac{\det\left(\bar U^T U_{t + 1}\right)}{\det\left(\bar U^TU\right)}\right)^2 \overset{\vartheta_4}\geq 1 + \frac{\|r\|^2}{\|p\|^2} + 2\frac{\left\|\widetilde r\right\| - \|r\|^2}{\|p\|^2} + 2\frac{\Delta}{\|p\|^2}  \nn
 \end{equation}
 where $\vartheta_4$ holds since $(c + d)^2 \geq c^2 + 2 cd$ with $c = \frac{\|p\|^2 + \|r\|^2}{\|p\|\sqrt{\|p\|^2 + \|r\|^2}}$, $d = \frac{\|\widetilde r\|^2 - \|r\|^2 + \Delta}{\|p\| \sqrt{\|p\|^2 + \| r\|^2}}$. 
 \end{proof}
In the following sections, we proceed by establishing the convergence results of missing data and compressively sampled data by bounding the key quantities in Lemma \ref{lem:det_incr_expr_undersample}.

\paragraph{Proof for Compressively Sampled Data} 
We start by showing how the results on the key quantities in Lemmas \ref{lem:rconc_cs}, \ref{lem:cs_rpconc} and \ref{lem:exp_delta_cs} lead to the main result of the compressively sampled data case. 
\begin{proof}[Proof of Theorem \ref{thm:det_convg_csrate}]
  Let $\eta_1 = \frac{1 + \delta}{1 - \delta}\frac{d}{m}$, $\eta_2 = (1 - \delta)\left(1 - 2\delta\sqrt{\frac{m}{n}}\right)$ and $\eta_3 = \tan(\phi_d) + \delta\frac{d}{\cos(\phi_d)}$,  then plugging in the results in Lemma \ref{lem:rconc_cs} to Lemma \ref{lem:exp_delta_cs} into Lemma \ref{lem:det_incr_expr_undersample} with $\delta_1=\delta_2=\delta_3=\delta$ yields,
  \begin{align}
      \frac{\zeta_{t + 1}}{\zeta_t} &\geq 1 + \frac{2\left\|\widetilde r\right\|^2 - \|r\|^2}{\|p\|^2} + 2\frac{\Delta}{\|p\|^2} \nn \\
      &\geq 1 + \frac{1}{\left(1 + \sqrt{\eta_1}\right)^2}\left(\eta_2(1 - \eta_1) - 2\sqrt{\eta_1}\eta_3\right)\frac{m}{n}\frac{\|v_\perp\|^2}{\|v\|^2} \nn \\
      &= 1 + \gamma_1\left(1 - \gamma_2\frac{d}{m}\right)\frac{m}{n}\frac{\|v_\perp\|^2}{\|v\|^2} \nn \\
      &= \left(1 + \gamma_1\left(1 - \gamma_2\frac{d}{m}\right)\frac{m}{n}\right)\frac{1 - \zeta_t}{d}
      \label{eq:prob_bnd_cs}
  \end{align}
  where {$\gamma_2 = \left(1 + 2\frac{\eta_3}{\eta_2\sqrt{\eta_1}}\right)\frac{1 + \delta}{1 - \delta} = \left(1 + 2\frac{\tan(\phi_d) + \delta_3\frac{d}{\cos(\phi_d)}}{\left(1 - 2\delta\sqrt{\frac{m}{n}}\right)\sqrt{(1 - \delta^2)d/m}}\right)\frac{1 + \delta}{1 - \delta}$, $\gamma_1 = \frac{\eta_2}{\left(1 + \sqrt{\eta_1}\right)^2} = \frac{(1 - \delta)\left(1 - 2\delta\sqrt{\frac{m}{n}}\right)}{\left(1 + \sqrt{\frac{1 + \delta}{1 - \delta}\frac{d}{m}}\right)^2}$,} and the last equality follows from Lemma \ref{lem:key_quantity}. 

  The probability bound is obtained by taking the union bound of those quantities (in Lemma \ref{lem:subrandmap}, Lemma \ref{lem:cs_randSub_proj}, Lemma \ref{thm:singular_val}, Corollary \ref{coro:subsp_union_bnd}, Lemma \ref{lem:Ubar_vperp})  used to generate Lemma \ref{lem:rconc_cs} to Lemma \ref{lem:exp_delta_cs}.  As we can see, this union bound is 
  {\small\begin{align}
      &1 - \exp\left(-\frac{m\delta^2}{2}\right) - \exp\left(-\frac{d\delta^2}{8}\right) - \exp\left(-\frac{m\delta^2}{32}+ d\log\left(\frac{24}{\delta}\right)\right) - (4d + 1)\exp\left(-\frac{m\delta^2}{8}\right) \nn \\
      &\qquad > 1 - \exp\left(-\frac{d\delta^2}{8}\right) - \exp\left(-\frac{m\delta^2}{32}+ d\log\left(\frac{24}{\delta}\right)\right) - (4d + 2)\exp\left(-\frac{m\delta^2}{8}\right) 
      \label{eq:cs_thm_prob_bnd}
    \end{align}}
To get the complexity bound on $m$, let $\varepsilon = \tan(\phi_d)$, $\alpha_1 = \varepsilon + \delta\sqrt{1 + \varepsilon^2}d$, $\alpha_2 = \frac{1 + \delta}{1 - \delta}$ and $\alpha_3 = \left(1 - 2\delta\sqrt{\frac{m}{n}}\right)\sqrt{1 + \delta}$, then according to \ref{eq:det_incr_cs_proof} we have $\gamma_2\frac{d}{m}<\frac{1}{2}$ is equivalent to the following,
    \begin{align}
      &\alpha_2 d + \frac{2\alpha_1\alpha_2\sqrt{d}}{\alpha_3}\sqrt{m} < \frac{m}{2} \nn \\
      &\Leftrightarrow \left(\sqrt{\frac{m}{2}} - \frac{\alpha_1\alpha_2\sqrt{d}}{\alpha_3}\right)^2 > \left(\alpha_2 + \frac{\alpha_1^2 \alpha_2^2}{\alpha_3^2}\right)d \nn \\
      &\overset{\vartheta_1}\Leftarrow m \geq 8 \frac{\alpha_1^2 \alpha_2^2}{\alpha_3^2}d + 4\sqrt{\alpha_2}\frac{\alpha_1\alpha_2}{\alpha_3}d \nn \\
      &\overset{\vartheta_2}\Leftarrow m \geq \beta \left(\varepsilon + \delta\sqrt{1 + \varepsilon^2}d\right)\left(\varepsilon + \delta\sqrt{1 + \varepsilon^2}d + \frac{1}{2} \right) d
      \label{m_bnd_1}
    \end{align}
    where $\vartheta_1$ follows from $\sqrt{\left(\alpha_2 + \frac{\alpha_1^2 \alpha_2^2}{\alpha_3^2}\right)d} < \sqrt{\alpha_2 d}  + \frac{\alpha_1 \alpha_2}{\alpha_3}\sqrt{d}$; and $\vartheta_2$ follows by $\beta =\frac{8(1+\delta)}{(1-\delta)^2\left(1 - 2\delta\right)^2}$.

    To establish another bound on $m$ we can see that $m\geq \frac{32}{\delta^2}\log\left(\frac{24 n^{2/d}}{\delta}\right)d$ implies the following, 
    \begin{align}
      &\exp\left(-\frac{m\delta^2}{32} + d\log\left(\frac{24}{\delta}\right)\right) \leq \exp(-\log{n^2}) = \frac{1}{n^2} \label{prob_bnd_1} \\
      &(4d + 2)\exp\left(-\frac{m\delta^2}{8}\right) \leq \frac{(4d+2)}{n^8}\left(\frac{\delta}{24}\right)^{4d} \rightarrow 0 \label{prob_bnd_2}
    \end{align}
    Eqs (\ref{prob_bnd_1}) and (\ref{prob_bnd_2}) complete the proof for the bound on $m$ and justify the simplification of the probability bound in Eq (\ref{eq:cs_thm_prob_bnd}).
\end{proof}

Next we are going to prove the intermediate lemmas in Section \ref{sec:compressively_sampled_data}, \ie bound the key quantities in Lemma \ref{lem:det_incr_expr_undersample}, for which we need the following concentration results. 
\begin{lemma}
  Let $A\in \R^{m\times n}$ with entries being i.i.d Gaussian random variables distributed as $\mathcal{N}(0, 1/n)$, $v\in \R^n$ is an vector. Then for any $\delta \in (0, 1)$, with probability at least $1 - 2 \exp^{-m\delta^2/8}$, we have
  \begin{align}
      &\mathbb{P}\left(\|A v\|_2^2 > (1 + \delta) \frac{m}{n}\|v\|_2^2\right) < \exp\left(-\frac{m\delta^2}{8}\right) \;, \nn \\
      &\mathbb{P}\left(\|A v\|_2^2 < (1 - \delta) \frac{m}{n}\|v\|_2^2\right) < \exp\left(-\frac{m\delta^2}{8}\right) \;. \nn
   \end{align} 
   \label{lem:subrandmap}
\end{lemma}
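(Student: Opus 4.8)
The plan is to reduce $\|Av\|_2^2$ to a chi-squared random variable and then apply the standard Chernoff (moment generating function) method. First I would fix $v$ and observe that each coordinate $(Av)_i = \sum_{j=1}^n A_{ij} v_j$ is a linear combination of independent $\mathcal{N}(0,1/n)$ variables, hence $(Av)_i \sim \mathcal{N}(0, \|v\|_2^2/n)$, and the $m$ coordinates are mutually independent because they involve disjoint rows of $A$. Writing $(Av)_i = \tfrac{\|v\|_2}{\sqrt n}\,Z_i$ with $Z_i$ i.i.d.\ standard normal gives
\[
  \frac{n}{\|v\|_2^2}\,\|Av\|_2^2 \;=\; \sum_{i=1}^m Z_i^2 \;=:\; Y \;\sim\; \chi^2_m,
\]
so the two events in the statement are exactly $\{Y > (1+\delta)m\}$ and $\{Y < (1-\delta)m\}$, and it suffices to prove the corresponding one-sided chi-squared tail bounds and then combine them by a union bound.

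For the upper tail I would use the chi-squared moment generating function $\mathbb{E}[e^{tZ_i^2}] = (1-2t)^{-1/2}$ valid for $t \in (0,\tfrac12)$, so that Markov's inequality gives $\mathbb{P}(Y > (1+\delta)m) \le e^{-t(1+\delta)m}(1-2t)^{-m/2}$. Minimizing the exponent over $t$ yields the optimizer $t^\ast = \tfrac12\tfrac{\delta}{1+\delta}$ and the bound $\exp\!\big(\tfrac{m}{2}(\log(1+\delta)-\delta)\big)$. The lower tail is handled symmetrically using $\mathbb{E}[e^{-tZ_i^2}] = (1+2t)^{-1/2}$, with optimizer $t^\ast = \tfrac12\tfrac{\delta}{1-\delta}$ and bound $\exp\!\big(\tfrac{m}{2}(\log(1-\delta)+\delta)\big)$.

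The final step is to reduce both exponents to the advertised $-m\delta^2/8$, which amounts to the two elementary inequalities $\delta - \log(1+\delta) \ge \tfrac{\delta^2}{4}$ and $-\delta - \log(1-\delta) \ge \tfrac{\delta^2}{4}$ on $(0,1)$. For the first I would set $\phi(\delta) = \delta - \log(1+\delta) - \tfrac{\delta^2}{4}$, note $\phi(0)=0$, and compute $\phi'(\delta) = \tfrac{\delta(1-\delta)}{2(1+\delta)} > 0$ on $(0,1)$, so $\phi \ge 0$ there. The second follows immediately from the series $-\delta - \log(1-\delta) = \sum_{k\ge 2}\delta^k/k \ge \delta^2/2$. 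Each tail probability is then at most $\exp(-m\delta^2/8)$, and a union bound gives the two-sided containment $(1-\delta)\tfrac{m}{n}\|v\|_2^2 \le \|Av\|_2^2 \le (1+\delta)\tfrac{m}{n}\|v\|_2^2$ with probability at least $1 - 2\exp(-m\delta^2/8)$.

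The only real subtlety lies in the constant bookkeeping of the last paragraph: a crude Taylor truncation of $\log(1+\delta)$ only secures the upper-tail inequality for $\delta$ up to roughly $3/4$, so I would deliberately avoid truncation and instead use the monotonicity/derivative argument above, which covers the full range $\delta \in (0,1)$. Everything else is a textbook application of the Chernoff method; the key structural point that makes it clean is the exact independence of the coordinates of $Av$, which renders $Y$ an exact $\chi^2_m$ and permits the closed-form moment generating function rather than a mere sub-exponential estimate.
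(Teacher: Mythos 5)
Your proof is correct and follows essentially the same route as the paper: both reduce $\|Av\|_2^2$ to an exact $\chi^2_m$ random variable via the observation that the coordinates of $Av$ are i.i.d.\ $\mathcal{N}(0,\|v\|_2^2/n)$, and then apply the two-sided chi-squared tail bound $\exp(-m\delta^2/8)$. The only difference is that the paper simply cites this concentration inequality (from Wainwright's notes), whereas you derive it from scratch by the Chernoff method; your optimization of the moment generating function bound and the elementary inequalities $\delta-\log(1+\delta)\ge\delta^2/4$ and $-\delta-\log(1-\delta)\ge\delta^2/2$ on $(0,1)$ are all correct, so your version is simply a self-contained rendering of the same argument.
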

\begin{proof}
Note that $Av$ is a random vector with i.i.d entries distributing as $\mathcal{N}\left(0, \|v\|_2^2/n\right)$. Therefore, $\frac{n\left\|Av\right\|_2^2}{\|v\|_2^2}$ is a chi-squared distribution with $m$ degrees of freedom, which yields,
\begin{align}
  & \mathbb{P}\left[\frac{n\left\|Av\right\|_2^2}{m\|v\|_2^2} - 1 > \delta\right] < \exp\left(-m\delta^2/8\right) \nn \\
  &\mathbb{P}\left[\frac{n\left\|Av\right\|_2^2}{m\|v\|_2^2} - 1 < -\delta\right] < \exp\left(-m\delta^2/8\right) \nn
\end{align}
\end{proof}

\begin{lemma}
  Let $A\in \R^{m\times n}$ be a random matrix whose entries are independent and identically distributed Gaussian random variables with mean zero, and variance $\gamma$ . Let $z_1, z_2 \in \R^n$ such that $z_1 \perp z_2$, then $Az_1$ and $Az_2$ are independent of each other.
  \label{lem:gauss_induced_indep} 
\end{lemma}
\begin{proof}
   Let $a_i^T$ denote the $i^{th}$ row of $A$ and $M = Az_1z_2^TA^T$. Then we have 
   \begin{align}
     &\mathbb{E}[M]_{ii} = \mathbb{E}\left[a_i^Tz_1 z_1^Ta_i\right] = z_1^T \mathbb{E}[a_ia_i^T]z_2 = \gamma z_1^Tz_2 = 0 \nn \\
     &\mathbb{E}[M]_{ij} = \mathbb{E}\left[a_i^Tz_1 z_1^Ta_j\right] = z_1^T \mathbb{E}[a_ia_j^T]z_2 =  0 \nn
   \end{align}
   Therefore $Az_1$ and $Az_2$ are uncorrelated. This together with the fact that both $Az_1$ and $Az_2$ are Gaussian distributed random vectors imply that $Az_1$ and $Az_2$ are independent.
 \end{proof} 

\begin{lemma}[\cite{vershynin2010introduction}, Corollary 5.35]
  Let $A$ be an $n\times m$ matrix ($n \geq m$) whose entries are independent standard normal random variables. Then for every $h\geq 0$, with probability at least $1 - 2\exp\left(-  h^2/2\right)$ one has
  \begin{equation}
     \sqrt{n} - \sqrt{m} - h \leq \sigma_{\text{min}}(A) \leq \sigma_{\text{max}}(A) \leq \sqrt{n} + \sqrt{m} + h 
   \end{equation} 
where $\sigma_{\text{min}}, \sigma_{\text{max}}$ denote the smallest and largest singular values of $A$. 
\label{thm:singular_val}
\end{lemma}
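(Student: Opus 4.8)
The plan is to derive this from two standard pillars of non-asymptotic random matrix theory: a sharp control of the expected extreme singular values via a Gaussian comparison inequality, followed by Gaussian concentration of measure to turn the expectation bounds into high-probability bounds. Since $A$ has i.i.d.\ standard normal entries, I would regard $A$ as a single standard Gaussian vector in $\R^{nm}$ throughout.

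First I would express the singular values variationally as
\[
\sigma_{\max}(A) = \max_{x\in S^{m-1}}\max_{y\in S^{n-1}} \langle Ax, y\rangle, \qquad \sigma_{\min}(A) = \min_{x\in S^{m-1}}\max_{y\in S^{n-1}} \langle Ax, y\rangle,
\]
where $S^{k-1}$ is the unit sphere in $\R^k$. Both are extremes of the centered Gaussian process $X_{x,y} := \langle Ax, y\rangle$ indexed by $(x,y)\in S^{m-1}\times S^{n-1}$. To bound their expectations I would introduce the auxiliary process $Y_{x,y} := \langle g, x\rangle + \langle h, y\rangle$ with $g\in\R^m$ and $h\in\R^n$ independent standard Gaussian vectors, and verify the increment inequalities required by Slepian's inequality (for the outer $\max$) and its min--max refinement, Gordon's theorem (for the $\min$--$\max$ structure of $\sigma_{\min}$); these reduce to a direct covariance computation on the product of spheres. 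The comparison then yields $\E[\sigma_{\max}(A)] \le \E\|g\|_2 + \E\|h\|_2$ and $\E[\sigma_{\min}(A)] \ge \E\|h\|_2 - \E\|g\|_2$, and elementary bounds on the expected norm of a standard Gaussian vector in $\R^k$ give $\E[\sigma_{\max}(A)]\le \sqrt{n}+\sqrt{m}$ and $\E[\sigma_{\min}(A)]\ge \sqrt{n}-\sqrt{m}$.

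Second, I would observe that $A\mapsto \sigma_{\max}(A)$ and $A\mapsto\sigma_{\min}(A)$ are each $1$-Lipschitz with respect to the Frobenius norm on $\R^{n\times m}$, since Weyl's perturbation inequality for singular values gives $|\sigma_i(A)-\sigma_i(B)|\le \|A-B\|_{\mathrm{op}}\le \|A-B\|_F$. Applying the Gaussian concentration inequality for Lipschitz functions to each map gives, for every $h\ge 0$,
\[
\bP\big(\sigma_{\max}(A) > \E[\sigma_{\max}(A)] + h\big) \le \exp(-h^2/2), \qquad \bP\big(\sigma_{\min}(A) < \E[\sigma_{\min}(A)] - h\big) \le \exp(-h^2/2).
\]
Combining these tail bounds with the expectation estimates from the first step, and taking a union bound over the two events, yields $\sqrt{n}-\sqrt{m}-h \le \sigma_{\min}(A) \le \sigma_{\max}(A) \le \sqrt{n}+\sqrt{m}+h$ with probability at least $1-2\exp(-h^2/2)$.

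I expect the main obstacle to be the expectation step, specifically setting up Gordon's theorem correctly: Slepian-type comparisons are sensitive to the direction of the covariance inequalities, and the $\min$--$\max$ structure needed for $\sigma_{\min}(A)$ requires the stronger Gordon inequality rather than plain Slepian, together with the slightly delicate fact that $\E\|h\|_2 - \E\|g\|_2 \ge \sqrt{n}-\sqrt{m}$. The concentration step, by contrast, is essentially mechanical once the $1$-Lipschitz property is in hand.
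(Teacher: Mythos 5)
The paper does not actually prove this lemma; it imports it verbatim from the cited reference (Vershynin's Corollary 5.35), and your plan is precisely the argument given there: Slepian/Gordon comparison against the decoupled process $\langle g,x\rangle + \langle h,y\rangle$ to get $\mathbb{E}[\sigma_{\max}(A)]\le\sqrt{n}+\sqrt{m}$ and $\mathbb{E}[\sigma_{\min}(A)]\ge\sqrt{n}-\sqrt{m}$, then Gaussian concentration for the $1$-Lipschitz maps $\sigma_{\min},\sigma_{\max}$ and a union bound over the two tail events. Your proposal is correct and matches that standard proof, including correctly flagging the one delicate point (the direction of the Gordon comparison and the inequality $\mathbb{E}\|h\|_2-\mathbb{E}\|g\|_2\ge\sqrt{n}-\sqrt{m}$), so nothing needs to be added.
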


With the above results, we are able to call out the following intermediate result to quantify $\left\|\mathcal{P}_{AU}(Av_\perp)\right\|_2^2$, which is a key quantity that will be used for proving  Lemmas \ref{lem:rconc_cs}, \ref{lem:cs_rpconc} and \ref{lem:exp_delta_cs}.
\begin{lemma}
  Let $A\in \R^{m\times n}$ with entries being i.i.d Gaussian random variables distributed as $\mathcal{N}(0, 1/n)$, then for any $\delta\in (0,1)$ we have 
  \begin{equation}
    \left\|\mathcal{P}_{AU} A v_\perp\right\|_2^2 \leq (1 + \delta)\frac{d}{n}\|v_\perp\|_2^2 \nn
  \end{equation}
  hold with probability at least $ 1 - \exp\left(-\frac{d\delta^2}{8}\right)$.
  \label{lem:cs_randSub_proj}
\end{lemma}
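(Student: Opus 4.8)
The plan is to exploit the orthogonality $v_\perp \perp R(U)$ together with the rotational symmetry of the Gaussian sampling matrix to decouple the random vector $Av_\perp$ from the random projector $\mathcal{P}_{AU}$. First I would note that, since $U$ has orthonormal columns and $v_\perp$ is the residual of $v$ onto $R(U)$, the vector $v_\perp$ is orthogonal to every column of $U$. Appending $v_\perp/\|v_\perp\|$ to the columns of $U$ yields an orthonormal set, and the same ``uncorrelated plus jointly Gaussian implies independent'' argument used in Lemma \ref{lem:gauss_induced_indep} shows that $Av_\perp$ is jointly independent of $Au_1,\dots,Au_d$. Consequently $Av_\perp$ is independent of the whole matrix $AU$, and therefore independent of the orthogonal projector $\mathcal{P}_{AU}$, whose range $R(AU)$ is a random subspace of dimension at most $d$.

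Next I would identify the conditional distribution. Because $A$ has i.i.d.\ $\mathcal{N}(0,1/n)$ entries, each coordinate of $Av_\perp$ is $\mathcal{N}(0,\|v_\perp\|^2/n)$ and the coordinates are independent, so $Av_\perp \sim \mathcal{N}\!\left(0,\tfrac{\|v_\perp\|^2}{n}\mathbb{I}_m\right)$ is isotropic. Conditioning on $\mathcal{P}_{AU}$ (equivalently on $R(AU)$), the independence established above lets me treat $Av_\perp$ as a fixed isotropic Gaussian projected onto a fixed subspace of dimension $d' \le d$. By rotational invariance, $\|\mathcal{P}_{AU}Av_\perp\|^2$ is then distributed as $\tfrac{\|v_\perp\|^2}{n}\chi^2_{d'}$, with mean at most $\tfrac{d}{n}\|v_\perp\|^2$.

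Finally I would apply the one-sided chi-squared tail bound already invoked inside Lemma \ref{lem:subrandmap}, namely $\mathbb{P}[\chi^2_d/d - 1 > \delta] < \exp(-d\delta^2/8)$, to conclude
$$\mathbb{P}\left(\|\mathcal{P}_{AU}Av_\perp\|^2 > (1+\delta)\tfrac{d}{n}\|v_\perp\|^2\right) \le \exp\!\left(-\tfrac{d\delta^2}{8}\right),$$
using that stochastic domination of $\chi^2_{d'}$ by $\chi^2_d$ preserves the upper tail. The only step deserving care — and the main obstacle — is the independence claim: it is what converts the correlated quantity ``project a Gaussian onto a subspace built from the same Gaussian'' into a clean chi-squared, and it hinges entirely on $v_\perp$ being \emph{exactly} orthogonal to $R(U)$, so that the columns feeding $\mathcal{P}_{AU}$ share no component with $v_\perp$. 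Everything after conditioning is a routine chi-squared concentration computation and does not even require $AU$ to have full column rank.
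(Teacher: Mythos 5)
Your proposal is correct and follows essentially the same route as the paper's proof: establish independence of $Av_\perp$ and $AU$ via the Gaussian orthogonality argument of Lemma \ref{lem:gauss_induced_indep}, then use rotational invariance of the isotropic vector $Av_\perp$ to reduce $\|\mathcal{P}_{AU}(Av_\perp)\|_2^2$ to a scaled chi-squared with $d$ degrees of freedom, and finish with the same tail bound used in Lemma \ref{lem:subrandmap}. Your added care about joint (rather than pairwise) independence and about $\dim R(AU)$ possibly being less than $d$ only tightens details the paper leaves implicit; it does not change the argument.
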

\begin{proof}
Note that $Av_\perp$ is a Gaussian random vector with i.i.d entries distributed as $\mathcal{N}\left(0, \|v_\perp\|_2^2/n\right)$, and $AU$ is a Gaussian random matrix with i.i.d entries distributed as $\mathcal{N}\left(0, 1/n\right)$. Then according to Lemma \ref{lem:gauss_induced_indep}, $AU$ and $Av_\perp$ are independent of each other.  Therefore, $y = \mathcal{P}_{AU}(Av_\perp)$ is the projection of $Av_\perp$ onto a independent random $d$-dimensional subspace. According to the rotation invariance property of $Av_\perp$, $\left\|\mathcal{P}_{AU}(Av_\perp)\right\|$ is equivalent to the length of projecting $Av_\perp$ onto its first $d$ coordinates. Hence,
\begin{align}
  \mathbb{P}\left(\left\|\mathcal{P}_{AU}(Av_\perp)\right\|_2^2 = \sum_{k = 1}^{d} y_k^2 \leq (1 + \delta)\frac{d}{n}\|v_\perp\|_2^2\right) \geq 1 - \exp\left(-\frac{d\delta^2}{8}\right)
\end{align}
Similar to the proof for Lemma \ref{lem:subrandmap}, here the probability bound is followed from the concentration bound for Chi-squared distribution with degree $d$. 
\end{proof}

Now we start by proving that Lemma \ref{lem:rconc_cs} follows directly from Lemma \ref{lem:subrandmap} and Lemma \ref{thm:singular_val}. 

\begin{proof}[{Proof of Lemma \ref{lem:rconc_cs}}]
According to Lemmas \ref{lem:subrandmap} and \ref{lem:cs_randSub_proj}, we have
  \begin{align}
    \|\widetilde r\|_2^2 = \|\left(\mathbb{I}_m - \mathcal{P}_{AU}\right)Av_\perp\|_2^2 
    &= \|Av_\perp\|_2^2 - \|\mathcal{P}_{AU}(Av_\perp)\|_2^2 \nn \\
    &\geq (1 - \delta_1)\frac{m}{n}\|v_\perp\|_2^2 - (1 + \delta_1)\frac{d}{n}\|v_\perp\|_2^2  \nn \\
    &= (1 - \delta_1)\left(1 - \frac{1 + \delta_1}{1 - \delta_1}\frac{d}{m}\right)\frac{m}{n}\|v_\perp\|_2^2
    \label{eq:csconc_bnd1}
  \end{align}
hold with probability at least $1 - \exp\left(-\frac{m\delta_1^2}{8}\right) - \exp\left(-\frac{d\delta_1^2}{8}\right)$. As for the second part of Lemma \ref{lem:rconc_cs}, we have 
  \begin{align}
    2\|\widetilde r\|_2^2 - \|r\|_2^2 = 2\|\widetilde r\|_2^2 - \|A^T \widetilde r\|_2^2 
    &\geq (2 - \sigma_{\text{max}}^2(A^T)) \|\widetilde r\|_2^2 \nn\\
    &\overset{\vartheta_1}\geq \left(1 - 2\delta_2\sqrt{\frac{m}{n}}\right)\|\widetilde r\|_2^2 \nn \\
    &\geq \left(1 - 2\delta_2\sqrt{\frac{m}{n}}\right)(1 - \delta_1)\left(1 - \frac{1 + \delta_1}{1 - \delta_1}\frac{d}{m}\right)\frac{m}{n}\|v_\perp\|_2^2
  \end{align}
here $\vartheta_1$ follows from Lemma \ref{thm:singular_val} with $A_{ij}\sim \mathcal{N}(0, 1/n)$ and $h = \delta\sqrt{m/n}$. The probability bound $1 - \exp\left(-\frac{m\delta_1^2}{8}\right) - \exp\left(-\frac{d\delta_1^2}{8}\right) - \exp\left(-\frac{m\delta_2^2}{2}\right)$ is obtained by taking the union bound over \ref{eq:csconc_bnd1} and $\vartheta_1$.  
\end{proof}

To prove  Lemma \ref{lem:cs_rpconc} and Lemma \ref{lem:exp_delta_cs}, we need the following extra results which are implied by Lemma \ref{lem:subrandmap}. The corresponding proofs are provided at the end of this section. 
\begin{corollary}
  Under the conditions of Lemma \ref{lem:subrandmap}, for $x, y \in \R^n$ and $\delta$, with probability exceeding $1 - 4e^{-m\delta^2/8}$ we have 
  \begin{equation}
     \frac{m}{n}\left(x^T y - \delta \|x\| \|y\|\right) \leq x^T A^T A y \leq \frac{m}{n}\left(x^T y + \delta \|x\| \|y\|\right) \nonumber
  \end{equation}
  \label{lem:innerProd}
\end{corollary}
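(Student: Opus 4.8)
The plan is to reduce the bilinear quantity $x^T A^T A y = \langle Ax, Ay\rangle$ to the norm concentration already established in Lemma \ref{lem:subrandmap} by means of the polarization identity. First I would dispose of the trivial cases $x = 0$ or $y = 0$, where both sides vanish identically, and otherwise pass to the normalized vectors $\hat x = x/\|x\|$ and $\hat y = y/\|y\|$. Since $A$ acts linearly, $\langle Ax, Ay\rangle = \|x\|\|y\|\,\langle A\hat x, A\hat y\rangle$, so it suffices to prove the claim for unit vectors and then multiply through by $\|x\|\|y\|$ at the very end.

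For unit vectors the polarization identity reads
\begin{equation}
\langle A\hat x, A\hat y\rangle = \frac{1}{4}\left(\|A(\hat x + \hat y)\|^2 - \|A(\hat x - \hat y)\|^2\right) \nn
\end{equation}
and, crucially, $\|\hat x \pm \hat y\|^2 = 2 \pm 2\,\hat x^T\hat y$ because both vectors have unit norm. I would then apply Lemma \ref{lem:subrandmap} separately to $\hat x + \hat y$ and $\hat x - \hat y$. For the upper bound on $\langle A\hat x, A\hat y\rangle$ I use the upper tail of $\|A(\hat x+\hat y)\|^2$ together with the lower tail of $\|A(\hat x - \hat y)\|^2$; after substituting the identities for $\|\hat x \pm \hat y\|^2$, the $\delta$-free part collapses to $4\,\hat x^T\hat y$ and the $\delta$-part to $4\delta$, yielding $\langle A\hat x, A\hat y\rangle \leq \frac{m}{n}(\hat x^T\hat y + \delta)$. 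The lower bound follows symmetrically by swapping which tail is invoked on each of the two vectors. Multiplying by $\|x\|\|y\|$ and using $\|x\|\|y\|\,\hat x^T\hat y = x^Ty$ recovers exactly the stated two-sided inequality.

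The probability bound then comes from a union bound over the four tail events used above, namely the upper and lower deviations of $\|A(\hat x + \hat y)\|^2$ and of $\|A(\hat x - \hat y)\|^2$, each failing with probability at most $\exp(-m\delta^2/8)$ by Lemma \ref{lem:subrandmap}. Hence all four hold simultaneously with probability exceeding $1 - 4\exp(-m\delta^2/8)$, as claimed.

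The one point requiring care --- and the reason the conclusion carries the clean cross term $\delta\|x\|\|y\|$ rather than the cruder $\frac{\delta}{2}(\|x\|^2 + \|y\|^2)$ --- is that polarization must be applied to the \emph{normalized} vectors. If one polarizes $x$ and $y$ directly, the deviation term is scaled by $\|x \pm y\|^2$, which on expansion leaves $\frac{\delta}{2}(\|x\|^2+\|y\|^2)$; normalizing first forces $\|\hat x \pm \hat y\|^2 = 2 \pm 2\hat x^T\hat y$, so the deviation budget is the pure constant $\delta$ before rescaling, and this is precisely what produces the tight cross term after the final multiplication by $\|x\|\|y\|$. Beyond this bookkeeping there is no real analytic obstacle, since the entire probabilistic content is already supplied by Lemma \ref{lem:subrandmap}.
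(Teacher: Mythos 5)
Your proposal is correct and follows essentially the same route as the paper: polarize the normalized vectors $x/\|x\|$ and $y/\|y\|$, apply Lemma \ref{lem:subrandmap} to the sum and difference vectors (pairing the appropriate upper and lower tails for each side of the inequality), and union-bound the four tail events to get the $1 - 4\exp(-m\delta^2/8)$ guarantee. Your closing remark about why one must normalize before polarizing --- to obtain the cross term $\delta\|x\|\|y\|$ rather than $\tfrac{\delta}{2}(\|x\|^2+\|y\|^2)$ --- is exactly the point implicit in the paper's choice to work with $\frac{x^TA^TAy}{\|x\|\|y\|}$.
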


\begin{corollary}
  Under the condition of Lemma \ref{lem:subrandmap}, for any vector $v\in R(U)$ we have
  \begin{align}
      &\mathbb{P}\left(\|A v\|_2^2 > (1 + \delta) \frac{m}{n}\|v\|_2^2\right) < \exp\left(-\frac{m\delta^2}{32} - d\log(\delta) + d\log(24) \right) \;, \nn \\
      &\mathbb{P}\left(\|A v\|_2^2 < (1 - \delta) \frac{m}{n}\|v\|_2^2\right) < \exp\left(-\frac{m\delta^2}{32}- d\log(\delta) + d\log(24)\right) \;. \nn
  \end{align}
  \label{coro:subsp_union_bnd}
\end{corollary}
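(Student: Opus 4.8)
The plan is to prove Corollary \ref{coro:subsp_union_bnd} by upgrading the pointwise concentration of Lemma \ref{lem:subrandmap} to a statement that holds \emph{uniformly} over the $d$-dimensional subspace $R(U)$, via a covering (\ie $\epsilon$-net) argument. By homogeneity of both sides of the claimed inequalities in $v$, it suffices to control $\|Av\|_2^2$ for every unit vector $v\in R(U)$; equivalently, to bound $\sup_{\|v\|=1,\,v\in R(U)}\|Av\|_2^2$ from above and $\inf_{\|v\|=1,\,v\in R(U)}\|Av\|_2^2$ from below. The $d\log(24/\delta)$ contribution in the exponent is exactly the logarithm of a net cardinality, which is what signals that a covering argument is the right tool.

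First I would fix $\epsilon\in(0,1)$ and choose a minimal $\epsilon$-net $\mathcal{N}$ of the unit sphere in $R(U)$, which by the standard volumetric estimate has cardinality $|\mathcal{N}|\leq(1+2/\epsilon)^d\leq(3/\epsilon)^d$. Applying Lemma \ref{lem:subrandmap} to each net point $q\in\mathcal{N}$ with a parameter $\delta_0$ and taking a union bound, I obtain that with probability at least $1-(3/\epsilon)^d\exp(-m\delta_0^2/8)$ every net point satisfies $\|Aq\|_2^2\leq(1+\delta_0)\frac{m}{n}$, and analogously $\|Aq\|_2^2\geq(1-\delta_0)\frac{m}{n}$ using the lower tail. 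Next I would transfer the bound from $\mathcal{N}$ to the whole sphere. Writing $M:=\sup_{\|v\|=1,\,v\in R(U)}\|Av\|_2$, for any unit $v\in R(U)$ there is $q\in\mathcal{N}$ with $\|v-q\|\leq\epsilon$; since $v-q\in R(U)$ as well, $\|A(v-q)\|_2\leq\epsilon M$, so $\|Av\|_2\leq\|Aq\|_2+\epsilon M$. Taking the supremum yields the self-bounding inequality $M\leq\sqrt{(1+\delta_0)m/n}+\epsilon M$, hence $M\leq\sqrt{(1+\delta_0)m/n}/(1-\epsilon)$ and therefore $\|Av\|_2^2\leq\frac{1+\delta_0}{(1-\epsilon)^2}\frac{m}{n}$ uniformly. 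The lower bound follows the same route: $\|Av\|_2\geq\|Aq\|_2-\epsilon M\geq\sqrt{(1-\delta_0)m/n}-\epsilon M$, where the already-established upper bound on $M$ is reused to control the error term.

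Finally I would calibrate the two free parameters to reproduce the stated exponent. Choosing $\epsilon=\delta/8$ gives $|\mathcal{N}|\leq(24/\delta)^d=\exp(d\log 24-d\log\delta)$, and choosing $\delta_0=\delta/2$ produces the tail factor $\exp(-m(\delta/2)^2/8)=\exp(-m\delta^2/32)$; a one-line computation using $\delta<1$ verifies $\frac{1+\delta_0}{(1-\epsilon)^2}\leq 1+\delta$, so the uniform bound indeed reads $(1+\delta)\frac{m}{n}$. Multiplying the net cardinality by the one-sided per-point tail probability then gives the claimed bound $\exp(-m\delta^2/32-d\log\delta+d\log 24)$, and symmetrically for the lower tail. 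I expect the extension step to be the main obstacle: the lower-tail estimate is coupled to the upper-tail estimate through the operator norm $M$, so the two directions cannot be handled in complete isolation, and the constants $\epsilon=\delta/8$, $\delta_0=\delta/2$ must be chosen so that the net-size blowup and the per-point concentration loss balance precisely, reproducing the factor $24$ and the exponent $1/32$ rather than merely an $O(\cdot)$ version of them.
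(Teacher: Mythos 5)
Your proposal is correct and follows essentially the same route as the paper's own proof: a covering-number argument over the unit sphere of $R(U)$ with exactly the same calibration (an $\epsilon$-net of radius $\delta/8$ giving $(24/\delta)^d$ points, per-point application of Lemma \ref{lem:subrandmap} at level $\delta/2$ giving the $\exp(-m\delta^2/32)$ tail), the same self-bounding extremal argument for the supremum (your $M$ is the paper's $\sqrt{1+\gamma}\sqrt{m/n}$), and the same coupling of the lower tail to the already-established upper bound. No substantive differences to report.
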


Given Lemma \ref{lem:innerProd} and Corollary \ref{coro:subsp_union_bnd}, we prove Lemma \ref{lem:cs_rpconc} and Lemma \ref{lem:exp_delta_cs} by first proving the following intermediate results to bound the key components of $p$ and $\Delta$.
\begin{lemma}
Let $w_2 = \left(U^TA^TAU\right)^{-1}U^TA^TAv_\perp$, then 
  \begin{align}
    &\mathbb{P}\left(\|w_2\| \leq \sqrt{\frac{1 + \delta_1}{1 - \delta_2}\frac{d}{m}}\|v_\perp\|\right) \nn \\
    &\qquad \geq 1 - \exp\left(-\frac{d\delta_1^2}{8}\right) - \exp\left(-\frac{m\delta_2^2}{8}- d\log(\delta_2) + d\log(24)\right) \nn
  \end{align}
  \label{lem:w2_bnd}
\end{lemma}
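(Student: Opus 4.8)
The plan is to recognize $w_2$ as the coefficient vector of the least-squares projection of $A v_\perp$ onto the range of $AU$, and then to bound $\|w_2\|$ by the length of that projection divided by the smallest singular value of $AU$. Writing $B := AU \in \R^{m\times d}$, we have $U^TA^TAU = B^TB$ and $U^TA^TA v_\perp = B^T(A v_\perp)$, so that $w_2 = (B^TB)^{-1}B^T(A v_\perp) = B^\dagger (A v_\perp)$. Consequently $B w_2 = B(B^TB)^{-1}B^T (A v_\perp) = \mathcal{P}_{AU}(A v_\perp)$, and since $\|B w_2\| \geq \sigma_{\min}(B)\,\|w_2\|$ whenever $B$ has full column rank, I obtain the deterministic bound
\[
\|w_2\| \leq \frac{\|\mathcal{P}_{AU}(A v_\perp)\|}{\sigma_{\min}(AU)}.
\]
It then remains to upper bound the numerator and lower bound the denominator, each with high probability over the random matrix $A$.

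For the numerator I would simply invoke Lemma \ref{lem:cs_randSub_proj}: since $v_\perp \perp R(U)$, Lemma \ref{lem:gauss_induced_indep} makes $A v_\perp$ independent of $AU$, so $\mathcal{P}_{AU}(A v_\perp)$ is the projection of an isotropic Gaussian vector onto an independent $d$-dimensional subspace, yielding $\|\mathcal{P}_{AU}(A v_\perp)\|^2 \leq (1+\delta_1)\frac{d}{n}\|v_\perp\|^2$ with probability at least $1 - \exp(-d\delta_1^2/8)$. For the denominator I would control $\sigma_{\min}(AU)$ uniformly over the unit sphere of $R(U)$: because $U$ has orthonormal columns, $\|AUz\| = \|A(Uz)\|$ with $Uz \in R(U)$ and $\|Uz\| = \|z\|$, so Corollary \ref{coro:subsp_union_bnd} (the subspace-uniform version of Lemma \ref{lem:subrandmap}, obtained from a net over $R(U)$, which is what contributes the $d\log 24$ factor) provides $\sigma_{\min}^2(AU) \geq (1-\delta_2)\frac{m}{n}$ with a failure probability of the exponentially small form appearing in the statement.

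On the intersection of these two events the deterministic bound gives
\[
\|w_2\|^2 \leq \frac{(1+\delta_1)\frac{d}{n}\|v_\perp\|^2}{(1-\delta_2)\frac{m}{n}} = \frac{1+\delta_1}{1-\delta_2}\frac{d}{m}\|v_\perp\|^2,
\]
and taking square roots yields the claimed inequality, while a union bound over the two failure events produces the stated probability. The one genuinely nontrivial step is the uniform lower bound on $\sigma_{\min}(AU)$: a fixed-vector concentration estimate such as Lemma \ref{lem:subrandmap} does not suffice, because controlling the least singular value requires simultaneous control over all directions in $R(U)$, which is precisely what the net-based Corollary \ref{coro:subsp_union_bnd} supplies. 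Everything else is algebraic rewriting together with the projection estimate already established in Lemma \ref{lem:cs_randSub_proj}, so once the least-squares interpretation of $w_2$ is in hand the proof is short.
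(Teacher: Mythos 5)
Your proposal is correct and is essentially the paper's own proof: the paper likewise writes $\|w_2\| = \|Uw_2\|$, lower-bounds $\|AUw_2\|$ by $\sqrt{(1-\delta_2)m/n}\,\|Uw_2\|$ via Corollary \ref{coro:subsp_union_bnd} (your $\sigma_{\min}(AU)$ bound is exactly this, stated uniformly over $R(U)$), identifies $AUw_2 = \mathcal{P}_{AU}(Av_\perp)$ and bounds it with Lemma \ref{lem:cs_randSub_proj}, then takes a union bound. Your explicit remark that the subspace-uniform (net-based) bound is needed because $w_2$ depends on $A$ is a point the paper leaves implicit, and it is the right justification for invoking Corollary \ref{coro:subsp_union_bnd} rather than the fixed-vector Lemma \ref{lem:subrandmap}.
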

\begin{proof}
Given the fact that $U\in \R^{n\times d}$ with columns being orthonormal, we have $\|w_2\| = \|Uw_2\|$. It then follows that,
  \begin{equation}
    \|U w_2\| \overset{\vartheta_1}\leq \frac{\|AUw_2\|}{\sqrt{(1 - \delta_2)m/n}} \overset{\vartheta_2}\leq \sqrt{\frac{1 + \delta_1}{1 - \delta_2}\frac{d}{m}}\|v_\perp\| \nn
  \end{equation}
  where $\vartheta_1$ follows from Corollary \ref{coro:subsp_union_bnd}, and $\vartheta_2$ followed by Lemma \ref{lem:cs_randSub_proj}, \ie 
  \begin{align}
    \left\|AUw_2\right\| = \left\|\mathcal{P}_{AU}(Av_\perp)\right\| \leq \sqrt{(1 + \delta_1)\frac{d}{n}\|v_\perp\|^2} \nn
  \end{align}
The probability bound is obtained by applying the union bound over $\vartheta_1$ and $\vartheta_2$.
\end{proof}

\begin{lemma}
Let $\phi_d$ denote the largest principal angle between $R(U)$ and $R(\bar U)$, then 
\begin{equation}
  \mathbb{P}\left(\left\|\bar{U}^TA^TAv_\perp\right\| \leq \left(\sin\phi_d + d\delta\right)\frac{m}{n}\|v_\perp\|\right) \geq 1 - 4d \exp\left({-\frac{m\delta^2}{8}}\right) \nn
\end{equation}
  \label{lem:Ubar_vperp_cs}
\end{lemma}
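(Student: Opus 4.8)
The plan is to separate $\bar U^T A^T A v_\perp$ into its deterministic ``mean'' part and a random fluctuation, writing
\[
\bar U^T A^T A v_\perp = \frac{m}{n}\,\bar U^T v_\perp + \bar U^T\!\left(A^T A - \frac{m}{n}\mathbb{I}_n\right)v_\perp ,
\]
and bounding the two summands by $\frac{m}{n}\sin\phi_d\|v_\perp\|$ and $\frac{m}{n}d\delta\|v_\perp\|$ respectively, so that the triangle inequality delivers the claim. The first term is purely geometric and carries no randomness, while the second is where the concentration of $A$ and the union bound enter.

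For the deterministic term I would establish the linear-algebra fact $\|\bar U^T v_\perp\|\le \sin\phi_d\,\|v_\perp\|$ using the canonical form of Lemma \ref{lem:subsprelate}. Applying the unitary $Q$ and writing the full sample in rotated coordinates, since $v=\bar U s\in R(\bar U)$ one gets $Q v=(\tilde s;\,0;\,0)$ with $\tilde s=\bar Y^T s$, and using $QUY=(\Gamma;\Sigma;0)$ a direct computation of $Q v_\perp=(\mathbb{I}_n-QUU^TQ^T)Qv$ gives $Q v_\perp=(\Sigma^2\tilde s;\,-\Sigma\Gamma\tilde s;\,0)$. Reading off the first block then shows $\bar U^T v_\perp=\bar Y\,\Sigma^2\tilde s$, while $\|v_\perp\|^2=\|\Sigma^2\tilde s\|^2+\|\Sigma\Gamma\tilde s\|^2=\|\Sigma\tilde s\|^2$. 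Hence $\|\bar U^T v_\perp\|=\|\Sigma(\Sigma\tilde s)\|\le\|\Sigma\|_2\|\Sigma\tilde s\|=\sin\phi_d\,\|v_\perp\|$, since $\|\Sigma\|_2=\max_k\sin\phi_k=\sin\phi_d$.

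For the random term, its $k$-th coordinate is $\bar u_k^T(A^TA-\tfrac{m}{n}\mathbb{I}_n)v_\perp$, where $\bar u_k$ is the $k$-th unit-norm column of $\bar U$. Applying Corollary \ref{lem:innerProd} with $x=\bar u_k$ and $y=v_\perp$ gives $\lvert\bar u_k^T(A^TA-\tfrac{m}{n}\mathbb{I}_n)v_\perp\rvert\le \frac{m}{n}\delta\|v_\perp\|$, each holding with probability at least $1-4e^{-m\delta^2/8}$. A union bound over the $d$ coordinates makes all of them hold simultaneously with probability at least $1-4d\,e^{-m\delta^2/8}$, and on that event
\[
\Big\|\bar U^T\!\big(A^TA-\tfrac{m}{n}\mathbb{I}_n\big)v_\perp\Big\|_2 \le \Big\|\bar U^T\!\big(A^TA-\tfrac{m}{n}\mathbb{I}_n\big)v_\perp\Big\|_1 \le d\cdot\frac{m}{n}\delta\|v_\perp\| ,
\]
using $\|\cdot\|_2\le\|\cdot\|_1$ in $\R^d$. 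Combining with the deterministic bound yields $\|\bar U^T A^T A v_\perp\|\le (\sin\phi_d+d\delta)\frac{m}{n}\|v_\perp\|$ with the stated probability.

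I expect the main obstacle to be the deterministic estimate $\|\bar U^T v_\perp\|\le\sin\phi_d\|v_\perp\|$: the random part is essentially routine given Corollary \ref{lem:innerProd} and a union bound, but controlling how much of the residual $v_\perp$ leaks back into the true subspace $R(\bar U)$ requires the principal-angle structure. The key point is that $v_\perp\perp R(U)$ forces the $\bar U$-component of $v_\perp$ to be suppressed by a factor of $\sin\phi_d$, which the canonical decomposition of Lemma \ref{lem:subsprelate} makes transparent.
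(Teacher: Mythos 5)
Your proposal is correct and follows essentially the same route as the paper: the same splitting of $\bar U^T A^T A v_\perp$ into $\frac{m}{n}\bar U^T v_\perp$ plus a fluctuation term, the same coordinate-wise application of Corollary \ref{lem:innerProd} with a union bound over the $d$ columns of $\bar U$, and the same principal-angle bound $\|\bar U^T v_\perp\|\le \sin\phi_d\|v_\perp\|$ (which the paper isolates as Lemma \ref{lem:Ubar_vperp}, proved exactly via the canonical form of Lemma \ref{lem:subsprelate} as you do inline). The only cosmetic difference is that you pass through $\|\cdot\|_2\le\|\cdot\|_1$ to get the factor $d\delta$, while the paper keeps the $\ell_2$ form (giving $\sqrt{d}\,\delta$) and then loosens it to $d\delta$.
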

\begin{proof}[{Proof of Lemma \ref{lem:Ubar_vperp_cs}}]
Let $\bar u_k$ denote the $k^{th}$ column of $\bar U$, and $\delta \in (0,1)$. Then 
  \begin{align}
    \left\|\bar U^TA^TAv_\perp\right\| &=  \left\|\bar U^T\left(A^TA - \frac{m}{n}\mathbb{I}_n\right)v_\perp + \frac{m}{n}\bar U^T v_\perp \right\| \nn \\
    &\leq \frac{m}{n}\left\|\bar U^T v_\perp\right\| + \left\|\bar U^T\left(A^TA - \frac{m}{n}\mathbb{I}_n\right)v_\perp\right\| \nn \\
    &= \frac{m}{n}\left\|\bar U^T v_\perp\right\| + \sqrt{\sum_{k=1}^d \left(\bar u_k^TA^TAv_\perp - \frac{m}{n}\bar u_k^T v_\perp\right)^2} \nn \\
    &\overset{\vartheta_1}\leq \frac{m}{n}\left\|\bar U^T v_\perp\right\| + \sqrt{\sum_{k=1}^d \left(\delta\frac{m}{n}\|\bar u_k\| \|v_\perp\|\right)^2} \nn \\
   &\overset{\vartheta_2}\leq \sin\phi_{d}\frac{m}{n}\|v_\perp\| + \frac{m}{n}d\delta\|v_\perp\|
  \end{align}
  where $\vartheta_1$ follows from Lemma \ref{lem:innerProd}; $\vartheta_2$ holds from Lemma \ref{lem:Ubar_vperp} and the fact that $\sqrt{\sum_{k=1}^d \left(\delta\frac{m}{n}\|\bar u_k\| \|v_\perp\|\right)^2} \leq d\delta\frac{m}{n}\|\bar u_k\| \|v_\perp\| $; and the probability bound is obtained by taking the union bound of that in Lemma \ref{lem:innerProd}.
\end{proof}

We are ready to prove Lemma \ref{lem:cs_rpconc} and Lemma \ref{lem:exp_delta_cs}.
\begin{proof}[{Proof of Lemma \ref{lem:cs_rpconc}}]
Let $\eta = \sqrt{\frac{1 + \delta_1}{1 - \delta_1}\frac{d}{m}}$, then according to Lemma \ref{lem:w2_bnd} we have  
\begin{align*}
  \|p\|^2 = \| Uw_1 + Uw_2\|^2 &\leq \left(\|v_\parallel\| + \|Uw_2\|\right)^2 \nn \\
  &\leq \left(\|v_\parallel\| + \eta\|v_\perp\|\right)^2 \nn \\
  &\leq (1 + \eta)^2 \|v\|^2 \nn
\end{align*}
with probability at least $$1 - \exp\left(-\frac{m\delta_1^2}{32}- d\log(\delta_1) + d\log(24)\right) - \exp\left(-\frac{d\delta_1^2}{8}\right)\;.$$
Here the probability bound is obtained by choosing $\delta_1 = \delta_2$ in Lemma \ref{lem:w2_bnd}, hence completes the proof.
\end{proof}

\begin{proof}[{Proof of Lemma \ref{lem:exp_delta_cs}}]
According to the definition of $\Delta$, we can see Lemma \ref{lem:exp_delta_cs} is a direct results of Lemma \ref{lem:w2_bnd} and Lemma \ref{lem:Ubar_vperp}, that is
  \begin{align}
    \lvert\Delta\lvert &= w_2^T\left(\bar U^T U\right)^{-1} \bar U^TA^T \left(\mathbb{I}_m - \mathcal{P}_{AU}\right)Av_\perp \nn \\
    &\leq \left\|w_2^T\right\| \left\|\left(\bar U^T U\right)^{-1}\right\| \left\| \bar U^TA^T \left(\mathbb{I}_m - \mathcal{P}_{AU}\right)Av_\perp\right\| \nn \\
    &\overset{\vartheta_1}\leq \|w_2\| \left\|\left(\bar U^T U\right)^{-1}\right\| \left\|\bar U^TA^T Av_\perp\right\| \nn \\
    &\overset{\vartheta_2}\leq \frac{1}{\cos(\phi_d)}\sqrt{\frac{1 + \delta_1}{1 - \delta_1}\frac{d}{m}}\|v_\perp\| \left(\sin\phi_{d}\frac{m}{n} + \frac{m}{n}d\delta_3\right)\|v_\perp\| \nn \\
    &= \frac{1}{\cos(\phi_d)}\sqrt{\frac{1 + \delta_1}{1 - \delta_1}\frac{d}{m}}\left(\sin(\phi_d) + d\delta_3\right)\frac{m}{n}\|v_\perp\|^2 
    \label{eq:det_incr_cs_proof}
  \end{align}
  where $\vartheta_1$ holds since $\left\|\bar U^TA^T \left(\mathbb{I}_m - \mathcal{P}_{AU}\right)Av_\perp\right\| \leq \left\|\bar U^TA^TAv_\perp\right\|$; $\vartheta_2$ followed by Lemma \ref{lem:w2_bnd} and Lemma \ref{lem:Ubar_vperp_cs}; and the probability bound is obtained by taking the union bound that in Lemma \ref{lem:w2_bnd} and Lemma \ref{lem:Ubar_vperp_cs}.
\end{proof}

Finally, we are going to prove the auxiliary results Corollary \ref{coro:subsp_union_bnd} and Lemma \ref{lem:innerProd}. The key idea for proving Corollary \ref{coro:subsp_union_bnd} is using the covering numbers argument and applying Lemma \ref{lem:rconc_cs} to a given $d$-dimensional subspace $R(U)$. This is a common strategy used for compress sensing. 
\begin{proof}[{Proof of Corollary \ref{coro:subsp_union_bnd}}]
  Without loss of generality we restrict $\|v\| = 1$. From covering numbers \cite{szarek1997metric}, there exists a finite set $Q$ with at most $\left(\frac{24}{\delta}\right)^{d}$ points such that $Q \subset \R(U)$, $\|q\| = 1,  \forall q \in Q$, and for all $ x \in R(U)$ with $\|v\| = 1$ we can find a $q \in Q$ such that 
  \begin{equation}
    \|v - q\| \leq \delta / 8 \label{coverdist} \nonumber 
  \end{equation}
    Now applying Lemma \ref{lem:subrandmap} to the points in $Q$ with $\varepsilon = \delta /2$ and using the standard union bound, then with probability at least $1 - 2\left(\frac{24}{\delta}\right)^{d}\exp\left(-\frac{\delta^2}{32}m\right)$ we have 
    \begin{equation}
      (1 - \delta/2)\frac{m}{n}\|v\|^2 \leq \|A x\|^2 \leq (1 + \delta/2)\frac{m}{n}\|v\|^2 \nonumber
    \end{equation}
    which gives
    \begin{equation}
      \sqrt{1 - \delta/2}\sqrt{\frac{m}{n}}\|v\| \leq \|A x\| \leq \sqrt{1 + \delta/2}\sqrt{\frac{m}{n}}\|v\| 
    \end{equation}
    Since $\|v\| = 1$, we define $\gamma$ as the smallest number such that 
    \begin{equation}
      \|A x\| \leq \sqrt{1 + \gamma}\sqrt{\frac{m}{n}} \quad \forall x \in R(U) 
      \label{smallnumgoal}
    \end{equation}
    Since for any $x\in R(U)$ with $\|v\| = 1$ we can find a $q \in Q$ such that $\|x - q\| \leq \delta/8$, we have the following
    \begin{equation}
      \|Ax\| \leq \|A q\| + \|A(x - q)\| \leq \sqrt{1 + \delta/2}\sqrt{\frac{m}{n}} + \sqrt{1 + H}\sqrt{\frac{m}{n}}\delta/8 \nonumber
    \end{equation}
    Since $\gamma$ is the smallest number (\ref{smallnumgoal}) holds, we have $\sqrt{1 + \gamma} \leq \sqrt{1 + \delta/2} + \sqrt{1 + \gamma}\delta/8$.
    \begin{equation}
      \sqrt{1 + \gamma} \leq \frac{\sqrt{1 + \delta / 2}}{1 - \delta / 8} \leq \sqrt{1 + \delta}
    \end{equation} 
    Similarly, the lower bound follows by 
    \begin{eqnarray}
      \|A x\| \geq \|A q\| - \|A(x - q)\| &\geq& \sqrt{1 - \delta/2}\sqrt{\frac{m}{n}} - \sqrt{1 + \gamma}\frac{\delta}{8}\sqrt{\frac{m}{n}} \nonumber \\
      &\geq& \left(\sqrt{1 - \delta/2} - \sqrt{1 + \delta}\frac{\delta}{8}\right)\sqrt{\frac{m}{n}} \nonumber \\
      &\geq& \sqrt{1 - \delta}\sqrt{\frac{m}{n}} \nonumber
    \end{eqnarray}
  This completes the proof.
\end{proof}

\begin{proof}[{Proof of Lemma \ref{lem:innerProd}}]
Note that, 
  \begin{align}
    \frac{x^TA^TAy}{\|x\|\|y\|} &= \frac{1}{4}\left(\left\|A\left(\frac{x}{\|x\|} + \frac{y}{\|y\|}\right)\right\|^2 - \left\|A\left(\frac{x}{\|x\|} - \frac{y}{\|y\|}\right)\right\|^2\right) \nn 
  \end{align}
  Applying Lemma \ref{lem:subrandmap} on both terms separately and applying the union bound we have
  \begin{align}
  &\mathbb{P}\left[\frac{x^TA^TAy}{\|x\|\|y\|} \leq \frac{m}{n}\left(\frac{x^Ty}{\|x\|\|y\|} - \delta\right)\right] \nn \\
   &\qquad =\mathbb{P}\left[\frac{x^TA^TAy}{\|x\|\|y\|} \leq \frac{1}{4}\left((1 - \delta)\frac{m}{n}\left\|\frac{x}{\|x\|} + \frac{y}{\|y\|}\right\|^2 - (1 + \delta)\frac{m}{n}\left\|\frac{x}{\|x\|} - \frac{y}{\|y\|}\right\|^2\right)\right] \nn \\
   &\qquad < 2\exp\left(-\frac{m\delta^2}{8}\right) 
    \label{eq:inner_tail11}
  \end{align}
  Similarly, 
    \begin{align}
  &\mathbb{P}\left[\frac{x^TA^TAy}{\|x\|\|y\|} \geq \frac{m}{n}\left(\frac{x^Ty}{\|x\|\|y\|} + \delta\right)\right] \nn \\
   &\qquad =\mathbb{P}\left[\frac{x^TA^TAy}{\|x\|\|y\|} \geq \frac{1}{4}\left((1 + \delta)\frac{m}{n}\left\|\frac{x}{\|x\|} + \frac{y}{\|y\|}\right\|^2 - (1 - \delta)\frac{m}{n}\left\|\frac{x}{\|x\|} - \frac{y}{\|y\|}\right\|^2\right)\right] \nn \\
   &\qquad  < 2\exp\left(-\frac{m\delta^2}{8}\right)
    \label{eq:inner_tail22}
  \end{align}
  holds with probability no more than 
  \ref{eq:inner_tail11} and \ref{eq:inner_tail22} complete the proof.
\end{proof}

\paragraph{Proof of Missing Data} 

Here we again bound the quantities in Lemma \ref{lem:det_incr_expr_undersample}, Equation \ref{eq:det_ratio_undersample}, this time assuming $A$ represents an entry-wise observation operation and assuming incoherence on the signals of interest. As we show below, in the proof of Theorem \ref{thm:miss_convgrate}, we put together bounds given by Lemmas \ref{lem:rconc_miss}, \ref{lem:pconc_miss} and \ref{lem:delta_miss}, which are all proved in this section too, along with Lemma \ref{lem:uincoh} for completeness. 
 We start by proving the main result for missing data.
\begin{proof}[{Proof of ~Theorem \ref{thm:miss_convgrate}}]
Given the condition required by Theorem \ref{thm:miss_convgrate}, we have $\sin\phi_d \leq \sqrt{d\mu_0/16n}$ and $\cos\phi_d \geq \sqrt{1 - d\mu_0 / 16n}$. This together with Lemma \ref{lem:uincoh} and Lemma \ref{lem:delta_miss} yield $\left\lvert\Delta\right\lvert \leq \frac{\eta_3\sqrt{1 + \frac{m}{16n}}}{\sqrt{1 - d\mu_0 / 16n}}\frac{2d\mu_0}{n}\|v_\perp\|^2 \leq \frac{2\eta_3\sqrt{1 + \frac{1}{16}}}{\sqrt{1 - \frac{1}{16}}} \frac{d\mu_0}{n}\left\|v_\perp\right\|^2 \leq \frac{11}{5}\eta_3\frac{d\mu_0}{n}\left\|v_\perp\right\|^2$. Also for $\beta_2$ in Lemma \ref{lem:delta_miss} we have $\beta_2 \leq \sqrt{2\mu(v_\perp)\log(1/\delta)} = \beta_1$. Therefore, 
\begin{equation}
  \left\lvert\Delta\right\lvert \leq \frac{11}{5}\frac{(1 + \beta_1)^2}{1-\gamma_1}\frac{d\mu_0}{n}\|v_\perp\|^2 \;.
  \label{eq:simp_delta_miss}
\end{equation} 
Letting $\eta_2 = \frac{(1 + \beta_1)^2}{1-\gamma_1}\frac{d\mu_0}{m}$ and $\alpha_1 = \sqrt{\frac{2\mu(v_\perp)^2}{m}\log\left(\frac{1}{\delta}\right)}$, then applying this definition together with Lemma \ref{lem:uincoh} to Lemma \ref{lem:pconc_miss} Lemma \ref{lem:rconc_miss} yields
\begin{align}
  &\left\|p\right\|^2 \leq \left(1 + \sqrt{\frac{2\eta_2}{1 - \gamma_1}}\right)^2\|v\|^2 \label{eq:pmiss_simp} \\
  & \left\|r_\Omega\right\|^2 \geq (1 - \alpha_1 - 2\eta_2)\frac{m}{n}\left\|v_\perp\right\|^2 
  \label{eq:rmiss_simp}
\end{align}
Now applying \ref{eq:simp_delta_miss}, \ref{eq:pmiss_simp} and \ref{eq:rmiss_simp} to \ref{eq:miss_det} we obtain
\begin{align}
  \frac{\zeta_{t+1}}{\zeta_t} &\geq 1 + \frac{(1 - \alpha_1 - 2\eta_2)}{(1 + \sqrt{2\eta_2/(1 -\gamma_1)})^2}\frac{m}{n} \frac{\|v_\perp\|^2}{\|v\|^2} - \frac{22}{5}\frac{\eta_2}{(1 + \sqrt{2\eta_2/(1 -\gamma_1)})^2}\frac{m}{n}\frac{\|v_\perp\|^2}{\|v\|^2} \nn \\
  &\geq 1 + \frac{(1 - \alpha_1 - \frac{32}{5}\eta_2)}{(1 + \sqrt{2\eta_2/(1 -\gamma_1)})^2}\frac{m}{n} \frac{\|v_\perp\|^2}{\|v\|^2}
  \label{eq:miss_pf_det_raw}
\end{align}
which holds with probability at least $1 - 3\delta$. The probability bound is obtained by taking the union bound of those generating Lemmas \ref{lem:rconc_miss}, \ref{lem:pconc_miss} and \ref{lem:delta_miss}, as we can see in the proofs of them in this Section, this union bound is at least $1 - 3\delta$. 

Letting $\eta_1 = \frac{(1 - \alpha_1 - \frac{32}{5}\eta_2)}{(1 + \sqrt{2\eta_2/(1-\gamma_1)})^2}$, then $\eta_1> 0$ is equivalent to $1 - \alpha_1 - \frac{32}{5}\eta_2 >0$, for which we have the following: if 
{{\begin{equation}
    m > \max\left\{\frac{128d\mu_0}{3}\log\left(\frac{2d}{\delta}\right), 32 \mu(v_\perp)^2 \log\left(\frac{1}{\delta}\right), 52d\mu_0\left(1 + \sqrt{2\mu(v_\perp)\log\left(\frac{1}{\delta}\right)}\right)^2\right\}
    \label{eq:sample_complexity} 
  \end{equation}}}
then $\eta_1 > \frac{1}{4}$.

Under this condition, taking expectation with respect to $v$ yields, 
\begin{equation}
  \mathbb{E}_v\left[\frac{\zeta_{t + 1}}{\zeta_t} \big\lvert U\right] \geq 1 + \frac{1}{4}\frac{m}{n}\mathbb{E}\left[\frac{\|v_\perp\|^2}{\|v\|^2}\bigg\lvert U\right] \geq 1 + \frac{1}{4}\frac{m}{n}\frac{1 - \zeta_t}{d}
\end{equation}
where the last inequality follows from Lemma \ref{lem:key_quantity}. Finally choosing $\delta$ to be $1/n^2$completes the proof. 
\end{proof}

We then prove Corollary \ref{coro:discrep_decay}, the result that allows comparison between our convergence rate and that in \cite{balzano2014local}. 
\begin{proof}[{Proof of Corollary \ref{coro:discrep_decay}}]
Let $X = [X_1, \dots, X_d]$ with $X_i = \sin^2\phi_{i}$. Let $f(X) = 1 - \sum_{i = 1}^{d}X_i - \Pi_{i = 1}^{d}(1 - X_i)$, then $\frac{\partial f(X)}{\partial X_i} = -1 + \Pi_{j \neq i}(1 - X_j) \leq 0$. That is, $f(X)$ is a decreasing function of each component. Therefore, $f(X) \leq f(0) = 0$. It follows that 
\begin{equation}
  \zeta_t = \Pi_{i = 1}^{d}(1 - X_i) \geq 1 - \sum_{i = 1}^{d}X_i \geq 1 - \frac{d\mu_0}{16 n} 
  \label{eq:coro_key1}
\end{equation}
  With a slight modification of Theorem \ref{thm:miss_convgrate} we obtain
\begin{align}
  \mathbb{E}\left[\kappa_{t + 1} \big\lvert \kappa_t\right] \leq \left(1 - \frac{1}{4}\frac{m}{n}\frac{\zeta_t}{d}\right)\kappa_t  \;.
  \label{eq:coro_key2}
\end{align}
(\ref{eq:coro_key1}) and (\ref{eq:coro_key2}) together complete the proof. 
\end{proof}

We now focus on proving the key lemmas for establishing Theorem \ref{thm:miss_convgrate}, for which we need the following lemmas (the proofs can be found in \cite{balzano2010high}). 
\begin{lemma}\cite{balzano2010high}
  Let $\delta>0$. Suppose $m \geq \frac{8}{3}d\mu(U)\log\left(2d/\delta\right)$, then 
  \begin{equation}
    \mathbb{P}\left(\left\|\left(U_{\Omega}^TU_{\Omega}\right)^{-1}\right\| \leq \frac{n}{(1 - \gamma_1)m}\right) \geq 1 - \delta \nn
  \end{equation}
  where $\gamma_1 = \sqrt{\frac{8d\mu(U)}{3m}\log\left(2d/\delta\right)}$. 
  \label{lem:inv_Uomega}
\end{lemma}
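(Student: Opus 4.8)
The plan is to recognize $U_\Omega^T U_\Omega$ as a sum of $m$ independent rank-one random matrices and apply a matrix Chernoff bound to control its smallest eigenvalue. First I would write $U_\Omega^T U_\Omega = \sum_{k=1}^m X_k$, where $X_k = U_{\omega_k}^T U_{\omega_k}$ is the outer product of the row of $U$ indexed by the $k$-th draw $\omega_k$, and the $\omega_k$ are i.i.d.\ uniform on $\{1,\dots,n\}$ (sampling with replacement, as assumed in Section \ref{sec:missing_data}). Each $X_k$ is positive semidefinite and has rank one.

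Next I would compute the two parameters the Chernoff bound needs. Since $U$ has orthonormal columns, $\E[X_k] = \frac{1}{n}\sum_{i=1}^n U_i^T U_i = \frac{1}{n} U^T U = \frac{1}{n}\mathbb{I}_d$, so $\E[U_\Omega^T U_\Omega] = \frac{m}{n}\mathbb{I}_d$ and $\mu_{\min} := \lambda_{\min}(\E[U_\Omega^T U_\Omega]) = \frac{m}{n}$. For the per-term bound, the incoherence assumption gives, deterministically, $\lambda_{\max}(X_k) = \|U_{\omega_k}^T\|^2 = \|\mathcal{P}_U e_{\omega_k}\|^2 \le \frac{\mu(U)\, d}{n} =: R$, where I use $\|\mathcal{P}_U e_i\|^2 = e_i^T U U^T e_i = \|U_i^T\|^2$.

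With these in hand, I would invoke the lower-tail matrix Chernoff inequality for sums of independent PSD matrices with $\lambda_{\max}(X_k) \le R$, of the form $\bP\!\left(\lambda_{\min}(\sum_k X_k) \le (1-\gamma)\mu_{\min}\right) \le 2d\,\exp\!\left(-\tfrac{3}{8}\,\gamma^2 \mu_{\min}/R\right)$. Substituting $\mu_{\min} = m/n$ and $R = \mu(U)d/n$ collapses the exponent to $-\tfrac{3}{8}\gamma^2 m/(\mu(U)d)$, and choosing $\gamma = \gamma_1 = \sqrt{\frac{8 d\mu(U)}{3m}\log(2d/\delta)}$ makes the right-hand side at most $\delta$. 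The sample-complexity hypothesis $m \ge \frac{8}{3}d\mu(U)\log(2d/\delta)$ is exactly what forces $\gamma_1 < 1$, so the event $\lambda_{\min}(U_\Omega^T U_\Omega) \ge (1-\gamma_1)\frac{m}{n} > 0$ is nonvacuous. On this event $U_\Omega$ has full column rank, $U_\Omega^T U_\Omega$ is invertible, and $\|(U_\Omega^T U_\Omega)^{-1}\| = 1/\lambda_{\min}(U_\Omega^T U_\Omega) \le \frac{n}{(1-\gamma_1)m}$, which is the claim.

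The main obstacle is pinning down the exact constants $8/3$ and $\log(2d/\delta)$ in $\gamma_1$: these are dictated by the precise scalar estimate fed into the matrix bound — the $3/8$ in the exponent comes from the sharper inequality $\frac{e^{-\gamma}}{(1-\gamma)^{1-\gamma}} \le e^{-3\gamma^2/8}$ valid for $\gamma \in (0,1)$ (rather than the looser $e^{-\gamma^2/2}$), while the factor $2$ inside the log tracks the dimensional prefactor $2d$ emerging from the Chernoff/union step. Matching these constants to the stated form is the only delicate part; the conceptual skeleton (expectation computation, incoherence bound on $R$, and inversion of $\lambda_{\min}$) is routine direct substitution.
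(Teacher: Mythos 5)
Your proof is correct, but it takes a genuinely different route from the source of this lemma: the paper itself gives no proof, deferring entirely to \cite{balzano2010high}, and the argument there is based on the noncommutative Bernstein inequality rather than matrix Chernoff. In \cite{balzano2010high} one centers the rank-one terms, writing $U_\Omega^T U_\Omega - \frac{m}{n}\mathbb{I}_d = \sum_k Z_k$ with $Z_k = U_{\omega_k}^T U_{\omega_k} - \frac{1}{n}\mathbb{I}_d$, and must bound both the spectral norm $\|Z_k\| \le d\mu(U)/n$ and the variance $\|\mathbb{E}[Z_k^2]\| \le d\mu(U)/n^2$; the constants in $\gamma_1$ then arise because the Bernstein denominator satisfies $2(1+\gamma/3) \le 8/3$ for $\gamma \le 1$ and the two-sided bound carries a $2d$ prefactor. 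You instead apply the matrix Chernoff lower-tail bound directly to the uncentered PSD sum, which needs only the per-term eigenvalue bound $\lambda_{\max}(X_k) \le \mu(U)d/n$ coming from incoherence—no centering, no variance computation. Your route is leaner and in fact yields a slightly \emph{stronger} inequality (prefactor $d$ and exponent $-\gamma^2\mu_{\min}/(2R)$), which you then deliberately loosen to the prefactor $2d$ and exponent $-\tfrac{3}{8}\gamma^2\mu_{\min}/R$ so as to reproduce the stated $\gamma_1$ exactly; both derivations land on the same statement. One small correction of wording: you call $\frac{e^{-\gamma}}{(1-\gamma)^{1-\gamma}} \le e^{-3\gamma^2/8}$ the ``sharper'' estimate and $e^{-\gamma^2/2}$ the ``looser'' one, but it is the reverse—the $3/8$ bound is implied by the $1/2$ bound. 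Since the looser bound is precisely what matches the lemma's constants, this mislabeling has no mathematical consequence.
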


\begin{lemma}[\cite{balzano2010high}, Lemma 1] Let $\alpha=\sqrt{\frac{2\mu(v_\perp)^2}{m}\log(1/\delta)}$, then 
\begin{equation}
  \mathbb{P}\left(\|v_{\perp, \Omega}\|^2 \geq (1 - \alpha)\frac{m}{n}\|v_\perp\|^2\right) \geq 1- \delta \nn
\end{equation}
\label{lem:Av_perp_miss_conc}
\end{lemma}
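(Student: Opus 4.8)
The plan is to exploit the fact that, in this section, $\Omega$ consists of $m$ indices drawn uniformly and \emph{independently} (with replacement) from $\{1,\dots,n\}$, so that $\|v_{\perp,\Omega}\|^2$ is a sum of $m$ i.i.d.\ nonnegative random variables, and then apply a multiplicative Chernoff lower-tail bound. Concretely, I would write $\|v_{\perp,\Omega}\|^2 = \sum_{k=1}^m Y_k$ where $Y_k = (v_\perp)_{i_k}^2$ and $i_k$ is the $k$-th sampled index. Since each $i_k$ is uniform on $\{1,\dots,n\}$, we have $\E[Y_k] = \frac1n\sum_{i=1}^n (v_\perp)_i^2 = \frac1n\|v_\perp\|^2$, so $\E[\|v_{\perp,\Omega}\|^2] = \frac{m}{n}\|v_\perp\|^2$, which is exactly the target mean appearing in the statement.

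First I would record the boundedness of the summands using the vector incoherence parameter $\mu(v_\perp)$ from \eqref{defn:vec_incoh}: each $Y_k \le \|v_\perp\|_\infty^2 = \frac{\mu(v_\perp)}{n}\|v_\perp\|^2 =: b$. Normalizing $Z_k = Y_k/b \in [0,1]$, the $Z_k$ are independent with total mean $\mu_Z := \E[\sum_k Z_k] = \frac{(m/n)\|v_\perp\|^2}{b} = \frac{m}{\mu(v_\perp)}$. Then I would invoke the standard multiplicative Chernoff lower-tail inequality for independent $[0,1]$-valued variables, namely $\bP(\sum_k Z_k \le (1-\epsilon)\mu_Z) \le \exp(-\mu_Z\epsilon^2/2)$ for $\epsilon \in (0,1)$. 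Rescaling back, this reads $\bP\big(\|v_{\perp,\Omega}\|^2 \le (1-\epsilon)\tfrac{m}{n}\|v_\perp\|^2\big) \le \exp\!\big(-\tfrac{m\epsilon^2}{2\mu(v_\perp)}\big)$.

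Choosing $\epsilon = \alpha = \sqrt{\frac{2\mu(v_\perp)^2}{m}\log(1/\delta)}$ makes the exponent equal to $-\mu(v_\perp)\log(1/\delta)$, so the failure probability is at most $\delta^{\mu(v_\perp)}$. Finally, because $\mu(v_\perp) \ge 1$ for any nonzero vector (indeed $\|v_\perp\|_2^2 \le n\|v_\perp\|_\infty^2$ forces $\mu(v_\perp)\ge 1$) and $\delta < 1$, we have $\delta^{\mu(v_\perp)} \le \delta$, which yields the claim. For the degenerate regime $\alpha \ge 1$ the event is deterministic, since $(1-\alpha)\tfrac{m}{n}\|v_\perp\|^2 \le 0 \le \|v_{\perp,\Omega}\|^2$ always, so nothing need be proved there.

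The only real subtlety, and the natural candidate for the ``main obstacle,'' is matching the precise constant: the Chernoff argument produces the exponent $-\mu(v_\perp)\log(1/\delta)$ rather than $-\log(1/\delta)$, and it is exactly the extra factor $\mu(v_\perp)\ge 1$ combined with $\delta<1$ that lets one absorb the slack and conclude a probability bounded by $\delta$. I would also be careful to note that the with-replacement sampling convention adopted in this section is what makes the summands genuinely i.i.d.; had the indices been drawn \emph{without} replacement, the clean Chernoff step would have to be replaced by a Serfling-type concentration inequality for sampling without replacement, so the convention is what keeps the proof short.
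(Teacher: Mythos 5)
The paper never proves this statement itself: as the bracketed citation in the lemma header indicates, it is imported directly from Lemma 1 of \cite{balzano2010high}, so there is no in-paper proof to compare against. Your argument is correct and self-contained, and it takes a genuinely different route from the tool this paper (and the cited source) uses for concentration results of this type in the missing-data setting, namely McDiarmid's bounded-differences inequality (Theorem \ref{thm:McDiarmid}, used in the paper's proof of the analogous Lemma \ref{lem:Ubar_v}). Applied here to $\|v_{\perp,\Omega}\|_2^2=\sum_{k=1}^m (v_\perp)_{i_k}^2$ with per-sample difference bound $\|v_\perp\|_\infty^2 = \frac{\mu(v_\perp)}{n}\|v_\perp\|_2^2$, McDiarmid/Hoeffding gives failure probability $\exp\bigl(-2\alpha^2 m/\mu(v_\perp)^2\bigr) = \delta^4 \le \delta$ for the stated $\alpha$, with no restriction on the size of $\alpha$. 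You instead normalize the i.i.d.\ summands to $[0,1]$ and invoke the multiplicative Chernoff lower tail, obtaining failure probability $\delta^{\mu(v_\perp)}$, and then use $\mu(v_\perp)\ge 1$ to conclude; your separate treatment of the degenerate regime $\alpha \ge 1$ is a step your route genuinely needs (the multiplicative bound requires $\epsilon\in(0,1)$) and you handle it correctly. Both arguments are valid and comparably short: yours is sharper when $\mu(v_\perp)>4$ and looser otherwise, and your closing observation that with-replacement sampling is what makes the summands i.i.d.\ (so that a Serfling-type inequality would otherwise be needed) is consistent with the sampling convention the paper states at the start of Section \ref{sec:missing_data}.
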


\begin{lemma}[\cite{balzano2010high}, Lemma 2]
Let $\mu(U), \mu(v_\perp)$ denote the incoherence parameters of $R(U)$ and $v_\perp$, and let $\delta \in (0,1)$ and $\beta_1 = \sqrt{2\mu(v_\perp)\log\left(1/\delta\right)}$, then 
  \begin{align}
    &\mathbb{P}\left(\left\|U_{\Omega}^T v_{\perp,\Omega}\right\|^2 \leq (\beta_1 + 1)^2\frac{m}{n}\frac{d\mu(U)}{n}\|v_{\perp}\|^2\right) \geq 1 - \delta \nn
  \end{align}
  \label{lem:U_v}
\end{lemma}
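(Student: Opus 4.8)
The plan is to write $U_\Omega^T v_{\perp,\Omega}$ as a sum of independent, mean-zero random vectors and apply a vector Bernstein inequality. Writing the sampled indices as $\omega_1,\dots,\omega_m$ drawn i.i.d.\ uniformly from $\{1,\dots,n\}$ and letting $U_i$ denote the $i$-th row of $U$, we have
\[
U_\Omega^T v_{\perp,\Omega} = \sum_{j=1}^m X_j, \qquad X_j := (v_\perp)_{\omega_j}\, U_{\omega_j}^T \in \R^d .
\]
Because $v_\perp$ is the residual of $v$ onto $R(U)$, it satisfies $U^T v_\perp = 0$, so $\E[X_j] = \frac1n \sum_{i=1}^n (v_\perp)_i U_i^T = \frac1n U^T v_\perp = 0$; thus $S := \sum_j X_j$ is a sum of i.i.d.\ zero-mean vectors, and in particular $\E\|S\|^2 = \sum_j \E\|X_j\|^2$.

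First I would record the two deterministic ingredients supplied by the incoherence hypotheses. The subspace incoherence of $R(U)$ gives $\|U_i\|^2 = \|\mathcal{P}_{U} e_i\|^2 \le \frac{\mu(U)d}{n}$ for every $i$, while the vector incoherence $\mu(v_\perp) = n\|v_\perp\|_\infty^2/\|v_\perp\|^2$ gives $(v_\perp)_i^2 \le \frac{\mu(v_\perp)}{n}\|v_\perp\|^2$. From these, the per-sample norm is bounded by $\|X_j\| = |(v_\perp)_{\omega_j}|\,\|U_{\omega_j}\| \le M := \frac{\sqrt{d\mu(U)\mu(v_\perp)}}{n}\|v_\perp\|$, and the variance proxy is
\[
V := \sum_{j=1}^m \E\|X_j\|^2 = \frac{m}{n}\sum_{i=1}^n (v_\perp)_i^2\,\|U_i\|^2 \le \frac{m}{n}\frac{d\mu(U)}{n}\|v_\perp\|^2 .
\]
Note that this $V$ is exactly the quantity appearing on the right-hand side of the lemma, and that $\E\|S\|^2 = V$ because $S$ is zero-mean.

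The key relation to exploit is $M = \sqrt{\mu(v_\perp)/m}\,\sqrt{V}$, i.e.\ the largest summand is small relative to $\sqrt V$ precisely by the factor $\sqrt{\mu(v_\perp)/m}$. Feeding $M$ and $V$ into a vector Bernstein inequality --- a tail bound of the form $\bP(\|S\| \ge \sqrt V + t) \le \exp(-c\,t^2/(V + Mt))$ --- and choosing $t = \beta_1\sqrt V$ with $\beta_1 = \sqrt{2\mu(v_\perp)\log(1/\delta)}$, the exponent becomes, up to the lower-order $Mt$ correction, $-\beta_1^2/2 = -\mu(v_\perp)\log(1/\delta) \le -\log(1/\delta)$, since $\mu(v_\perp)\ge 1$. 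This yields $\|S\| \le (1+\beta_1)\sqrt V$ with probability at least $1-\delta$; squaring and substituting the bound on $V$ gives the claim, with the ``$1$'' being the baseline $\sqrt V = \sqrt{\E\|S\|^2}$ and the ``$\beta_1$'' being the deviation.

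The main obstacle is this concentration step: matching the constant $(1+\beta_1)^2$ exactly requires a sufficiently sharp vector-valued Bernstein bound together with careful control of the lower-order $Mt$ term in its denominator, so that the stated inequality holds with no auxiliary lower bound on $m$. The factor $\sqrt{\mu(v_\perp)}$ entering $\beta_1$ is the signature of this step and traces back entirely to the relation $M/\sqrt V = \sqrt{\mu(v_\perp)/m}$; the incoherence bookkeeping that produces $M$ and $V$ is otherwise routine.
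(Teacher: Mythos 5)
Your decomposition and bookkeeping are exactly those of the actual argument (this lemma is proved in \cite{balzano2010high}, and the same scheme appears in this paper's own proof of the companion Lemma \ref{lem:Ubar_v}): write $U_\Omega^T v_{\perp,\Omega} = \sum_{j=1}^m X_j$ with $X_j = (v_\perp)_{\omega_j}U_{\omega_j}^T$, use $U^Tv_\perp=0$ to get $\E[X_j]=0$, and use the two incoherence bounds to get $\|X_j\|\le M$ and $\E\|S\|^2 \le V := \frac{m}{n}\frac{d\mu(U)}{n}\|v_\perp\|^2$, with the key identity $mM^2=\mu(v_\perp)V$. The genuine gap is the step you yourself flag as the ``main obstacle'': the concentration inequality. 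As proposed, no standard vector Bernstein bound of the form $\bP\big(\|S\|\ge \sqrt{V}+t\big)\le \exp\big(-c\,t^2/(V+Mt)\big)$ delivers the stated constant and probability. Matrix Bernstein applied to $d\times 1$ matrices carries a dimensional prefactor $d+1$, which would contaminate the failure probability with a $\log d$ term; the dimension-free variants used in low-rank recovery have exponent $-t^2/(4V)$ (i.e., $c=1/4$, not $1/2$) and hold only in the regime $t\le V/M$, which for $t=\beta_1\sqrt{V}$ is precisely an auxiliary requirement $m\gtrsim \mu(v_\perp)^2\log(1/\delta)$ that the lemma does not assume; and the slack you invoke from $\mu(v_\perp)\ge 1$ cannot absorb a constant-factor loss in the exponent when $\mu(v_\perp)$ is close to $1$. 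So the proof is incomplete at exactly its load-bearing step.

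The tool that closes it with no side conditions is McDiarmid's bounded-differences inequality (Theorem \ref{thm:McDiarmid}) applied to the scalar function $f(X_1,\dots,X_m)=\big\|\sum_{j=1}^m X_j\big\|$, combined with Jensen's inequality $\E\|S\|\le\sqrt{\E\|S\|^2}\le\sqrt{V}$; this is how \cite{balzano2010high} proves the lemma, and how this paper proves Lemma \ref{lem:Ubar_v}. Replacing one sample changes $f$ by at most $2M$, so McDiarmid gives $\bP\big(\|S\|\ge \E\|S\|+\epsilon\big)\le \exp\big(-2\epsilon^2/(4mM^2)\big)$: a sub-Gaussian tail around the mean with variance proxy exactly $mM^2$, with no lower-order $Mt$ correction and no dimensional prefactor. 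Choosing $\epsilon=\beta_1\sqrt{V}$ and using $mM^2=\mu(v_\perp)V$, the exponent is $-\beta_1^2/(2\mu(v_\perp))=-\log(1/\delta)$, hence $\|S\|\le(1+\beta_1)\sqrt{V}$ with probability at least $1-\delta$, and squaring gives the lemma with the exact stated constant and no condition on $m$. Swap your Bernstein step for McDiarmid plus Jensen and the rest of your write-up goes through verbatim.
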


Now we are ready for the proof of Lemmas \ref{lem:rconc_miss}, \ref{lem:pconc_miss} and \ref{lem:delta_miss}.
\begin{proof}[{Proof of Lemma \ref{lem:rconc_miss}}]
According to Lemmas \ref{lem:Av_perp_miss_conc}, \ref{lem:U_v} and \ref{lem:inv_Uomega}, we have 
\begin{align}
  \left\|r_\Omega\right\|^2 &= \left\|v_{\perp,\Omega}\right\|^2 - v_{\perp,\Omega}^TU_\Omega\left(U_\Omega^TU_\Omega\right)^{-1}U_\Omega^T v_{\perp,\Omega} \nn \\
  &\geq\|v_{\perp,\Omega}\|^2 - \left\|\left(U_{\Omega}^TU_{\Omega}\right)^{-1}\right\|\| U_{\Omega}^T v_{\perp,\Omega}\|^2 \nn \\
  &\overset{\vartheta_1}\geq \left(1 - \alpha - \frac{(\beta_1+1)^2}{1-\gamma_1}\frac{d\mu(U)}{m}\right)\frac{m}{n}\|v_\perp\|^2 \nn
\end{align} 
with probability at least $1-3\delta$.
\end{proof}

\begin{proof}[{Proof of Lemma \ref{lem:pconc_miss}}]
Lemma \ref{lem:U_v} and Lemma \ref{lem:inv_Uomega} together give the following
\begin{align}
  \left\|Uw_2\right\|^2 = \left\|\left(U_\Omega^TU_\Omega\right)^{-1}U_\Omega^T v_{\perp,\Omega}\right\|^2 &\leq \left\|\left(U_\Omega^TU_\Omega\right)^{-1}\right\|^2 \left\|U_\Omega^T v_{\perp,\Omega}\right\|^2 \nn \\
  &\leq \frac{(\beta_1 + 1)^2}{(1 - \gamma_1)^2}\frac{d \mu(U)}{m}\|v_\perp\|^2 \nn
\end{align}
holds with probability exceeding $1 - 2\delta$. Therefore,
  \begin{equation}
    \|p\|^2 \leq \left(\|v_{\parallel}\| + \|Uw_2\|\right)^2 \leq \left(1 + \frac{\beta_1 + 1}{1 - \gamma_1}\sqrt{\frac{d\mu(U)}{m}}\right)^2\|v\|^2 \nn 
  \end{equation}
\end{proof}

We also need the following lemma for the proof of Lemma \ref{lem:delta_miss}, the proof of which is provided at the end of this section. 
\begin{lemma}
Let $\beta_2 = \sqrt{2\mu(v_\perp)\log\left(\frac{1}{\delta}\right)\frac{d\mu_0}{d\mu_0 + m\sin^2\phi_d}}$, where again $\mu_0$ denoting the incoherence parameter of $R(\bar U)$. Then  
\begin{equation}
  \mathbb{P}\left(\left\|\bar U_{\Omega}^T v_{\perp,\Omega}\right\|\leq (1 + \beta_2)\sqrt{\frac{m}{n}\frac{d\mu_0}{n}}\sqrt{\frac{m\sin^2\phi_d}{d\mu_0}+1}\|v_{\perp}\| \right) \geq 1 -\delta \nn 
\end{equation} 
  \label{lem:Ubar_v}
\end{lemma}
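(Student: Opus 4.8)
The plan is to realize $\bar U_{\Omega}^T v_{\perp,\Omega}$ as a sum of $m$ independent bounded random vectors and then apply a bounded-difference (McDiarmid) concentration inequality, paralleling the argument behind Lemma \ref{lem:U_v} but now accommodating a \emph{nonzero} mean (here $v_\perp$ is orthogonal to $R(U)$, not to $R(\bar U)$). Since each row of $A$ is an independent uniform draw from the rows of $\mathbb{I}_n$, writing $\Omega=\{\omega_1,\dots,\omega_m\}$ we have $Y:=\bar U_{\Omega}^T v_{\perp,\Omega}=\sum_{j=1}^m \xi_j$ with $\xi_j:=(v_\perp)_{\omega_j}\,\bar U_{(\omega_j)}^T\in\R^d$ i.i.d., where $\bar U_{(i)}\in\R^{1\times d}$ denotes the $i$-th row of $\bar U$.

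First I would record the first two moments. Each $\xi_j$ has mean $\tfrac1n\bar U^T v_\perp$, so $\mathbb{E}[Y]=\tfrac{m}{n}\bar U^T v_\perp$, and using the principal-angle bound $\|\bar U^T v_\perp\|\le \sin\phi_d\|v_\perp\|$ (which follows from the canonical decomposition of Lemma \ref{lem:subsprelate} since $v_\perp\perp R(U)$, and is already invoked in the compressively sampled case) we get $\|\mathbb{E}[Y]\|^2\le \tfrac{m^2}{n^2}\sin^2\phi_d\,\|v_\perp\|^2=:M^2$. For the fluctuation, incoherence of $R(\bar U)$ gives $\|\bar U_{(i)}\|^2=\|\mathcal{P}_{\bar U}e_i\|^2\le \tfrac{\mu_0 d}{n}$, hence $\mathbb{E}\|\xi_j\|^2\le \tfrac1n\max_i\|\bar U_{(i)}\|^2\,\|v_\perp\|^2\le \tfrac{\mu_0 d}{n^2}\|v_\perp\|^2$, so $\sum_j\big(\mathbb{E}\|\xi_j\|^2-\|\mathbb{E}\xi_j\|^2\big)\le \tfrac{m\mu_0 d}{n^2}\|v_\perp\|^2=:V$. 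Combining these via $\mathbb{E}\|Y\|^2=\|\mathbb{E}Y\|^2+\sum_j\operatorname{Var}(\xi_j)\le M^2+V$ and Jensen's inequality yields
\begin{equation}
\mathbb{E}\|Y\|\le \sqrt{M^2+V}=\sqrt{\tfrac{m}{n}\tfrac{d\mu_0}{n}}\,\sqrt{\tfrac{m\sin^2\phi_d}{d\mu_0}+1}\,\|v_\perp\|, \nn
\end{equation}
which is exactly the target bound \emph{without} the $(1+\beta_2)$ factor.

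Next I would concentrate $\|Y\|$ about its mean. Vector incoherence of $v_\perp$ gives $|(v_\perp)_i|\le\|v_\perp\|_\infty=\sqrt{\mu(v_\perp)/n}\,\|v_\perp\|$, so each summand obeys $\|\xi_j\|\le b:=\tfrac{\|v_\perp\|}{n}\sqrt{\mu(v_\perp)\mu_0 d}$. Viewing $\|Y\|$ as a function of the independent draws $\omega_1,\dots,\omega_m$, replacing a single $\omega_j$ changes $\|Y\|$ by at most $2b$, so McDiarmid's inequality gives $\mathbb{P}\big(\|Y\|\ge\mathbb{E}\|Y\|+t\big)\le\exp\!\big(-t^2/(2mb^2)\big)$. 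Equating the right-hand side to $\delta$ yields the deviation $t=b\sqrt{2m\log(1/\delta)}$, and a direct computation (using $\tfrac{d\mu_0}{d\mu_0+m\sin^2\phi_d}\,(m^2\sin^2\phi_d+md\mu_0)=md\mu_0$) shows $t^2=\beta_2^2\,(M^2+V)$ with $\beta_2$ exactly as stated. Adding $\mathbb{E}\|Y\|\le\sqrt{M^2+V}$ to this deviation produces the claimed $(1+\beta_2)\sqrt{M^2+V}$ bound with probability at least $1-\delta$. As a sanity check, the same computation with $M=0$ reproduces $\beta_1=\sqrt{2\mu(v_\perp)\log(1/\delta)}$ of Lemma \ref{lem:U_v}.

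The main subtlety — the only genuine departure from the orthogonal case of Lemma \ref{lem:U_v} — is that $\mathbb{E}[Y]\neq 0$, so the bound must combine the mean $M$ and the fluctuation $V$ \emph{under a single square root} rather than additively. The key realization is to bound $\mathbb{E}\|Y\|$ by $\sqrt{M^2+V}$ through the second moment and then concentrate $\|Y\|$ around $\mathbb{E}\|Y\|$ by a bounded-difference argument, rather than splitting mean and fluctuation by the triangle inequality; the weight $\tfrac{d\mu_0}{d\mu_0+m\sin^2\phi_d}$ inside $\beta_2$ is precisely the ratio $V/(M^2+V)$ that appears when the McDiarmid deviation $b\sqrt{2m\log(1/\delta)}$ is re-expressed as a multiple of $\sqrt{M^2+V}$. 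Establishing $\|\bar U^T v_\perp\|\le\sin\phi_d\|v_\perp\|$ from Lemma \ref{lem:subsprelate} (if not simply cited) is the one remaining routine ingredient.
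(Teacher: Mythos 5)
Your proposal is correct and follows essentially the same route as the paper's proof: decompose $\bar U_\Omega^T v_{\perp,\Omega}$ into a sum of i.i.d.\ bounded random vectors, bound $\mathbb{E}\|Y\|$ via the second moment (your mean-plus-variance split is arithmetically identical to the paper's diagonal-plus-cross-term computation) together with Jensen's inequality and the principal-angle bound $\|\bar U^T v_\perp\|\leq \sin\phi_d\|v_\perp\|$, and then apply McDiarmid's inequality with bounded differences $2\|v_\perp\|_\infty\sqrt{d\mu_0/n}$, identifying the deviation with $\beta_2\sqrt{M^2+V}$. No gaps; this matches the paper's argument step for step.
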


\begin{proof}[{Proof of Lemma \ref{lem:delta_miss}}]
Note that $\left\lvert\Delta\right\lvert = \left\|\Delta\right\|$, for which we have the following,
\begin{align}
  \left\|\Delta\right\| &= \left\|w_2^T(\bar U^T U)^{-1}\bar U^T r\right\| \nn \\
  &= \left\|v_{\perp,\Omega}^TU_\Omega\left(U_\Omega^TU_\Omega\right)^{-1}\left(\bar U^T U\right)^{-1}\bar U_{\Omega}^T \left(I - \mathcal{P}_{U_\Omega}\right) v_{\perp, \Omega}\right\| \nonumber \\
  &\leq \left\|v_{\perp,\Omega}^TU_\Omega\right\| \left\|\left(U_\Omega^TU_\Omega\right)^{-1}\right\| \left\|\left(\bar U^T U\right)^{-1}\right\| \left\|\bar U_{\Omega}^T \left(I - \mathcal{P}_{U_\Omega}\right) v_{\perp, \Omega}\right\| \nn \\
  &\overset{\vartheta_1}\leq \frac{1}{\cos\phi_{d}}\left\|v_{\perp,\Omega}^TU_\Omega\right\| \left\|\left(U_\Omega^TU_\Omega\right)^{-1}\right\|\left\|\bar U_{\Omega}^T v_{\perp, \Omega}\right\| \nn \\
  &\leq \frac{1}{\cos\phi_{d}}(\beta_1 + 1)\sqrt{\frac{m}{n}\frac{d\mu(U)}{n}}(1 + \beta_2)\sqrt{\frac{m}{n}\frac{d\mu_0}{n}}\sqrt{\frac{m\sin^2\phi_d}{d\mu_0}+1}\frac{n}{m(1-\gamma_1)}\|v_\perp\|^2 \nn \\
  &\overset{\vartheta_2}\leq \frac{(1+\beta_1)(1+\beta_2)}{(1-\gamma_1)\cos\phi_{d}}\sqrt{\frac{m\sin^2\phi_d}{d\mu_0}+1}\sqrt{\frac{d\mu_0}{n}}\sqrt{\frac{d\mu(U)}{n}}\|v_{\perp}\|^2 \nn
\end{align}
  where $\vartheta_1$ holds since from the following: $$\left\|\bar U_{\Omega}^T \left(I - \mathcal{P}_{U_\Omega}\right) v_{\perp, \Omega}\right\| \leq \left\|\bar U_{\Omega}^T v_{\perp, \Omega}\right\|\;, \quad\quad \left\|\left(U_\Omega^TU_\Omega\right)^{-1}\right\| \leq \frac{1}{\cos\phi_{d}}$$
  and $\vartheta_2$ follows by putting Lemmas \ref{lem:U_v}, \ref{lem:inv_Uomega} and \ref{lem:Ubar_v} together.
\end{proof}

We also prove Lemma \ref{lem:uincoh} for completeness. Before that we first call out the following lemma, the proof of which can be found in \cite{balzano2014local}. 
\begin{lemma}\cite{balzano2014local}
There exists an orthogonal matrix $V\in \R^{d\times d}$ such that 
\begin{equation}
  \sum_{k = 1}^{d}\sin^2\phi_{k} \leq \left\|\bar U V - U\right\|_F^2 \leq 2\sum_{k = 1}^{d}\sin^2\phi_{k} \nn 
 \end{equation} 
  \label{lem:Udist}
\end{lemma}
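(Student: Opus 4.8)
The plan is to reduce the Frobenius distance to a trace and then read off the answer from the singular value structure of $\bar U^T U$. First I would expand, using that $\bar U$ and $U$ have orthonormal columns and $V$ is orthogonal, so that $\bar U^T\bar U = U^TU = V^TV = \mathbb{I}_d$:
\begin{equation}
  \left\|\bar U V - U\right\|_F^2 = \trace\left(V^T\bar U^T\bar U V\right) + \trace\left(U^TU\right) - 2\trace\left(V^T\bar U^TU\right) = 2d - 2\trace\left(V^T\bar U^TU\right)\;. \nn
\end{equation}
Thus it suffices to exhibit a single orthogonal $V$ for which $\trace(V^T\bar U^TU)$ takes a convenient value.

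Next I would invoke Lemma \ref{lem:subsprelate} to obtain a singular value decomposition of $\bar U^TU$ for free. With the unitary matrices $\bar Y, Y$ of that lemma one computes $\bar U^TU = \bar Y\,\Gamma\,Y^T$, where $\Gamma = \diag(\cos\phi_1,\dots,\cos\phi_d)$ has nonnegative diagonal entries; this is precisely an SVD of $\bar U^TU$. Choosing the (orthogonal) Procrustes minimizer $V = \bar Y Y^T$, a one-line computation using $\bar Y^T\bar Y = Y^TY = \mathbb{I}_d$ gives $\trace(V^T\bar U^TU) = \trace(Y\bar Y^T\bar Y\Gamma Y^T) = \trace(\Gamma) = \sum_{k=1}^d\cos\phi_k$. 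Hence for this particular $V$,
\begin{equation}
  \left\|\bar U V - U\right\|_F^2 = 2d - 2\sum_{k=1}^d\cos\phi_k = 2\sum_{k=1}^d\left(1 - \cos\phi_k\right)\;. \nn
\end{equation}

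Finally I would close the argument with an elementary termwise bound. Writing $c = \cos\phi_k\in[0,1]$ (each principal angle lies in $[0,\pi/2]$), the factorization $\sin^2\phi_k = 1 - c^2 = (1-c)(1+c)$ combined with $0 \leq c \leq 1$ yields $\sin^2\phi_k \leq 2(1-c) \leq 2\sin^2\phi_k$; summing over $k$ and inserting the displayed identity gives the claimed sandwich. I do not anticipate a genuine obstacle here: the only point requiring care is selecting the correct orthogonal $V$, and once the SVD $\bar U^TU = \bar Y\Gamma Y^T$ is supplied by Lemma \ref{lem:subsprelate}, the choice $V = \bar Y Y^T$ is forced and everything else is the routine scalar inequality above.
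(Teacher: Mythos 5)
Your proof is correct, and it is essentially the standard argument: the paper itself does not prove this lemma (it defers to \cite{balzano2014local}), and your route --- expand $\|\bar U V - U\|_F^2 = 2d - 2\trace(V^T\bar U^T U)$, read the SVD $\bar U^T U = \bar Y \Gamma Y^T$ off Lemma \ref{lem:subsprelate}, take the Procrustes alignment $V = \bar Y Y^T$ to get $2\sum_{k}(1-\cos\phi_k)$, and finish with the termwise bound $\sin^2\phi_k \leq 2(1-\cos\phi_k) \leq 2\sin^2\phi_k$ valid since $\cos\phi_k \in [0,1]$ --- is exactly the argument underlying the cited reference. All steps check out, including the two elementary inequalities, which reduce to $-(1-\cos\phi_k)^2 \leq 0$ and $\cos\phi_k(1-\cos\phi_k) \geq 0$ respectively.
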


\begin{proof}[{Proof of Lemma \ref{lem:uincoh}}]
    According to Lemma \ref{lem:Udist} we have 
    \begin{align}
      \left\|U_{i}\right\|_2 \leq \left\|\bar U_{i}\right\|_2 + \left\|\bar U_{i} V - U_{i}\right\|_2 &\leq \left\|\bar U_{i}\right\| + \sqrt{2\sum_{k = 1}^{d}\sin^2\phi_k} \nn \\
      &\leq \left(1 + \frac{1}{2\sqrt{2}}\right)\sqrt{\frac{d\mu_0}{n}} \nn 
    \end{align}
    It hence follows that $\left\|U_{i}\right\|_2^2 \leq 2\frac{d \mu_0}{n}$.
\end{proof}

We need the following lemma and McDiarmid's inequality to prove Lemma \ref{lem:Ubar_vperp}.
\begin{lemma}
  $\left\|\bar U^T v_\perp\right\|^2 \leq \sin^2({\phi_d})\|v_\perp\|^2$, where $\phi_d$ denotes the largest principal angle between $R(\bar U)$ and $R(U)$.
  \label{lem:Ubar_vperp}
\end{lemma}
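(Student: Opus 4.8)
The plan is to exploit the fact that $v_\perp$ is by construction orthogonal to $R(U)$, so that it is fixed by the residual projector $\mathbb{I}_n - UU^T$, and then reduce the claim to a spectral-norm computation governed by the principal angles. First I would observe that since $v_\perp = v - UU^T v$ satisfies $\left(\mathbb{I}_n - UU^T\right)v_\perp = v_\perp$, we may insert this idempotent factor for free and write
\begin{equation}
  \bar U^T v_\perp = \bar U^T\left(\mathbb{I}_n - UU^T\right)v_\perp \;, \nn
\end{equation}
so that, by submultiplicativity of the spectral norm,
\begin{equation}
  \left\|\bar U^T v_\perp\right\| \leq \left\|\bar U^T\left(\mathbb{I}_n - UU^T\right)\right\|_2 \left\|v_\perp\right\| \;. \nn
\end{equation}

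It then remains to identify the operator norm on the right with $\sin\phi_d$. Since $\mathbb{I}_n - UU^T$ is a symmetric idempotent, I would compute
\begin{equation}
  \left\|\bar U^T\left(\mathbb{I}_n - UU^T\right)\right\|_2^2 = \lambda_{\max}\!\left(\bar U^T\left(\mathbb{I}_n - UU^T\right)\bar U\right) = \lambda_{\max}\!\left(\mathbb{I}_d - \bar U^T U U^T \bar U\right) \;. \nn
\end{equation}
By Definition \ref{defn:detdiscrepancy} the eigenvalues of $\bar U^T U U^T \bar U$ are exactly $\cos^2\phi_1, \dots, \cos^2\phi_d$ (the squared singular values of $\bar U^T U$), hence the eigenvalues of $\mathbb{I}_d - \bar U^T U U^T\bar U$ are $\sin^2\phi_1, \dots, \sin^2\phi_d$. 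Because $\phi_d$ is the largest principal angle, the maximal eigenvalue is $\sin^2\phi_d$, giving $\|\bar U^T(\mathbb{I}_n - UU^T)\|_2 = \sin\phi_d$. Substituting this back yields $\|\bar U^T v_\perp\|^2 \leq \sin^2\phi_d\,\|v_\perp\|^2$, as claimed.

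There is essentially no hard step here; the only point requiring care is the insertion of the idempotent factor, which is precisely what sharpens the spectral-norm bound, since the naive estimate $\|\bar U^T v_\perp\| \leq \|\bar U^T\|_2\|v_\perp\| = \|v_\perp\|$ would only yield the trivial constant $1$. As an alternative that sidesteps the operator-norm identity, I could instead pass to the coordinate frame of the CS decomposition in Lemma \ref{lem:subsprelate}: setting $\tilde v_\perp = Q v_\perp$, the orthogonality $\tilde v_\perp \perp R(QU)$ forces $\cos\phi_k\,(\tilde v_\perp)_k = -\sin\phi_k\,(\tilde v_\perp)_{d+k}$ for each $k$, whence $(\tilde v_\perp)_k^2 = \sin^2\phi_k\bigl[(\tilde v_\perp)_k^2 + (\tilde v_\perp)_{d+k}^2\bigr] \leq \sin^2\phi_d\bigl[(\tilde v_\perp)_k^2 + (\tilde v_\perp)_{d+k}^2\bigr]$; summing over $k$ and using $\|\bar U^T v_\perp\|^2 = \sum_{k=1}^d (\tilde v_\perp)_k^2 \leq \|\tilde v_\perp\|^2 = \|v_\perp\|^2$ recovers the identical bound.
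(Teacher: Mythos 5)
Your proposal is correct, and your primary argument is genuinely different from the paper's. The paper exploits the model structure $v = \bar U s$ (so that $v_\perp = (\mathbb{I}_n - UU^T)\bar U s$) and then invokes the CS-decomposition of Lemma \ref{lem:subsprelate} to write both sides as quadratic forms in $s$: it shows $\|\bar U^T v_\perp\|^2 = s^T\bar Y\Sigma^4\bar Y^T s$ and $\|v_\perp\|^2 = s^T\bar Y\Sigma^2\bar Y^T s$, and then compares entrywise via $\sin^4\phi_k \leq \sin^2\phi_d\,\sin^2\phi_k$. Your argument never uses the fact that $v$ lies in $R(\bar U)$: it relies only on $v_\perp \perp R(U)$, inserting the idempotent projector and reducing everything to the operator-norm identity $\|\bar U^T(\mathbb{I}_n - UU^T)\|_2 = \sin\phi_d$, which you correctly derive from the singular values of $\bar U^T U$. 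This buys strictly more generality --- your bound holds for \emph{any} vector orthogonal to $R(U)$, independent of Assumption \ref{cond:v} or the data model --- and it is arguably cleaner, whereas the paper's route stays inside the CS-decomposition machinery it reuses throughout Section \ref{sec:appendix_a} and leans on the representation $v = \bar U s$ that is already in play. Your alternative coordinate-frame argument is closer in spirit to the paper's (it works in the frame of Lemma \ref{lem:subsprelate}), but even there you use only the orthogonality relations $\cos\phi_k\,(\tilde v_\perp)_k = -\sin\phi_k\,(\tilde v_\perp)_{d+k}$ rather than membership of $v$ in $R(\bar U)$, so it too proves the more general statement. Both routes yield the identical constant $\sin^2\phi_d$, so nothing is lost either way.
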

\begin{proof}
According to the definition of $v_\perp$ and Lemma \ref{lem:subsprelate}, we have 
  \begin{align}
  \left\|\bar U^T y\right\|^2 = \left\|\bar U^T\left(\mathbb{I} - UU^T\right)\bar U s\right\|^2 
  &= s^T\bar Y\Sigma^4\bar Y^4 s \nn \\
  &\overset{\vartheta_3}\leq \sin^2\phi_{d} s^T\bar Y\Sigma^2\bar Y^T s = \sin^2\phi_{d}\|v_{\perp}\|^2 \nn 
\end{align}
here $\bar Y$ and $\Sigma$ are the same as those defined in Lemma \ref{lem:subsprelate}, and the last equality holds since $\|v_{\perp}\|^2 = \|s\|^2 - v^TUU^Tv = s^T\bar Y\Sigma^2\bar Y^T s$. 
\end{proof}
\begin{theorem}{(McDiarmid's Inequality \cite{mcdiarmid1989method}).} Let $X_1,\dots,X_n$ be independent random variables, and assume $f$ is a function for which there exist $t_i$, $i = 1,\dots,n$ satisfying 
\begin{align}
  \sup_{x_1,\dots,x_n,\widehat x_i}\left\lvert f(x_1,\dots,x_n) - f(x_1,\dots,\widehat x_i, \dots, x_n)\right\lvert \leq t_i \nn
\end{align}
where $\widehat x_i$ indicates replacing the sample value $x_i$ with any other of its possible values. Call $f(X_1,\dots,X_n) := Y$. Then for any $\epsilon > 0$, 
\begin{align}
  &\mathbb{P}\left[Y \geq \mathbb{E}Y + \epsilon\right] \leq \exp\left(- \frac{2\epsilon^2}{\sum_{i = 1}^{n}t_i^2}\right) \nn \\
  &\mathbb{P}\left[Y \leq \mathbb{E}Y - \epsilon\right] \leq \exp\left(- \frac{2\epsilon^2}{\sum_{i = 1}^{n}t_i^2}\right) \nn
\end{align} 
\label{thm:McDiarmid}
\end{theorem}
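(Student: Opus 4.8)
The plan is to prove McDiarmid's inequality by the method of bounded differences: I would build the Doob martingale associated with $Y = f(X_1,\dots,X_n)$, control the conditional range of each martingale increment using the bounded-differences hypothesis together with independence, and then close with a Chernoff bound driven by Hoeffding's lemma. Concretely, I would set $V_i = \mathbb{E}[Y \mid X_1,\dots,X_i]$ for $i = 0,\dots,n$, so that $V_0 = \mathbb{E}[Y]$ and $V_n = Y$, and define the increments $D_i = V_i - V_{i-1}$. This gives the telescoping decomposition $Y - \mathbb{E}[Y] = \sum_{i=1}^n D_i$ with $\mathbb{E}[D_i \mid X_1,\dots,X_{i-1}] = 0$, reducing the problem to a deviation bound for a sum of bounded martingale differences.

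The key step is to bound the conditional range of each $D_i$. I would introduce $L_i = \inf_x \mathbb{E}[Y \mid X_1,\dots,X_{i-1}, X_i = x] - V_{i-1}$ and the analogous supremum $U_i$, observe that $L_i \le D_i \le U_i$, and argue $U_i - L_i \le t_i$. This is where independence is indispensable: for any two candidate values $x, x'$ of $X_i$, the difference $\mathbb{E}[f \mid \cdots, X_i = x] - \mathbb{E}[f \mid \cdots, X_i = x']$ equals the expectation over the later independent coordinates $X_{i+1},\dots,X_n$ of $f(\cdots,x,\cdots) - f(\cdots,x',\cdots)$, whose integrand is bounded in absolute value by $t_i$ by hypothesis. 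Because $X_i$ is independent of the remaining coordinates, the conditional law over which I integrate does not depend on the frozen value of $X_i$, so the pointwise bound $t_i$ survives the integration.

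With the conditional range in hand, I would run a Chernoff argument. For $s > 0$, write $\mathbb{P}[Y - \mathbb{E}Y \ge \epsilon] \le e^{-s\epsilon}\,\mathbb{E}[e^{s\sum_i D_i}]$, then peel off increments from $i = n$ down to $i = 1$ via the tower property: conditional on $X_1,\dots,X_{i-1}$, the increment $D_i$ is zero-mean and supported in an interval of length at most $t_i$, so Hoeffding's lemma gives $\mathbb{E}[e^{sD_i} \mid X_1,\dots,X_{i-1}] \le e^{s^2 t_i^2/8}$. Iterating yields $\mathbb{E}[e^{s\sum_i D_i}] \le \exp(s^2\sum_i t_i^2/8)$, hence $\mathbb{P}[Y - \mathbb{E}Y \ge \epsilon] \le \exp(-s\epsilon + s^2\sum_i t_i^2/8)$. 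Optimizing over $s$ at $s = 4\epsilon/\sum_i t_i^2$ delivers the stated upper-tail bound $\exp(-2\epsilon^2/\sum_i t_i^2)$, and the lower tail follows by applying the identical argument to $-f$, which has the same difference constants $t_i$.

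The main obstacle I anticipate is the conditional range bound $U_i - L_i \le t_i$: it is tempting but wrong to read it off the difference condition on $f$ without invoking independence, and the whole point is that the law of the unrevealed coordinates must be independent of $X_i$ so that the pointwise bound $t_i$ passes through the conditional expectation unchanged. Once that step is secured, the remaining ingredients — Hoeffding's lemma and the Chernoff optimization — are entirely standard and require only routine calculation.
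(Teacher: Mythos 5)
Your proposal is correct: the Doob martingale decomposition, the conditional range bound $U_i - L_i \le t_i$ (where you rightly flag independence as the essential ingredient), Hoeffding's lemma applied conditionally, and the Chernoff optimization at $s = 4\epsilon/\sum_i t_i^2$ together give exactly the stated two-sided bound. Note that the paper does not prove this statement at all—it imports it by citation to McDiarmid (1989)—and your argument is precisely the classical method-of-bounded-differences proof found in that reference, so there is nothing to reconcile between your route and the paper's.
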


\begin{proof}[{Proof of Lemma \ref{lem:Ubar_v}}]
\sloppy{We use McDiarmid's inequality to prove this. For the simplicity of notation denote $v_\perp$ as $y$. Let $X_i = \bar U_{\Omega(i)} y_{\Omega(i)}\in \R^{d}$, and $f(X_1,\dots,X_m) = \left\|\sum_{i = 1}^{m}X_i\right\|_2 = \left\|\bar U_{\Omega}^T v_{\perp,\Omega}\right\|_2$, then $\left\lvert f(x_1, \dots, x_n) - f(x_1, \dots,\widehat x_i, \dots x_n\right\lvert$ can be bounded via}
\begin{align}
  \left\lvert \left\|\sum_{i = 1}^{m}X_i\right\|_2 - \left\|\sum_{i \neq k}^{m}X_i + \widehat X_k\right\|_2\right\lvert \leq \left\|X_k - \widehat X_k\right\|_2 &\leq \|X_k\|_2 + \|\widehat X_k\|_2 \nn\\
  &\leq 2\|y\|_{\infty} \sqrt{d\mu_0/n}
  \label{eq:Mcdiarmid_abs}
\end{align}

We next calculate $\mathbb{E}\left[f(X_1,\dots,X_m)\right] = \mathbb{E}\left[\left\|\sum_{i = 1}^{m}X_i\right\|_2\right]$. Note that 
\begin{equation}
  \mathbb{E}\left[\left\|\sum_{i = 1}^{m}X_i\right\|^2\right] 
  = \mathbb{E}\left[\sum_{i=1}^{m}\|X_i\|^2 + \sum_{i = 1}^m\sum_{j\neq i}X_i^T X_j\right] 
  \label{eq:McdiarMid_exp_whole}
\end{equation}
Recall that we assume the samples are taken uniformly with replacement. This together with the fact that $\left\|\bar U_{i}\right\|_2 = \|\mathcal{P}_{R(\bar U)}(e_i)\| \leq \sqrt{d \mu_0 / n}$ yield the following 
\begin{align}
  \mathbb{E}\left[\sum_{i=1}^{m}\|X_i\|^2\right] &= \sum_{i=1}^{m}\mathbb{E}\left[\left\|U_{\Omega(i)}y_{\Omega_{(i)}}\right\|^2\right]\nn \\
  &=\sum_{i = 1}^{m}\sum_{k = 1}^{n}\|\bar U_{k}\|^2 y_{k}^2 \mathbb{P}_{\{\Omega(i) = k\}}\leq\frac{m}{n}\frac{d\mu_0}{n}\|y\|^2 
  \label{eq:McdiarMid_exp_squa}
\end{align}
\begin{align}
  \mathbb{E}\left[\sum_{i = 1}^m\sum_{j\neq i}X_i^T X_j\right] &= \sum_{i=1}^{m}\sum_{j\neq i}\sum_{k_1=1}^{n}\sum_{k_2=1}^{n}y_{k_1}\bar U_{k_1}^T\bar U_{k_2}y_{k_2}\mathbb{P}(\Omega_{j}=k_2)\mathbb{P}(\Omega_{i}=k_1) \nn \\
  &= \frac{m^2 - m}{n^2} \|\bar U^T y\|^2 \leq \frac{m^2}{n^2}\sin^2\phi_d \|y\|^2 
  \label{eq:McdiarMid_exp_cross}
\end{align}
where the last inequality holds by Lemma \ref{lem:Ubar_vperp}.

Eqs (\ref{eq:McdiarMid_exp_whole})  (\ref{eq:McdiarMid_exp_squa}) and (\ref{eq:McdiarMid_exp_cross}) together with the Jensen's inequality imply 
\begin{equation}
  \mathbb{E}\left[\left\|\sum_{i = 1}^{m}X_i\right\|\right] \leq \sqrt{\frac{m}{n}}\sqrt{\frac{m}{n}\sin^2\phi_d + \frac{d\mu_0}{n}}\|y\| = \sqrt{\frac{m}{n}\frac{d\mu_0}{n}}\sqrt{\frac{m\sin^2\phi_d}{d\mu_0}+1}\|y\| 
  \label{eq:mcd_exp_bnd}
\end{equation} 
Let $\epsilon = \beta_2\sqrt{\frac{m}{n}\frac{d\mu_0}{n}}\sqrt{\frac{m\sin^2\phi_d}{d\mu_0}+1}\|y\|$, then (\ref{eq:Mcdiarmid_abs}) and (\ref{eq:mcd_exp_bnd}) together with Theorem \ref{thm:McDiarmid} give
\begin{align}
  &\mathbb{P}\left[\left\|U_\Omega y_\Omega\right\| \geq (1 + \beta_2)\sqrt{\frac{m}{n}\frac{d\mu_0}{n}}\sqrt{\frac{m\sin^2\phi_d}{d\mu_0}+1}\|y\| \right] \nn \\
  &\qquad \qquad\leq \exp\left(\frac{-2\beta_2^2\frac{m}{n}\frac{d\mu_0}{n}\left(\frac{m\sin^2\phi_d}{d\mu_0}+1\right)\|y\|^2}{4m\|y\|_{\infty}^2 \frac{d\mu_0}{n}}\right) \nn \\
  &\qquad \qquad= \exp\left(\frac{-\beta_2^2\left(\frac{m\sin^2\phi_d}{d\mu_0}+1\right)\left\|y\right\|^2}{2n\left\|y\right\|_\infty^2}\right) = \delta
\end{align}
where the last inequality follows by submitting our definition of $\mu(y)$ Eq (\ref{defn:vec_incoh}) and $\beta_2$. 
\end{proof}

\bibliographystyle{plain}
\bibliography{IEEEgrouse}

\end{document}